\documentclass[11pt,reqno]{amsart}
\usepackage[margin=1in]{geometry}
\usepackage{algorithm,algpseudocode}
\usepackage[fleqn,tbtags]{mathtools}
\usepackage{pifont,enumerate}
\usepackage{verbatim}
\usepackage{tikz}
\usepackage{filecontents}
\usepackage{placeins}
\usepackage{mlmodern}
\usepackage{extarrows}
\usepackage{graphicx}
\usepackage{calc}
\usepackage{adjustbox}
\usepackage{amssymb}
\usepackage{tabu}
\usepackage{cases}
\usepackage{subcaption}
\usepackage{bbm, dsfont, mathrsfs}
\usepackage[normalem]{ulem} 
\usepackage{hyperref,xcolor}
\usepackage{makecell}
\usepackage[shortlabels]{enumitem}
\hypersetup{
    colorlinks,
    linkcolor={red!50!black},
    citecolor={blue!50!black},
    urlcolor={blue!80!black}
}

\DeclarePairedDelimiterX\innerp[2]{\langle}{\rangle}{#1,#2}

\newcommand\restr[2]{{
  \left.\kern-\nulldelimiterspace 
  #1 
  \vphantom{\big|} 
  \right|_{#2} 
  }}

\newcommand{\tp}{{\scriptscriptstyle\mathsf{T}}}

\DeclareMathOperator{\rank}{rank}
\DeclareMathOperator{\tr}{tr}

\DeclareMathOperator*{\argmin}{argmin}

\DeclareMathOperator{\supp}{supp}
\newcommand{\E}{\mathbb{E}}
\DeclareMathOperator{\Var}{Var}

\theoremstyle{definition}
\newtheorem{theorem}{Theorem}[section]

\newtheorem{lemma}[theorem]{Lemma}
\newtheorem{corollary}[theorem]{Corollary}
\newtheorem{proposition}[theorem]{Proposition}
\newtheorem{example}[theorem]{Example}
\newtheorem{remark}[theorem]{Remark}
\numberwithin{equation}{section}

\newcommand{\1}{\mathbbm{1}}

\newcommand{\C}{\mathbb{C}}

\newcommand{\N}{\mathbb{N}}
\newcommand{\Z}{\mathbb{Z}}
\newcommand{\R}{\mathbb{R}}

\newcommand{\G}{\mathsf{G}}
\newcommand{\g}{\mathfrak{g}}

\newcommand*{\method}[1]{#1}
\newcommand*{\overeqU}[1]{\ensuremath{\mathrel{\overset{\method{#1}}{=}}}}
\newcommand*{\overeq}[1]{\mathrel{\overset{\method{#1}}{\resizebox{\widthof{\kern1.25pt\overeqU{\method{#1}}}}{\heightof{$=$}}{$=$}}}}

\newcommand{\range}{\mathcal{R}}
\newcommand{\dR}{\mathsf{dR}}
\renewcommand{\O}{\mathsf{O}}
\newcommand{\U}{\mathsf{U}}

\setlist[description]{font=\normalfont\itshape,labelindent=2em}

\allowdisplaybreaks

\begin{document}

\title{Singular value decomposition of unbounded operators}

\author{Rongbiao Thomas Wang}
\address{Computational and Applied Mathematics, University of Chicago, Chicago, IL 60637}
\email{rbwang@uchicago.edu}

\author{Haoming Wang}
\address{Department of Statistics, Columbia University, New York, NY 10027}
\email{hw3129@columbia.edu}

\author{Lek-Heng Lim}
\address{Computational and Applied Mathematics Initiative, University of Chicago, Chicago, IL 60637}
\email{lekheng@uchicago.edu}

\begin{abstract}
The singular value decomposition has been established for matrices, Hilbert--Schmidt operators, trace-class operator, compact operators, and bounded operators, but surprisingly not for unbounded operators. Unfortunately, most interesting operators in applied math are unbounded, as any operators involving some form of derivatives —-- gradient, exterior derivatives, Laplacians, Fourier and other transforms of derivatives, Hamiltonians, etc. --— are likely unbounded. In this article, we fill in this last missing piece by establishing the existence of singular value decompositions for unbounded operators in three natural forms: multiplication-operator, direct-integral, and operator-valued-measure. We show it inherits classical properties of finite-dimensional singular value decomposition including approximation results, relationships with fundamental subspaces, and the Moore--Penrose inverse. This discovery opens the door
to the singular value decompositions of a myriad of well-known unbounded operators in mathematics, physics, statistics, and finnance --- gradients on Euclidean spaces and manifolds, Petrov--Galerkin method, finite-difference operators, Hilbert--Schmidt operators, Hilbert complexes, supersymmetric quantum mechanics, Sturm--Liouville theory, nonparametric density estimation, and the Black--Scholes equation. The resulting decompositions reveal a number of novel insights, among many others: bosonic and fermionic states in supersymmetric quantum mechanics arise as left and right singular vectors of generalized ladder operators; the Riesz transform appears as the left singular operator of the Euclidean gradient; and the Hodge decomposition follows directly from the singular value decompositions of the exterior derivatives.
\end{abstract}

\subjclass{47A65; 47A70; 47A68; 47N30; 47N50}

\keywords{singular value decomposition, unbounded operators, Moore--Penrose inverse, Hilbert complex, special functions}

\maketitle

\section{Introduction}\label{sec:intro}

The singular value decomposition is one of the fundamental tools of applied mathematics. Over the past century and a half, its development has progressed through increasingly general classes of operators. The first pioneers were Beltrami, Jordan, and Sylvester in the late nineteenth century \cite{Beltrami1873,Jordan1874,Sylvester1889a,Sylvester1889b,Sylvester1889c}, who studied the reduction of bilinear forms by orthogonal substitutions. At the beginning of the twentieth century, interest in singular value decomposition for infinite-dimensional operators grew out of the theory of integral equations and integral operators \cite{Fredholm1903,Hilbert1904}, culminating in Schmidt's singular value expansion for Hilbert--Schmidt operators \cite{Schmidt1907}. This development led to the singular value decomposition of compact operators, sometimes known as the Schmidt expansion \cite{GK1969}, and subsequently to operator classes defined through their singular values, such as trace-class operators, whose singular values are summable \cite{vonNeumannSchatten1946,vonNeumannSchatten1948,Grothendieck1955,Schatten1960,Lidskii1959}. The next natural step was to pass from compact to general bounded operators, where the singular values need no longer be discrete. Two principal approaches emerged to address this difficulty: operator ideals and $s$-numbers \cite{Pietsch1980}, and measure-theoretic singular value expansions \cite{CraneGockenbach2020}.

The natural next step is to pass from bounded to unbounded operators. Yet, to the best of our knowledge, a comparably general theory of singular value decomposition for unbounded operators has remained missing beyond special cases \cite{KR2025}. This gap is particularly important because many of the fundamental operators arising in applied mathematics are naturally unbounded. Differential operators, including gradients, Laplacians, Sturm--Liouville operators, Hamiltonians in quantum mechanics, and operators governing partial differential equations, inverse problems, and mathematical finance, are among the most basic examples. In this article, we fill in this gap. 

Taking the measure-theoretic approach in the same spirit as \cite{CraneGockenbach2020}, we show that, analogous to the spectral decomposition, an unbounded operator $A: \mathcal{D}(A) \subseteq \mathcal{H} \to \mathcal{K}$ has its singular value decomposition in three different forms:
\begin{description}
\item [Multiplication operator] There exists a measurable space $(M,\mu)$ such that $A = U T_\sigma V^*$ where $U$ and $V$ are partial isometries from $L^2_\mu$ to $\mathcal{K}$ and $\mathcal{H}$ respectively and $T_\sigma$ is the multiplication operator by $\sigma \geq 0$ almost everywhere. 
\item [Direct integral] There exists a direct integral $\int_{\mathsf{\Sigma}(A)}^\oplus \mathcal{H}_s \, d\eta(s)$ over the set of singular values of $A$ such that $A = U \Sigma V^*$ where $U$ and $V$ are partial isometries from the direct integral to $\mathcal{K}$ and $\mathcal{H}$ and $\Sigma$ is a multiplication operator of measurable sections.
\item [Operator-valued measure] There exists an operator-valued measure $\nu$ such that $A$ decomposes as $A = \int_{[0,\infty)} \lambda^{1/2}  \, d\nu(\lambda)$.
\end{description}
Moreover, the singular value decomposition of unbounded operators retains many familiar properties of the matrix singular value decomposition. In Section~\ref{sec:properties}, we show that it admits truncation approximations in the spirit of the Eckart--Young theorem \cite{Eck}, is preserved under taking adjoints, recovers the fundamental subspaces via orthogonal projections, and yields the Moore--Penrose inverse.

The significance of the extension becomes apparent when it is applied to familiar unbounded operators arising in analysis, numerical computation, physics, statistics, and mathematical finance. These decompositions not only illustrate the general theory, but also uncover a spectrum of novel insights:
\begin{enumerate}[\normalfont(i)]
\item On $\R^n$, the set of singular values of the gradient $\nabla$ is entire half-line $[0,\infty)$. The right singular operator is the Fourier transform, while the left singular operator is a composition of the inverse Fourier transform and the Riesz transform.
\item On a bounded domain $\Omega\subseteq\R^n$, the singular values of $\nabla$ become discrete, and the corresponding singular vectors reflect the geometry of the domain.
\item On the unit sphere $S^2$, the spherical harmonics arise naturally as the singular vectors of the gradient $\nabla_{S^2}$.
\item On a compact Lie group $\mathsf{G}\subseteq\mathsf{U}(n)$, the singular value decomposition of the gradient $\nabla_\mathsf{G}$ is determined by the Casimir operator, with matrix coefficients of irreducible representations as right singular vectors and, for class functions, irreducible characters.
\item The singular value decompositions of the differentials in a closed Hilbert complex recover the Hodge decomposition. For the de Rham complex, the singular operators naturally encode the Betti numbers and harmonic forms, while the singular values yield a new formula for the analytic torsion.
\item The right and left singular vectors of a differential operator provide natural subspaces in the Petrov--Galerkin method.
\item For the finite-difference operator $\nabla_h$ of $\nabla$, the singular value decomposition converges to that of $\nabla$: The right singular operator is the same Fourier transform, the left singular operators converge strongly, and the singular value functions converge pointwise.
\item The singular value decomposition of the two-sided operator $X \mapsto AXC^*$ for a Hilbert--Schmidt operator $X$ is given by the singular value decompositions of $A$ and $C$.
\item In supersymmetric quantum mechanics, the Hamiltonian is factorized as $\mathsf H_1=A^*A$, while reversing the factors produces the partner Hamiltonian $\mathsf H_2=AA^*$. Physically, the bosonic and fermionic energy states are exactly the left and right singular vectors of $A$. In particular, they are described by the Hermite functions in the special case of the quantum harmonic oscillator, where $A$ is the annihilation operator. \label{item:ham}
\item Similarly, the Sturm--Liouville operator is factorized into $L = B^*B$, which is closely related to the factorization of Hamiltonian in \ref{item:ham}. Special functions such as Laguerre and Jacobi polynomials arise as singular vectors of $B$.
\item The singular value decomposition of the integral operator over $L^2(0,\infty)$ provides a natural nonparametric density estimator.
\item The risk-free rate is the smallest singular value of the Black--Scholes operator.
\end{enumerate}

\section{Notations and conventions}\label{sec:notations}
We denote the positive integers by $\mathbb{N}$. In this article, we will assume all Hilbert spaces are separable and complex unless otherwise specified. Let $\mathcal{H}$ and $\mathcal{K}$ be Hilbert spaces. We denote by $\mathcal{L}(\mathcal{H},\mathcal{K})$ the Banach space of bounded operators from $\mathcal{H}$ to $\mathcal{K}$ and shorten it to $\mathcal{L}(\mathcal{H})$ if $\mathcal{K} = \mathcal{H}$. The identity operator in $\mathcal{L}(\mathcal{H})$ is denoted by $I_\mathcal{H}$, where the subscript will be dropped if the context is clear.

Our goal is to study the singular value decomposition of a closed and densely defined unbounded operator $A \colon \mathcal{D}(A) \subseteq \mathcal{H} \to \mathcal{K}$, i.e., the domain $\mathcal{D}(A)$ is a dense subspace of $\mathcal{H}$ and if $\varphi_n \in \mathcal{D}(A)$ for all $n \in \mathbb{N}$, $\varphi_n \to \varphi$, and $A\varphi_n \to \psi$, then $\varphi \in \mathcal{D}(A)$ and $A\varphi = \psi$. We maintain the assumption that all operators under consideration are
closed and densely defined throughout the article. Bounded operators are included as the special case $\mathcal D(A)=\mathcal H$. The domain $\mathcal{D}(A)$ is complete with respect to the norm $\lVert \,\cdot \,\rVert_\mathcal{H}$ only if $A$ is a bounded operator. However, $\mathcal{D}(A)$ is always complete with respect to the graph norm
\[ 
\lVert \varphi \rVert_A^2 \coloneqq \lVert \varphi \rVert_\mathcal{H}^2 + \lVert A\varphi  \rVert_\mathcal{K}^2,
\]
so $\mathcal{D}(A)$ is a Hilbert space with the inner product $\langle \varphi , \psi \rangle_A = \langle \varphi, \psi\rangle_\mathcal{H}+\langle A\varphi, A\psi\rangle_\mathcal{K}$ for $\varphi,\psi \in \mathcal{D}(A)$.

We denote the range and kernel of $A$ by $\mathcal{R}(A)$ and $\mathcal{N}(A)$, respectively. When $\mathcal{H}=\mathcal{K}$, we denote the spectrum of $A$ by $\mathsf{\Lambda}(A)$. The adjoint operator $A^*$ is defined by $\langle \psi, A\varphi\rangle_\mathcal{K} = \langle A^*\psi, \varphi \rangle_{\mathcal{H}}$ for all $\varphi \in \mathcal{D}(A)$ with domain
\[
\mathcal{D}(A^*) \coloneqq \{\psi \in \mathcal{K}: \varphi \mapsto \langle \psi, A\varphi \rangle_{\mathcal{K}} \text{ is bounded on } \mathcal{D}(A)\text{ in }\lVert\,\cdot\,\rVert_{\mathcal H}\}.
\]

Let $B: \mathcal{D}(B) \subseteq \mathcal{H} \to \mathcal{K}$ be another unbounded operator. We say $A \subseteq B$ if both $\mathcal{D}(A) \subseteq \mathcal{D}(B)$ and $A\varphi = B\varphi$ for all $\varphi \in \mathcal{D}(A)$. If the set inclusion is a set equality, we write $A = B$. The sum of the two unbounded operators is defined by
\[A+B: \mathcal{D}(A) \cap \mathcal{D}(B) \subseteq \mathcal{H} \to \mathcal{K}, \quad \varphi \mapsto A\varphi + B\varphi.\]

A (linear) isometry is an operator $U \in \mathcal{L}(\mathcal{H},\mathcal{K})$ such that $\langle U\varphi, U\psi \rangle_{\mathcal{K}} = \langle \varphi, \psi \rangle_{\mathcal{H}}$ for all $\varphi, \psi \in \mathcal{H}$. In particular, $U$ is injective and satisfies $U^*U = I_{\mathcal{H}}$. If $U$ is furthermore surjective, it is a unitary operator and satisfies $UU^* = I_\mathcal{K}$.

Let $(M, \mu)$ be a $\sigma$-finite measure space. The multiplication operator for a measurable function $g \colon M \to \mathbb{C}$ is 
\[
T_g: \bigl\{ f \in L^2_\mu  : g f \in L^2_\mu  \bigr\} \subseteq L^2_\mu  \to L^2_\mu , \quad f \mapsto g f.
\]
All multiplication operators are closed and if $g$ is real-valued, $T_g$ is self-adjoint. To avoid confusion with the identity operator, we will denote by $\1_E$ the indicator function of a measurable subset $E \subseteq M$.

\section{Singular value decomposition of unbounded operators}\label{sec:svd}

Analogous to the spectral decomposition for unbounded self-adjoint operators, the singular value decomposition of unbounded operators takes three forms: multiplication operator, direct integral, and operator-valued measure, which we establish in this section. As a reminder, we assume $A \colon \mathcal{D}(A) \subseteq \mathcal{H} \to \mathcal{K}$ is a closed, densely defined linear operator throughout this section. 

\subsection{Singular value decomposition in multiplication operator form}\label{sec:svd-multiply}

We first prove the singular value decomposition in multiplication operator form, which has two versions: condensed and full. This is the most direct generalization of the matrix singular value decomposition, which also admits both versions.

\begin{theorem}[Condensed singular value decomposition]\label{thm:condensed-mult}
There exist a $\sigma$-finite Borel measure space $(M, \mu)$, isometries $U \in \mathcal{L} (L^2_\mu, \mathcal{K})$, $V  \in \mathcal{L}( L^2_\mu, \mathcal{H})$, and a measurable function $\sigma \colon M \to [0, \infty)$ that is positive almost everywhere such that 
\[\mathcal{D}(A) = \{x \in \mathcal{H} : V^*x \in \mathcal{D}(T_\sigma)\} \quad \text{and } Ax = U T_\sigma V^* x \quad \text{for all } x \in \mathcal{D}(A).\]
\end{theorem}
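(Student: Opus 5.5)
The plan is to combine the polar decomposition of a closed densely defined operator with the multiplication-operator form of the spectral theorem for unbounded self-adjoint operators. By von Neumann's theorem, $A^*A$ is self-adjoint and nonnegative, so $|A| \coloneqq (A^*A)^{1/2}$ is well defined. We then have the polar decomposition $A = W|A|$, where $\mathcal{D}(|A|)=\mathcal{D}(A)$ and $\lVert |A|x\rVert = \lVert Ax\rVert$. Here $W\in\mathcal{L}(\mathcal{H},\mathcal{K})$ is a partial isometry with initial space $\overline{\mathcal{R}(|A|)} = \mathcal{N}(A)^\perp$ and final space $\overline{\mathcal{R}(A)}$.

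Next I restrict $|A|$ to the reduced space $\mathcal{H}_0 \coloneqq \mathcal{N}(A)^\perp = \mathcal{N}(|A|)^\perp$. Since $|A|$ is self-adjoint, $\mathcal{H}_0$ reduces $|A|$. The part $|A|_0$ is a nonnegative self-adjoint operator on the separable space $\mathcal{H}_0$, and it is injective. By the spectral theorem in multiplication form, there are a $\sigma$-finite measure space $(M,\mu)$, which can be taken to be a disjoint union of copies of $\mathsf{\Lambda}(|A|_0)\subseteq[0,\infty)$ with finite Borel measures and hence Borel, a unitary $V_0\colon L^2_\mu\to\mathcal{H}_0$, and a measurable $\sigma\ge 0$ such that $|A|_0 = V_0 T_\sigma V_0^*$, with equality of domains. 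Injectivity of $|A|_0$ gives $\mu(\{\sigma=0\})=0$, since $\1_{\{\sigma=0\}}f$ lies in $\mathcal{N}(T_\sigma)$. We may therefore redefine $\sigma$ on a null set to be positive everywhere.

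Now set $V \coloneqq \iota V_0$, where $\iota\colon\mathcal{H}_0\hookrightarrow\mathcal{H}$ is the inclusion. This is an isometry, and $V^* = V_0^* P_{\mathcal{H}_0}$. Set $U \coloneqq WV$. Since $V$ maps into the initial space of $W$, where $W$ is isometric, $U$ is an isometry. For the domain, write $x = x_0 + x_1$ with $x_1\in\mathcal{N}(A)\subseteq\mathcal{D}(A)$. Then $x\in\mathcal{D}(A)$ if and only if $x_0\in\mathcal{D}(|A|_0)$, which holds if and only if $V_0^*x_0 = V^*x \in\mathcal{D}(T_\sigma)$. On this domain,
\[
Ax = W|A|x = W|A|_0x_0 = WV_0T_\sigma V_0^*x_0 = UT_\sigma V^*x.
\]

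The main obstacle is bookkeeping rather than depth. One has to verify carefully that the domain identity holds exactly, not just as an inclusion. This relies on $\mathcal{D}(|A|)=\mathcal{D}(A)$ and on the reduction of $|A|$ by $\mathcal{N}(A)^\perp$, so that $P_{\mathcal{H}_0}$ maps $\mathcal{D}(|A|)$ into itself. One also has to check that the measure space can be chosen Borel and $\sigma$-finite, which follows from separability: countably many cyclic subspaces, each modelled on the spectrum with a finite spectral measure.
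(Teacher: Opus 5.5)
Your proof is correct, but it takes a different route from the paper's. You import the polar decomposition $A=W\lvert A\rvert$ together with the domain identity $\mathcal{D}(\lvert A\rvert)=\mathcal{D}(A)$, $\lVert\lvert A\rvert x\rVert=\lVert Ax\rVert$. You then diagonalize the injective part $\lvert A\rvert_0$ on the reducing subspace $\mathcal{N}(A)^\perp$. Nonnegativity of $\sigma$ comes for free from $\mathsf{\Lambda}(\lvert A\rvert_0)\subseteq[0,\infty)$, $U=WV$ is an isometry at once, and the exact domain identity is inherited from $\mathcal{D}(\lvert A\rvert)=\mathcal{D}(A)$.

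The paper never invokes the polar decomposition. It first treats $\mathcal{N}(A)=\{0\}$ by diagonalizing $A^*A=VT_\theta V^*$, proves $\theta>0$ almost everywhere by hand, and sets $\sigma=\sqrt{\theta}$. It defines $U$ as the isometric extension of $f\mapsto AV(f/\sigma)$ from the dense subspace $\{f: f/\sigma,\ \sigma f\in L^2_\mu\}$; this is exactly your $WV$, since $W$ extends $A\lvert A\rvert^{-1}$. It then shows $V\mathcal{D}(T_\sigma)=\mathcal{D}(A)$ by proving that $V$ is isometric from $(\mathcal{D}(T_\sigma),\lVert\cdot\rVert_{T_\sigma})$ onto $(\mathcal{D}(A),\lVert\cdot\rVert_A)$. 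That step uses closedness of $A$ on the truncations $f\1_{\{\sigma<k\}}$ and graph-norm density of $\mathcal{D}(A^*A)$ in $\mathcal{D}(A)$, and it amounts to re-deriving, in spectral coordinates, the domain identity you quote. The kernel is then removed as you do, by restricting to $\mathcal{N}(A)^\perp$ and regarding $V$ as an isometry into $\mathcal{H}$.

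Your version is shorter and delegates the analysis to standard theorems: von Neumann's theorem, the polar decomposition, and reduction of a self-adjoint operator by its kernel. The paper's version is self-contained apart from the spectral theorem and the core property of $\mathcal{D}(A^*A)$, and it effectively constructs the partial isometry of the polar decomposition rather than assuming it.
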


\begin{theorem}[Full singular value decomposition]\label{thm:full-mult}
There exist measure spaces $(M_1,\mu_1)$ and $(M_2,\mu_2)$ with a common measurable subset $M$ such that $\mu_1\vert_{M} = \mu_2\vert_{M} \eqqcolon \mu$, unitary operators  $U \in \mathcal{L}(L^2_{\mu_2}, \mathcal{K})$, $V \in \mathcal{L}(L^2_{\mu_1}, \mathcal{H})$, and a measurable function $\sigma \colon M \to [0, \infty)$ that is positive almost everywhere such that for $\Sigma \colon \mathcal{D}(\Sigma) \subseteq L^2_{\mu_1} \to L^2_{\mu_2}$ defined by
\[
  \mathcal{D}(\Sigma) = \{f \in L^2_{\mu_1} : \sigma (f\vert_M) \in L^2_\mu\}, \quad
  \Sigma f =
  \begin{cases}
  \sigma (f\vert_M) & \text{on } M, \\
  0 & \text{on } M_2 \setminus M,
  \end{cases}
\]
we have
\[
\mathcal{D}(A) = \{x \in \mathcal{H} : V^*x \in \mathcal{D}(\Sigma)\} \quad \text{and } Ax = U \Sigma V^*x \quad \text{for all } x \in \mathcal{D}(A).
\]
\end{theorem}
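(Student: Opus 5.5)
The plan is to derive the full decomposition from the condensed one in Theorem~\ref{thm:condensed-mult} by adjoining measure spaces that carry the orthogonal complements of the ranges of $U$ and $V$.

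First, apply Theorem~\ref{thm:condensed-mult} to obtain $(M,\mu)$, isometries $U_0\in\mathcal{L}(L^2_\mu,\mathcal{K})$ and $V_0\in\mathcal{L}(L^2_\mu,\mathcal{H})$, and $\sigma>0$ a.e. From the construction, which goes through the polar decomposition $A=W|A|$ and the spectral theorem for $|A|=(A^*A)^{1/2}$ restricted to $\mathcal{N}(A)^\perp$, we have $\range(V_0)=\mathcal{N}(A)^\perp$ and $\range(U_0)=\overline{\range(A)}=\mathcal{N}(A^*)^\perp$. If one does not want to rely on the construction, these identities can be read off directly from the statement. Since $\sigma>0$ a.e., $T_\sigma$ is injective with dense range. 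Hence $x\in\mathcal{N}(A)$ if and only if $V_0^*x=0$, so $\mathcal{N}(A)=\range(V_0)^\perp$, using that $V_0$ is an isometry with closed range. Likewise $\overline{\range(A)}=U_0\,\overline{\range(T_\sigma V_0^*|_{\mathcal{D}(A)})}=\range(U_0)$, where density of $\range(T_\sigma)$ together with $V_0^*$ mapping $\mathcal{D}(A)$ onto $\mathcal{D}(T_\sigma)$ gives the middle equality. The surjectivity holds because $V_0V_0^*f$ lies in $\mathcal{D}(A)$ whenever $f\in\mathcal{D}(T_\sigma)$.

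Next, the subspaces $\mathcal{N}(A)$ and $\mathcal{N}(A^*)=\range(U_0)^\perp$ are separable closed subspaces. Choose orthonormal bases indexed by sets $J_1,J_2$ that are at most countable. Let $N_i=J_i$ carry counting measure, and set $M_i=M\sqcup N_i$ with $\mu_i=\mu$ on $M$ and counting measure on $N_i$. These are $\sigma$-finite, and $\mu_1|_M=\mu_2|_M=\mu$. Then $L^2_{\mu_i}=L^2_\mu\oplus\ell^2(J_i)$. Define
\[
V(f,a)=V_0f+\sum_{j\in J_1}a_j e_j,\qquad U(f,b)=U_0f+\sum_{j\in J_2}b_j e'_j .
\]
Each is an isometry because the summands have orthogonal ranges. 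Each is surjective because $\mathcal{H}=\range(V_0)\oplus\mathcal{N}(A)$ and $\mathcal{K}=\range(U_0)\oplus\mathcal{N}(A^*)$. Hence both are unitary, and $V^*x=(V_0^*x,(\langle e_j,x\rangle)_j)$. The case $M=\emptyset$ (that is, $A=0$) is covered trivially.

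Finally, verify the formula. The condition $V^*x\in\mathcal{D}(\Sigma)$ only involves the $M$-component, namely $\sigma V_0^*x\in L^2_\mu$, which is exactly the condition $V_0^*x\in\mathcal{D}(T_\sigma)$. So the domains agree with $\mathcal{D}(A)$. For such $x$, $\Sigma V^*x=(\sigma V_0^*x,0)$, and therefore $U\Sigma V^*x=U_0T_\sigma V_0^*x=Ax$.

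The only genuine step is the identification of the ranges of $U_0$ and $V_0$ with $\mathcal{N}(A^*)^\perp$ and $\mathcal{N}(A)^\perp$. I expect this to be the main obstacle, and I handle it as above using injectivity and dense range of $T_\sigma$. Everything else is bookkeeping with direct sums of measure spaces.
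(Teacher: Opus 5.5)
Your proposal is correct and follows essentially the paper's route: extend the condensed isometries $U_0$, $V_0$ to unitaries by adjoining measure spaces that carry $\range(U_0)^\perp$ and $\range(V_0)^\perp$, then read off the domain and the formula from Theorem~\ref{thm:condensed-mult}. The paper uses an abstract $L^2$ model where you use counting measure on orthonormal-basis index sets. The range identifications you call the main obstacle are correct but not actually needed, since your final verification uses only $\mathcal{D}(A)=\{x: V_0^*x\in\mathcal{D}(T_\sigma)\}$ and $Ax=U_0T_\sigma V_0^*x$, and any orthonormal bases of the two complements make $U$ and $V$ unitary (also, ``$V_0V_0^*f$'' should read ``$V_0f$'').
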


We start with an instrumental lemma.
\begin{lemma}\label{lem:dense}
The following holds:
\begin{enumerate}[\normalfont(i)]
   \item \label{item:dense} The domain $\mathcal{D}(A^*A)$ is dense in $\mathcal{D}(A)$ with respect to $\lVert\,\cdot\,\rVert_A$ and dense in $\mathcal{H}$ with respect to $\lVert\,\cdot\,\rVert_\mathcal{H}$.
    \item \label{item:dense2} If $(M, \mu)$ is a measure space, and $\theta \colon M \to [0, \infty)$ is a measurable function that is positive almost everywhere, then $\{ f \in L^2_\mu : \theta f \in L^2_\mu \}$ is dense in $L^2_\mu$.
\end{enumerate}
\end{lemma}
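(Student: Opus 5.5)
For part \ref{item:dense}, I would rely on the classical von Neumann fact that for closed densely defined $A$ the operator $I + A^*A$ is a bijection from $\mathcal{D}(A^*A)$ onto $\mathcal{H}$. I would prove this directly from the graph of $A$ so as not to lean on anything external. The graph $\Gamma(A) = \{(x, Ax)\}$ is closed in $\mathcal{H}\oplus\mathcal{K}$. Its orthogonal complement is $\{(-A^*y, y) : y \in \mathcal{D}(A^*)\}$. Hence for any $h \in \mathcal{H}$ we may decompose $(h,0) = (x, Ax) + (-A^*y, y)$. This gives $y = -Ax$, so $Ax \in \mathcal{D}(A^*)$ and $h = x + A^*Ax$. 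Thus $\mathcal{R}(I+A^*A) = \mathcal{H}$.

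With surjectivity in hand, density in the graph norm is short. Suppose $z \in \mathcal{D}(A)$ is $\langle\cdot,\cdot\rangle_A$-orthogonal to $\mathcal{D}(A^*A)$. For every $x \in \mathcal{D}(A^*A)$,
\[
0 = \langle z, x\rangle_\mathcal{H} + \langle Az, Ax\rangle_\mathcal{K} = \langle z, (I + A^*A)x\rangle_\mathcal{H}.
\]
The middle term was rewritten using the definition of the adjoint with $Ax \in \mathcal{D}(A^*)$. Since $(I+A^*A)x$ ranges over all of $\mathcal{H}$, we get $z = 0$. So $\mathcal{D}(A^*A)$ is dense in $(\mathcal{D}(A), \lVert\cdot\rVert_A)$.

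Density in $\mathcal{H}$ then follows from two facts: $\lVert\cdot\rVert_\mathcal{H} \le \lVert\cdot\rVert_A$, and $\mathcal{D}(A)$ is dense in $\mathcal{H}$ by assumption. The only real content in this part is the graph decomposition. I expect it to be the main obstacle, mostly in checking the sign conventions in the description of $\Gamma(A)^\perp$.

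For part \ref{item:dense2}, I would use truncation. Let $E_n = \{m : \theta(m) \le n\}$. These sets increase, and their union is all of $M$ because $\theta$ is finite-valued. (Positivity of $\theta$ is not even needed here.) Given $f \in L^2_\mu$, set $f_n = \1_{E_n} f$. Then $\theta f_n$ is bounded pointwise by $n|f|$, so it lies in $L^2_\mu$. Moreover $\lVert f - f_n\rVert^2 = \int_{M\setminus E_n} |f|^2\, d\mu \to 0$ by dominated (or monotone) convergence. Hence $f_n \to f$, which proves density.
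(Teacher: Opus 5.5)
Your proof is correct. The paper gives no argument of its own for this lemma. It defers part~\ref{item:dense} to \cite[p.~195]{Pedersen1989} and part~\ref{item:dense2} to \cite[Lemma~2.1]{Gockenbach2016} together with \cite[Theorem~10.10]{Hall2013}.

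\emph{Part (i).} You supply the standard von Neumann argument behind the first citation. The decomposition $\mathcal{H}\oplus\mathcal{K}=\Gamma(A)\oplus\{(-A^*y,y): y\in\mathcal{D}(A^*)\}$ yields $\mathcal{R}(I+A^*A)=\mathcal{H}$. Orthogonality in the graph inner product then gives the density, and your sign conventions are right. That last step tacitly uses that $(\mathcal{D}(A),\lVert\cdot\rVert_A)$ is complete, which holds because $A$ is closed.

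\emph{Part (ii).} Your truncation by $\{\theta\le n\}$ is the same device the paper itself uses later, inside the proof of Theorem~\ref{thm:svd1}, with the sets $G_k$ and $\{\sigma<k\}$.

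\emph{What your route buys.} It is self-contained, whereas the citations are merely shorter. It shows that (ii) holds for any finite-valued measurable $\theta$ on an arbitrary measure space; as you note, positivity is superfluous, and so is $\sigma$-finiteness. Finally, the surjectivity of $I+A^*A$ you prove is also the key step in showing that $A^*A$ is self-adjoint. The paper asserts that fact without proof at the start of Theorem~\ref{thm:svd1}.
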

\begin{proof}
\ref{item:dense} follows immediately from \cite[p.~195]{Pedersen1989} and \ref{item:dense2} is a direct consequence of \cite[Lemma~2.1]{Gockenbach2016} and \cite[Theorem~10.10]{Hall2013}.
\end{proof}

To prove the condensed singular value decomposition, we first establish the following case. 

\begin{theorem}[Condensed singular value decomposition with trivial kernel]\label{thm:svd1}
Let $A \colon \mathcal{D}(A) \subseteq \mathcal{H} \to \mathcal{K}$ be a closed, densely defined linear operator with $\mathcal{N}(A) = \{0\}$. Then there exist a $\sigma$-finite Borel measure space $(M, \mu)$, isometry $U \in \mathcal{L} (L^2_\mu, \mathcal{K})$, unitary operator $V  \in \mathcal{L}( L^2_\mu, \mathcal{H})$, and a measurable function $\sigma \colon M \to [0, \infty)$ that is positive almost everywhere such that 
\[\mathcal{D}(A) = \{x \in \mathcal{H} : V^*x \in \mathcal{D}(T_\sigma)\} \quad \text{and } Ax = U T_\sigma V^* x \quad \text{for all } x \in \mathcal{D}(A).\]
\end{theorem}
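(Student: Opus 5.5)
We will build the decomposition from the spectral theorem for the positive self-adjoint operator $A^*A$, following the polar decomposition.

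First, since $A$ is closed and densely defined, von Neumann's theorem gives that $A^*A$ is self-adjoint and nonnegative, with $\mathcal{D}(A^*A)$ a core for $A$ (Lemma~\ref{lem:dense}\ref{item:dense}). Let $|A| \coloneqq (A^*A)^{1/2}$. Apply the multiplication-operator form of the spectral theorem to $|A|$: there is a $\sigma$-finite Borel measure space $(M,\mu)$, a unitary $V \in \mathcal{L}(L^2_\mu,\mathcal{H})$, and a measurable $\sigma\colon M\to[0,\infty)$ with $\mathcal{D}(|A|) = \{x : V^*x\in\mathcal{D}(T_\sigma)\}$ and $|A| = V T_\sigma V^*$. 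Since $\mathcal{N}(|A|) = \mathcal{N}(A) = \{0\}$, the function $\sigma$ is positive almost everywhere: otherwise $\1_{\{\sigma=0\}}\neq 0$ in $L^2_\mu$ (after restricting to a finite-measure piece), and $V\1_{\{\sigma=0\}}$ is a nonzero element of $\mathcal{N}(|A|)$.

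Next, we identify domains and norms. For $x\in\mathcal{D}(A^*A)$ we have $\lVert Ax\rVert^2 = \langle A^*Ax,x\rangle = \lVert |A|x\rVert^2$. Both $A$ and $|A|$ are closed, and $\mathcal{D}(A^*A)$ is a core for each: for $A$ by Lemma~\ref{lem:dense}, and for $|A|$ because $\mathcal{D}(|A|^2)$ is a core for $|A|$ by the functional calculus. The graph norms of $A$ and $|A|$ coincide on this common core, so closing it in either norm yields $\mathcal{D}(A) = \mathcal{D}(|A|)$ and $\lVert Ax\rVert = \lVert |A|x\rVert$ for all $x$ in this domain. This gives the domain statement $\mathcal{D}(A) = \{x : V^*x\in\mathcal{D}(T_\sigma)\}$. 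I expect this core and closure argument to be the main technical point.

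Finally, we define $U$. Consider the map $W\colon \mathcal{R}(|A|)\to\mathcal{K}$, $|A|x\mapsto Ax$. It is well defined and isometric by the norm identity. Since $|A|$ is self-adjoint and injective, $\mathcal{R}(|A|)$ is dense in $\mathcal{H}$. Hence $W$ extends to an isometry $W\in\mathcal{L}(\mathcal{H},\mathcal{K})$, and $A = W|A|$. Set $U\coloneqq WV\in\mathcal{L}(L^2_\mu,\mathcal{K})$, which is an isometry as a composition of isometries. Then for $x\in\mathcal{D}(A)$,
\[
Ax = W|A|x = WVT_\sigma V^*x = UT_\sigma V^*x,
\]
which completes the proof. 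The density of $\mathcal{R}(|A|)$ follows from $\mathcal{R}(|A|)^\perp = \mathcal{N}(|A|^*) = \mathcal{N}(|A|) = \{0\}$.
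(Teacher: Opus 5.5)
Your proof is correct, but it is organized differently from the paper's. You go through the polar decomposition. First you prove the domain identity $\mathcal{D}(A)=\mathcal{D}(\lvert A\rvert)$ with $\lVert Ax\rVert=\lVert \lvert A\rvert x\rVert$ by a common-core argument. Then you build the polar isometry $W$ on the dense range $\mathcal{R}(\lvert A\rvert)$, and only then diagonalize $\lvert A\rvert$ and set $U=WV$.

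The paper never forms $\lvert A\rvert$. It applies the multiplication-form spectral theorem to $A^*A$, proves by hand that the multiplier $\theta$ is positive a.e., and puts $\sigma=\sqrt{\theta}$. It defines $U$ on the explicit dense set $\{f : f/\sigma,\ \sigma f\in L^2_\mu\}$ by $f\mapsto AV(f/\sigma)$. It then extends $Ax=UT_\sigma V^*x$ from $\mathcal{D}(A^*A)$ to $\mathcal{D}(A)$ using truncations $f\1_{\{\sigma<k\}}$, closedness of $A$, and graph-norm density of $\mathcal{D}(A^*A)$. This is Claim~(iii), the concrete $L^2_\mu$ version of your core argument. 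The two constructions produce the same isometry, since $AV(f/\sigma)=A\lvert A\rvert^{-1}Vf=WVf$.

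What each route buys:
\begin{itemize}
\item Your route is shorter and more conceptual. It makes visible that the left singular operator is the polar isometry composed with the right singular operator. The cost is reliance on the square-root functional calculus: you need $\lvert A\rvert^2=A^*A$ with equal domains, and that $\mathcal{D}(\lvert A\rvert^2)$ is a core for $\lvert A\rvert$.
\item The paper's route uses only the spectral theorem for $A^*A$. In coordinates it reproves the domain identity it later cites as Lemma~\ref{lem:partial-iso}\ref{it:pi-domain1}. It also obtains the polar decomposition afterwards as a consequence, rather than using it as input.
\end{itemize}

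Two small points you should make explicit:
\begin{itemize}
\item The spectral theorem only supplies a real-valued multiplier. You need $\sigma\ge 0$ a.e., for instance because the essential range of $\sigma$ equals the spectrum of $\lvert A\rvert\subseteq[0,\infty)$, before redefining $\sigma$ on a null set.
\item You use $\mathcal{N}(\lvert A\rvert)=\mathcal{N}(A)$ before proving your norm identity. It follows directly from $\mathcal{N}(\lvert A\rvert)=\mathcal{N}(A^*A)=\mathcal{N}(A)$.
\end{itemize}
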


\begin{proof}
Since $A$ is closed and densely defined, the operator $A^*A$ is self-adjoint
with domain
\[\mathcal{D}(A^*A) = \{x \in \mathcal{D}(A): Ax \in \mathcal{D}(A^*)\} \subseteq \mathcal{D}(A).\]
By Lemma~\ref{lem:dense}, $\mathcal{D}(A^*A)$ is dense in $\mathcal{H}$ with respect to $\lVert\,\cdot\, \rVert_\mathcal{H}$. By spectral theorem in multiplication operator form \cite[Theorem~10.10]{Hall2013}, there exist a $\sigma$-finite measure space $(M, \mu)$, a unitary operator $V \in \mathcal{L}(L^2_\mu, \mathcal{H})$, and a measurable function $\theta \colon M \to \R$ such that
\[
A^*Ax = V T_\theta V^*x \quad \text{ for all } x \in \mathcal{D}(A^*A)
\] 
and
\begin{equation}\label{eq:domain-A*A-equiv}
  V^*\bigl(\mathcal{D}(A^*A)\bigr) = \bigl\{f \in L^2_\mu  : \theta f \in L^2_\mu \bigr\} = \mathcal{D}(T_\theta).
\end{equation}

\underline{\textsc{Claim}~(i)}: $\theta > 0$ almost everywhere.

Since $(M,\mu)$ is $\sigma$-finite, there exist measureable sets $E_k\subseteq M$ with $E_k\uparrow M$ and $\mu(E_k)<\infty$ for all $k \in \N$. Suppose there is a measurable subset $F \subseteq M$ such that $\mu(F)>0$ and $\theta<0$ on $F$. Then there exists an $k$ such that  $0<\mu(F\cap E_k)<\infty$. Let $F_j=\{a \in M: -j \le \theta(a) \le -1/j \}$, then 
\[
0<\mu(F\cap E_k) \leq \mu \Big(\bigcup_j F_j\cap E_k\Big) \leq \sum_j \mu(F_j\cap E_k).
\]
Consequently, there exists $j$ such that $\mu(F_j\cap E_k)>0$, and 
\[
\int (\theta \1_{F_j\cap E_k})^2 \, d\mu =\int_{F_j\cap E_k} \theta^2 \, d\mu \leq j^2 \mu(F_j\cap E_k) <\infty,
\]
so $\1_{F_j\cap E_k} \in V^* \big(\mathcal{D}(A^*A)\big)$. We then obtain a contradiction
\begin{align*}
\langle A V \1_{F_j\cap E_k} &, A V \1_{F_j\cap E_k} \rangle_\mathcal{K} = \langle V^* A^*A V \1_{F_j\cap E_k}, \1_{F_j\cap E_k} \rangle_{L^2_\mu } \\
&= \langle T_\theta \1_{F_j\cap E_k}, \1_{F_j\cap E_k} \rangle_{L^2_\mu }  =\int_{F_j\cap E_k} \theta \,d\mu \leq -\frac{1}{j} \mu(F_j\cap E_k) < 0.
\end{align*}
Therefore, $\theta \ge 0$ almost everywhere. 

Let $E \coloneqq \{a \in M : \theta(a) = 0 \}$. If $\mu(E) > 0$, then similarly, there is $k$ such that $0<\mu(E\cap E_k)<\infty$. The indicator function $\1_{E\cap E_k}$ is nonzero in $L^2_\mu$. Since
\[\int(\theta \1_{E\cap E_k})^2 \,d\mu= \int_{E\cap E_k} \theta^2 \,d\mu = 0,\]
we obtain $\1_{E\cap E_k} \in V^*(\mathcal{D}(A^*A))$. Thus, $V\1_{E\cap E_k} \neq 0$ in $\mathcal{D}(A^* A)$. However,
\[
A^*A V \1_{E\cap E_k} = V T_\theta V^* V \1_{E\cap E_k} = V T_\theta \1_{E\cap E_k} = V (\theta \1_{E\cap E_k}) = 0,
\]
contradicting the injectivity of $A^*A$ given by $\mathcal{{N}}(A)=\{0\}$. Thus, $\theta > 0$ almost everywhere.

\underline{\textsc{Claim}~(ii)}: Let $\sigma \coloneqq \sqrt{\theta}$. There exists an isometry $U \in \mathcal{L}(L^2_\mu,\mathcal{K})$ such that $Ax=U T_\sigma V^*x$ for all $x\in\mathcal{D}(A^*A)$.

Consider
\[
\mathcal{D}_0 \coloneqq \{f\in L^2_\mu: f/\sigma\in\mathcal{D}(T_\theta)\} = \{ f\in L^2_\mu: f/\sigma\in L^2_\mu, \sigma f\in L^2_\mu\}.
\]
Since $\sigma>0$ almost everywhere, consider $G_k=\{1/k\le\sigma\le k\}$ for $k=1,2,\dots$. Since $G_k\uparrow M$, $f\1_{G_k}\to f$ in $L^2_\mu$ for any $f\in L^2_\mu$ by monotone convergence. Moreover, for each $k$,
\[
\lvert f\1_{G_k}/\sigma\vert \le k\1_{G_k} \lvert f\rvert \in L^2_\mu,\quad \lvert f\1_{G_k}\sigma \rvert\le k\1_{G_k} \lvert f \rvert\in L^2_\mu,
\]
so $\mathcal{D}_0$ is dense in $L^2_\mu$.

For $f\in\mathcal{D}_0$, $V(f/\sigma)\in \mathcal{D}(A^*A)\subseteq \mathcal{D}(A)$ by construction. Define $\widetilde{U} \colon \mathcal{D}_0 \to \mathcal{K}$ by
\[
  \widetilde{U} f \coloneqq  A V (f/\sigma), \quad f \in \mathcal{D}_0.
\]
Since
\begin{align*}
    \lVert \widetilde{U} f\rVert_{\mathcal{K}}^2
    &= \lVert A V (f/\sigma)\rVert_{\mathcal{K}}^2
    = \langle A^*A V (f/\sigma), V(f/\sigma) \rangle_{\mathcal{H}} \\
    &= \langle V T_\theta (f/\sigma), V(f/\sigma) \rangle_{\mathcal{H}}
    = \langle T_\theta (f/\sigma), (f/\sigma) \rangle_{L^2_\mu}
    = \int \theta\,\lvert f/\sigma\rvert^2\,d\mu
    = \lVert f\rVert^2_{L^2_\mu},
\end{align*}

$\widetilde{U}$ is isometric on the dense subspace $\mathcal{D}_0 \subseteq L^2_\mu $ and therefore extends uniquely to an isometry $U$ in $\mathcal{L}(L^2_\mu , \mathcal{K})$.  

For $x\in\mathcal{D}(A^*A)$, let $g=V^*x\in \mathcal{D}(T_\theta)$ and $f\coloneqq\sigma g$. Since 
\[
\int \sigma^2\lvert V^*x\rvert^2\,d\mu
= \int \theta\lvert V^*x\rvert^2\,d\mu
\le \int \lvert V^*x\rvert^2\,d\mu
+ \int \theta^2\lvert V^*x\rvert^2\,d\mu
= \lVert V^*x\rVert_{L^2_\mu}^2
+ \lVert T_\theta V^*x\rVert_{L^2_\mu}^2 < \infty.
\]
$T_\sigma V^*x\in\mathcal{D}_0$. Thus, $UT_\sigma V^*x = AVV^*x=Ax$.

\underline{\textsc{Claim}~(iii)}: $V\vert_{\mathcal{D}(T_\sigma)}$ is a norm preserving operator between Banach spaces $(\mathcal{D}(T_\sigma), \lVert \, \cdot \, \rVert_{T_\sigma})$ and the domain $(\mathcal{D}(A), \lVert \,\cdot\,\rVert_A)$. In particular, $V\mathcal{D}(T_\sigma) = \mathcal{D}(A)$.

Let $f\in \mathcal{D}(T_\sigma)$. By dominated convergence theorem, $f\1_{\{\sigma <k\}}\to f$ as $k \to \infty$.  Since 
\[
\lVert\sigma^2 f\1_{\{\sigma <k\}}\rVert_{L^2_\mu} \le k^2 \lVert f\1_{\{\sigma <k\}}\rVert_{L^2_\mu}<\infty,
\] 
$f\1_{\{\sigma <k\}}\in \mathcal{D}(T_\sigma^2)$ for all $k \in \N$. We obtain $Vf\1_{\{\sigma <k\}} \in \mathcal{D}(A^*A) \subseteq \mathcal{D}(A)$ and combining with Claim~(ii),
\[
AVf\1_{\{\sigma <k\}} = U T_\sigma V^* Vf\1_{\{\sigma <k\}} = U T_\sigma f\1_{\{\sigma <k\}}.
\]
By unitarity of $V$, $Vf\1_{\{\sigma <k\}}\to Vf$. Since $\sigma f \in L^2_\mu $,
\[
\lVert T_\sigma f\1_{\{\sigma <k\}} - T_\sigma f\rVert_{L^2_\mu}^2 = \int \vert\sigma f\1_{\{\sigma <k\}} - \sigma f|^2\,d\mu =\int_{\{\sigma \geq k\}} |\sigma f|^2\,d\mu\to 0,
\]
and hence $AVf\1_{\{\sigma <k\}} = U T_\sigma f\1_{\{\sigma <k\}} \to U T_\sigma f$ as $U$ is an isometry. By closedness of $A$, $Vf \in \mathcal{D}(A)$ and 
\begin{equation}\label{eq:closed-limit}
AVf = U T_\sigma f.
\end{equation}

By \eqref{eq:closed-limit}, and because $V$ is unitary and $U$ is an isometry,
\[
\lVert Vf\rVert_A^2
=\lVert Vf\rVert_{\mathcal H}^2+\lVert AVf\rVert_{\mathcal K}^2
=\lVert f\rVert_{L^2_\mu}^2+\lVert T_\sigma f\rVert_{L^2_\mu}^2
=\lVert f\rVert_{T_\sigma}^2.
\]
It follows that $V\vert_{\mathcal{D}(T_\sigma)}$ is a norm-preserving injection.

Let $x \in \mathcal{D}(A)$. By Lemma~\ref{lem:dense}, let $(x_k)_{k=0}^\infty$ be a sequence in $\mathcal{D}(A^*A)$ such that $x_k \to x$ in the graph norm. By \eqref{eq:domain-A*A-equiv}, there exists $g_k \in \mathcal{D}(T_\theta) = \mathcal{D}(T_\sigma^2)$ such that $Vg_k = x_k$ for all $k\in \mathbb{N}$. Since $V$ is norm-preserving, for sufficiently large $k$ and $j$,
\[\lVert g_k - g_j \rVert_{T_\sigma} = \lVert Vg_k - Vg_j \rVert_A = \lVert x_k - x_j \rVert_A \to 0. \]
Therefore, there exists $g \in \mathcal{D}(T_\sigma)$ such that $g_k \to g$ in $\lVert\,\cdot\,\rVert_{T_\sigma}$. By continuity of $V\vert_{\mathcal{D}(T_\sigma)}$, $Vg = x$, which completes the claim.

Finally, for any $x \in \mathcal{D}(A)$, $V^*x \in \mathcal{D}(T_\sigma)$. Substituting $f = V^*x$ into \eqref{eq:closed-limit} yields
\[
Ax = A(V V^* x) = U T_\sigma V^* x. \qedhere
\]
\end{proof}

We would like to emphasize that this result cannot be obtained by simply considering $A$ as a bounded operator on $(\mathcal{D}(A), \lVert\,\cdot\,\rVert_A)$ and apply singular value decomposition for bounded operator \cite{CraneGockenbach2020}. In that case, the operator $V: L^2_\mu  \to \mathcal{D}(A)$ obtained will be unitary only with respect to $\lVert\,\cdot\,\rVert_A$ instead of the Hilbert space norm $\lVert\,\cdot\,\rVert_\mathcal{H}$. 

Now we proceed to prove the complete version of condensed singular value decomposition.

\begin{proof}[Proof of Theorem~\ref{thm:condensed-mult}]
Let $A_0\coloneqq A|_{\mathcal{D}(A)\cap \mathcal{N}(A)^{\perp}}$. Then
\[
A_0 : \mathcal{D}(A)\cap \mathcal{N}(A)^{\perp} \subseteq \mathcal{N}(A)^\perp \to \mathcal{K}
\]
is a densely defined, closed operator with trivial kernel. Applying Theorem~\ref{thm:svd1} to $A_0$ yields an isometry $U \in \mathcal{L}(L^2_\mu, \mathcal{K})$, a unitary operator $V_0 \in \mathcal{L}(L^2_\mu, \mathcal{N}(A)^\perp)$, and a measurable function $\sigma \colon M \to [0, \infty)$ that is positive almost everywhere such that 
\[
A_0x = U T_\sigma V_0^*x \quad \text{for all } x \in \mathcal{D}(A)\cap \mathcal{N}(A)^{\perp}.
\]

Let $V:L^2_\mu \to \mathcal{H}$ be the isometry obtained by letting $Vf=V_0f$ for all $f\in L^2_\mu$. It is straightforward to verify that 
\begin{equation}\label{eq:V}
  V^* x=
\begin{cases}
    V_0^{*}x  & x\in \mathcal{N}(A)^{\perp},\\
    0          & x\in \mathcal{N}(A).
\end{cases}
\end{equation}
For any $x\in\mathcal{H}$, we can uniquely write $x=x_0+x_1$ with $x_0\in\mathcal{N}(A)^\perp$ and $x_1\in \mathcal{N}(A)$. Since $\mathcal{N}(A) \subseteq \mathcal{D}(A)$, $x \in \mathcal{D}(A)$ if and only if $x_0 \in \mathcal{D}(A_0)$. We obtain that $x \in \mathcal{D}(A)$ if and only if $V^*x \in \mathcal{D}(T_\sigma)$ as $V^*x = V_0^*x_0$ and $x_0 \in \mathcal{D}(A_0)$ if and only if $V_0^*x_0 \in \mathcal{D}(T_\sigma)$.

At the same time, 
\[
U T_\sigma V^*x_0 = UT_\sigma V_0^{*}x_0 =A_0x_0 =Ax_0.
\]
Since $V^* x_1=0$, we have $U T_\sigma V^* x_1=0=Ax_1$, which completes the proof. 
\end{proof}

Next, we construct the full singular value decomposition by augmenting the operators and spaces in the condensed singular value decomposition.

\begin{proof}[Proof of Theorem~\ref{thm:full-mult}]
Apply Theorem~\ref{thm:condensed-mult} to $A$ to obtain the $\sigma$-finite Borel measure space $(M, \mu)$, isometries $V_0  \in \mathcal{L}( L^2_{\mu}, \mathcal{H})$ and $U_0 \in \mathcal{L} (L^2_{\mu}, \mathcal{K})$, and the measurable function $\sigma \colon M \to [0, \infty)$ that is positive almost everywhere. As every Hilbert space is isometrically isomorphic to an $L^2$ space, there exist measure spaces $(N_1,\nu_1)$, $(N_2,\nu_2)$ and unitary maps $P_1 \colon \range(V_0)^\perp \to L^2_{\nu_1}$ and $P_2 \colon \range(U_0)^\perp \to L^2_{\nu_2}$ \cite[Theorem~5.30]{Folland1984}. Let $M_1 \coloneqq M \sqcup N_1$ and $\mu_1(X \sqcup Y) \coloneqq \mu(X) + \nu_1(Y)$ for any measurable subsets $X \subseteq M$ and $Y \subseteq N_1$. It is straightforward to verify that $(M_1,\mu_1)$ is a well-defined measure space. Consider the map
\[Q_1 \colon L^2_{\mu_1} \to L^2_{\mu}  \oplus L^2_{\nu_1}, \quad f \mapsto (f\rvert_M, f\rvert_{N_1}).\]
Since
\[
\lVert f \rVert^2_{L^2_{\mu_1}} = \int_{M} \lvert f\rvert^2 \,d\mu + \int_{N_1} \lvert f\rvert^2 \,d\nu_1 = \bigl\lVert f\rvert_{M} \bigr\rVert_{L^2_{\mu} }^2 + \bigl\lVert f\rvert_{N_1} \bigr\rVert_{L^2_{\nu_1}}^2 = \lVert Q_1f \rVert_{L^2_{\mu}  \oplus L^2_{\nu_1}}^2,
\]
$Q_1$ is an isometry. For any $(g,h) \in  L^2_{\mu}  \oplus L^2_{\nu_1}$, let $f$ be defined by $f=g$ on $M$ and $f=h$ on $N_1$. The same computation shows that $f \in L^2_{\mu_1}$ and $Q_1f = (g,h)$. Therefore, $Q_1$ is surjective and hence unitary. Let $M_2 \coloneqq M \sqcup N_2$ with measure $\mu_2$ defined analogously. By the same argument, the map $Q_2 \colon L^2_{\mu_2} \to L^2_\mu  \oplus L^2_{\nu_2}$ defined by $f \mapsto (f\rvert_M, f\rvert_{N_2})$ is unitary. 

Now, define the unitary operators $V \in \mathcal{L}(L^2_{\mu_1}, \mathcal{H})$ and $U \in \mathcal{L}(L^2_{\mu_2}, \mathcal{K})$ via the compositions
\[
V \coloneqq (V_0 \oplus P_1^*) Q_1, \quad U \coloneqq (U_0 \oplus P_2^*) Q_2
\]
and define the block operator $\Sigma_0 \colon \mathcal{D}(T_\sigma) \oplus L^2_{\nu_1} \to L^2_\mu \oplus L^2_{\nu_2}$ by $\Sigma_0 (g, h) = (T_\sigma g, 0)$. Recall that $\Sigma \colon \mathcal{D}(\Sigma) \subseteq L^2_{\mu_1} \to L^2_{\mu_2}$ is defined by
\[
  \mathcal{D}(\Sigma) = \{f \in L^2_{\mu_1} : \sigma (f\rvert_M) \in L^2_\mu\}, \quad
  \Sigma f =
  \begin{cases}
  \sigma (f\rvert_M) & \text{on } M, \\
  0 & \text{on } M_2 \setminus M.
  \end{cases}
\] 
It follows immediately that $\mathcal{D}(\Sigma) = Q_1^*(\mathcal{D}(T_\sigma) \oplus L^2_{\nu_1})$ and $\Sigma = Q_2^* \Sigma_0  Q_1$.

For any $x \in \mathcal{H}$, we can uniquely write $x = x_0 + x_1$ where $x_0 \in \range(V_0)$ and $x_1 \in \range(V_0)^\perp = \mathcal{N}(A)$. 
Observe that $V^*x = Q_1^* (V_0^* x_0, P_1 x_1)$. Consequently, $V^*x \in \mathcal{D}(\Sigma)$ if and only if $V_0^* x_0 \in \mathcal{D}(T_\sigma)$. By Theorem~\ref{thm:condensed-mult}, this holds if and only if $x_0 \in \mathcal{D}(A)$. Since $\mathcal{N}(A) \subseteq \mathcal{D}(A)$, this is equivalent to $x \in \mathcal{D}(A)$, establishing the domain condition.

Finally, for $x = x_0 + x_1 \in \mathcal{D}(A)$, $Ax_1 = 0$, so $Ax = Ax_0$. Thus,
\begin{align*}
  U \Sigma V^*x &= U (Q_2^* \Sigma_0  Q_1) V^*x = (U_0 \oplus P_2^*) Q_2 Q_2^* \Sigma_0  Q_1 Q_1^* (V_0^* x_0, P_1 x_1) \\
  &= (U_0 \oplus P_2^*) \Sigma_0  (V_0^* x_0, P_1 x_1) = (U_0 \oplus P_2^*) (T_\sigma V_0^* x_0, 0) = U_0 T_\sigma V_0^* x_0 = Ax_0 = Ax. \qedhere
\end{align*}
\end{proof}

Let $A = U T_\sigma  V^*$ be the condensed singular value decomposition of $A$. We say that $U$ and $V$ are respectively the left and right singular operators of $A$. For a scalar $s \geq 0$, we say that $s$ is a \emph{singular value} of $A$ if one of the following holds:
\begin{enumerate}[\normalfont(i)]
  \item $s$ is in the essential range of $\sigma$,
  \item $s = 0$ and $\mathcal{N}(A) \neq \{0\}$.
\end{enumerate}
Let $\mathsf{\Sigma}(A)$ denote the set of singular values of $A$. Because the essential range of a measurable function is closed, $\mathsf{\Sigma}(A)$ is a closed subset of $[0, \infty)$, and is therefore a Borel measurable set. For a singular value $s > 0$, we call $m_s \coloneqq \dim \mathcal{N}(T_\sigma - s I)$ the \emph{multiplicity} of $s$. If $m_s > 0$, the non-zero elements in the spaces $U(\mathcal{N}(T_\sigma - s I))$ and $V(\mathcal{N}(T_\sigma - s I))$ are called the left and right singular vectors corresponding to $s$, respectively. Moreover concretely,
\[
\mathsf{\Sigma}(A)
=
\bigl\{\sqrt{\lambda}:\lambda\in\mathsf{\Lambda}(A^*A)\bigr\}.
\]

\subsection{Singular value decomposition in direct integral form}\label{sec:svd-direct}

We first recall the definition of a direct integral \cite[Section~7.3]{Hall2013}. Let $(M,\mu)$ be a $\sigma$-finite measure space and suppose for each $x \in M$, we associate a separable Hilbert space $\mathcal{H}_x$ with inner product $\langle \,\cdot , \cdot \,\rangle_x$  such that $x \mapsto \dim \mathcal{H}_x$ is a measurable function from $M$ to $[0,\infty]$. A section $\tau$ is a function from $M$ to the disjoint union of the $\mathcal{H}_x$'s, such that $\tau(x) \in \mathcal{H}_x$ for all $x \in M$. A measurability structure on $\{\mathcal{H}_x\}_{x \in M}$ is a collection of sections $\{e_k(\cdot)\}_{k=1}^\infty$ such that 
\begin{enumerate}[\normalfont(i)]
  \item for all $x\in M$, $\langle e_k(x),e_j(x)\rangle_x = 0$ for all $k \neq j$,
  \item for all $x\in M$ and $k\in \mathbb{N}$, the norm of each $e_k(x)$ is either $0$ or $1$,
  \item for all $x\in M$, $\overline{\operatorname{span}(\{e_k(x)\}_{k=1}^\infty)} = \mathcal{H}_x$,
  \item and the function $x \mapsto \langle e_k(x), e_j(x) \rangle_x$ is measurable for all $j,k \in \mathbb{N}$.
\end{enumerate}
 A section $\tau$ is measurable if $x \mapsto \langle e_k(x), \tau(x) \rangle_x$ is measurable for all $k \in \mathbb{N}$. Given the measurability structure, the direct integral of $\{\mathcal{H}_x\}_{x \in M}$, denoted 
\[
\int_M^\oplus \mathcal{H}_x \, d\mu(x),
\]
is the Hilbert space of equivalence classes of almost-everywhere-equal measurable sections with finite norm, equipped with the inner product 
\begin{equation}\label{eq:innerp-direct}
\langle \tau, \rho \rangle = \int_M \langle \tau(x), \rho(x) \rangle_x \, d\mu(x).
\end{equation}
 
\begin{theorem}[Singular value decomposition in direct integral form]\label{thm:di}
Let $A: \mathcal{D}(A) \subseteq \mathcal{H} \to \mathcal{K}$ be a closed, densely defined linear operator. Then there exists a $\sigma$-finite measure $\eta$ on $\mathsf{\Sigma}(A)$, a direct integral 
\[
\int_{\mathsf{\Sigma}(A)}^\oplus \mathcal{H}_s \, d\eta(s),
\]
partial isometries 
\[U:\int_{\mathsf{\Sigma}(A)}^\oplus \mathcal{H}_s \, d\eta(s) \to \mathcal{K}, \quad V: \int_{\mathsf{\Sigma}(A)}^\oplus \mathcal{H}_s \, d\eta(s) \to \mathcal{H},\] 
and 
\begin{equation}\label{eq:dom-direct}
  \Sigma: \Bigl\{\tau \in \int_{\mathsf{\Sigma}(A)}^\oplus \mathcal{H}_s \, d\eta(s): \int_{\mathsf{\Sigma}(A)} \lVert s \tau(s) \rVert_s^2 \,d\eta(s) < \infty \Bigr\} \to \int_{\mathsf{\Sigma}(A)}^\oplus \mathcal{H}_s \, d\eta(s), \quad \tau (s) \mapsto s\tau(s)
\end{equation}
such that $\mathcal{D}(A) = \{x \in \mathcal{H} : V^* x \in \mathcal{D}(\Sigma)\}$ and $Ax = U\Sigma V^*x$ for all $x \in \mathcal{D}(A)$.
\end{theorem}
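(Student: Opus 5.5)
We will derive the direct-integral form from the spectral theorem for the nonnegative self-adjoint operator $A^*A$, in the same way the multiplication-operator form (Theorem~\ref{thm:condensed-mult}) was derived from the multiplication form of that theorem.

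\emph{Step 1: spectral data of $A^*A$.} By the spectral theorem in direct integral form \cite[Section~7.3]{Hall2013}, there are a $\sigma$-finite measure $\eta_0$ on $\mathsf{\Lambda}(A^*A)\subseteq[0,\infty)$, a direct integral $\int^\oplus_{\mathsf{\Lambda}(A^*A)}\mathcal{G}_\lambda\,d\eta_0(\lambda)$, and a unitary $W$ from it onto $\mathcal{H}$. Under $W$, the operator $A^*A$ becomes multiplication by $\lambda$ on its natural maximal domain. We push everything forward along the homeomorphism $\lambda\mapsto\sqrt\lambda$ from $\mathsf{\Lambda}(A^*A)$ onto $\mathsf{\Sigma}(A)$, which is legitimate by the identity $\mathsf{\Sigma}(A)=\{\sqrt\lambda:\lambda\in\mathsf{\Lambda}(A^*A)\}$. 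Set $\eta\coloneqq\eta_0\circ(s\mapsto s^2)$ and $\mathcal{H}_s\coloneqq\mathcal{G}_{s^2}$, and transport the measurability structure $e_k(s)\coloneqq e_k^{0}(s^2)$. Composition $\tau\mapsto\tau(\cdot^2)$ is a unitary between the two direct integrals, and it turns multiplication by $\lambda$ into multiplication by $s^2$. Call the composed unitary $V_1\colon\int^\oplus_{\mathsf{\Sigma}(A)}\mathcal{H}_s\,d\eta(s)\to\mathcal{H}$.

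\emph{Step 2: build $U$ and verify $A=U\Sigma V^*$.} We rerun the argument of Theorem~\ref{thm:svd1}, Claims~(ii) and~(iii), with $T_\sigma$ replaced by $\Sigma$. The first point is that $\mathcal{N}(A)=\mathcal{N}(A^*A)$ is the image under $V_1$ of the sections supported on $\{0\}$. If $\eta(\{0\})>0$, let $P$ be the projection onto the sections supported on $(0,\infty)$, and set $V\coloneqq V_1P$; this is a partial isometry with $V^*x=V_1^*x$ for $x\perp\mathcal{N}(A)$ and $V^*x=0$ for $x\in\mathcal{N}(A)$. We keep the point $0$ in $\mathsf{\Sigma}(A)$, since it is a singular value; on it $\Sigma$ acts as zero, so it does not matter that $V$ kills the sections supported there.

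Next, define $\widetilde U$ on $\mathcal{D}_0=\{\tau: \tau/s\in\mathcal{D}(\Sigma^2),\ \tau\text{ supported on }s>0\}$ by $\widetilde U\tau\coloneqq AV_1(\tau/s)$. The computation
\[
\|AV_1(\tau/s)\|^2=\langle A^*AV_1(\tau/s),V_1(\tau/s)\rangle=\int s^2\|\tau(s)/s\|_s^2\,d\eta=\|\tau\|^2
\]
shows that $\widetilde U$ is isometric on $\mathcal{D}_0$. The set $\mathcal{D}_0$ is dense in the sections supported on $(0,\infty)$, by the same truncation to $\{1/k\le s\le k\}$ used in Claim~(ii). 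So $\widetilde U$ extends to an isometry on that subspace; extend it by $0$ on the sections supported on $\{0\}$ to get a partial isometry $U$.

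Then $Ax=U\Sigma V^*x$ holds on $\mathcal{D}(A^*A)$. The truncation-plus-closedness argument of Claim~(iii) extends this to the whole of $\mathcal{D}(A)$ and gives the domain equality $\mathcal{D}(A)=\{x: V^*x\in\mathcal{D}(\Sigma)\}$. That argument uses the density of $\mathcal{D}(A^*A)$ in the graph norm (Lemma~\ref{lem:dense}) and the fact that $V_1$ restricted to $\mathcal{D}(\Sigma)$ preserves norms between the graph norm of $\Sigma$ and $\lVert\,\cdot\,\rVert_A$. Finally, $\mathcal{N}(A)\subseteq\mathcal{D}(A)$ is absorbed exactly as in the proof of Theorem~\ref{thm:condensed-mult}.

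\emph{Main obstacle.} The hardest step is the measure-theoretic bookkeeping in Step~1: the pushforward must preserve the measurability structure, $\eta$ must be supported exactly on $\mathsf{\Sigma}(A)$, and measurability of $s\mapsto\dim\mathcal{H}_s$ must survive. I would handle this by noting that $s\mapsto s^2$ is a Borel isomorphism of $[0,\infty)$. Everything then transfers verbatim, and the support of $\eta_0$ equals $\mathsf{\Lambda}(A^*A)$, so the support of $\eta$ equals $\mathsf{\Sigma}(A)$.
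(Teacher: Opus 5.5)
Your argument is correct, but it takes a different route from the paper.

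\textbf{The paper's route.} The paper does not go back to $A^*A$. It takes the multiplication-form decomposition $A=U_0T_\sigma V_0^*$ already established in Theorem~\ref{thm:condensed-mult} as given. There $\sigma>0$ almost everywhere, so $\mathcal{N}(A)$ is encoded by $V_0$ being a non-surjective isometry, and $\eta=\sigma_*\mu$ puts no mass on $\{0\}$. The paper then:
\begin{enumerate}[\normalfont(i)]
\item reduces to a standard Borel probability space;
\item disintegrates $\mu$ along $\sigma$ (via Fremlin) to get fibers $\mathcal{H}_s=L^2(\sigma^{-1}(s),\mu_s)$, with a measurability structure from fiberwise Gram--Schmidt;
\item proves that restriction to fibers, $W$, is unitary;
\item sets $U=U_0W^*$, $V=V_0W^*$, $\Sigma=WT_\sigma W^*$.
\end{enumerate}
So all the operator-theoretic content (the construction of $U$ and the closedness/domain argument) is inherited, and the new work is purely measure-theoretic.

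\textbf{Your route.} You hand the measure theory to the direct-integral spectral theorem for $A^*A$. Since $A^*A$ is unbounded, you need the version in \cite[Chapter~10]{Hall2013}, not the bounded one in Section~7.3. After that, reparametrizing by the homeomorphism $\lambda\mapsto\sqrt{\lambda}$ of $[0,\infty)$ is routine. The cost is that you must redo Claims~(ii)--(iii) of Theorem~\ref{thm:svd1} in direct-integral language, with the kernel now sitting in the fiber over $s=0$. Your computations there go through: the isometry identity for $\widetilde U$, density of $\mathcal{D}_0$, the identity on $\mathcal{D}(A^*A)$, and the truncation-plus-closedness extension using graph-norm density of $\mathcal{D}(A^*A)$.

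\textbf{Two simplifications.}
\begin{itemize}
\item The projection $P$ is unnecessary. Since $\Sigma$ commutes with $P$ and vanishes on sections supported at $\{0\}$, taking $V=V_1$ (unitary) gives the same domain condition and the same formula $U\Sigma V_1^*x=U\Sigma PV_1^*x$.
\item The theorem only asks for a $\sigma$-finite measure on $\mathsf{\Sigma}(A)$, not one whose support is exactly $\mathsf{\Sigma}(A)$. So the support bookkeeping in your ``main obstacle'' paragraph can be dropped; the pushforward along $\lambda\mapsto\sqrt\lambda$ already does everything needed.
\end{itemize}
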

\begin{proof}
If $A=0$, the conclusion is immediate by taking $\eta=0$ and $\mathcal{H}_s=\{0\}$ for every
$s\in\mathsf{\Sigma}(A)$. Suppose $A \neq 0$. Let $(M,\mu)$, $U_0$, $V_0$, and $\sigma$ be given by Theorem~\ref{thm:condensed-mult}. By \cite[Sections~10.2 and 10.3]{Hall2013}, we may assume that $M$ is standard Borel and $\sigma:M\to \mathsf{\Sigma}(A)$ is Borel measurable. 

Since $\mu$ is nonzero and $\sigma$-finite, choose a positive Borel function $w:M\to(0,\infty)$ such that $\int_M w\,d\mu=1$, and let $d\nu \coloneqq w\,d\mu$. The map
\[
J:L^2_\mu\to L^2_\nu,\qquad f \mapsto w^{-1/2}f,
\]
is a unitary that intertwines $T_\sigma$. Thus, $A= (U_0J^*)T_\sigma (V_0J^*)^*$ is also a singular value decomposition of $A$. Therefore, we may without loss of generality assume that $\mu$ is a probability measure.

Consider the probability measure $\eta$ on $\mathsf{\Sigma}(A)$ defined by $\eta(B)\coloneqq\mu\bigl(\sigma^{-1}(B)\bigr)$ for all Borel subsets $B \subseteq \mathsf{\Sigma}(A)$. Since $M$ is a standard Borel space and $\mu$ is a probability measure, $\mu$ is compact by
\cite[Proposition~451M]{Fremlin2006}. By \cite[Theorem~452I]{Fremlin2006}, there exists a disintegration $\{\mu_s\}_{s\in \mathsf{\Sigma}(A)}$ of $\mu$ over $\eta$ consistent with $\sigma$. In particular, $s\mapsto\mu_s(E)$ is $\eta$-measurable for every Borel set $E\subseteq M$ and
\[
\mu(E)=\int_{\mathsf{\Sigma}(A)}\mu_s(E)\,d\eta(s).
\]
Since $\mathsf{\Sigma}(A) \subseteq [0, \infty)$, the
disintegration is strongly consistent with $\sigma$ by \cite[Proposition~452G(c)]{Fremlin2006}.
Therefore, $\mu_s\bigl(\sigma^{-1}(\{s\})\bigr)=1$ for $\eta$-almost every $s\in\mathsf{\Sigma}(A)$. Consequently, for every nonnegative Borel function $q$,
\[
\int_M q\,d\mu
=\int_{\mathsf{\Sigma}(A)}\biggl(\int_{\sigma^{-1}(s)}q(x)\,d\mu_s(x)\biggr)d\eta(s).
\]

Let $\mathcal{H}_s\coloneqq L^2(\sigma^{-1}(s),\mu_s\vert_{\sigma^{-1}(s)})$. Let$\{B_j\}_{j\ge1}$ be a countable algebra generating the Borel
$\sigma$-algebra of $M$, and let 
$g_k(s)\coloneqq\1_{B_k}\vert_{\sigma^{-1}(s)}$. Since $\mu_s$ is finite, the span of the $g_k(s)$ is
dense in $\mathcal{H}_s$ and $\langle g_j(s),g_k(s)\rangle_s=\mu_s(B_j\cap B_k)$ is measurable in $s$ for all $j$, $k$.
Performing the Gram--Schmidt fiberwise gives a measurability structure $\{e_k(\cdot)\}_{k\ge1}$ and we obtain the direct integral
\[
\int_{\mathsf{\Sigma}(A)}^\oplus\mathcal{H}_s\,d\eta(s).
\]

For $f\in L^2_\mu$, choose a Borel representative and let $(Wf)(s)\coloneqq f\vert_{\sigma^{-1}(\{s\})}$ for $\eta$-almost every $s$. Then $W$ is an isometry from $L^2_\mu$ to the direct integral since for any $f,g\in L^2_\mu$,
\begin{align*}
    \langle Wf, Wg \rangle_{\int \mathcal{H}_s \, d\eta(s)} &= \int_{\mathsf{\Sigma}(A)} \langle Wf(s),Wg(s)\rangle_s d\eta(s)\\
    &= \int_{\mathsf{\Sigma}(A)}\Bigl( \int_{\sigma^{-1}(s)}f(x)\overline{g(x)}\,d\mu_s(x)\Bigr)d\eta(s)=\int_{M}f(x)\overline{g(x)}\,d\mu(x) = \langle f, g\rangle_{L^2_\mu}.
\end{align*}

It remains to prove that $W$ is surjective. It suffices to show $\range(W)$ is dense. Let $\tau\in \range(W)^\perp$. For every bounded Borel
function $b:\mathsf{\Sigma}(A)\to\C$ and every $k\ge1$, $(b\circ\sigma)\1_{B_k}\in L^2_\mu$. Moreover, for $\eta$-almost every $s$, $(W(b\circ\sigma)\1_{B_k})(s)=b(s)g_k(s)$.
Therefore,
\[
0
=
\langle W(b\circ\sigma)\1_{B_k},\tau\rangle
=
\int_{\mathsf{\Sigma}(A)}
b(s)\langle g_k(s),\tau(s)\rangle_s\,d\eta(s).
\]
Therefore, $\langle g_k(s),\tau(s)\rangle_s=0$ for $\eta$-almost every $s$ and hence $\tau=0$. Thus, $\range(W)$ is dense and hence $W$ is unitary.

Consider isometries $U = U_0W^*$ and $V = V_0W^*$. Since $(WT_\sigma f)(s) = s \cdot f\rvert_{\sigma^{-1}(s)} = s (Wf)(s)$, $W\mathcal{D}(T_\sigma) = \mathcal{D}(\Sigma)$ and $W T_\sigma W^* = \Sigma$ where $\Sigma \tau(s) = s \tau(s)$. Thus, by Theorem~\ref{thm:condensed-mult}, $x \in \mathcal{D}(A)$ if and only if $V^* x \in \mathcal{D}(\Sigma)$. For each $x \in \mathcal{D}(A)$, 
\[
Ax = U_0T_\sigma V_0^* x = U_0W^* WT_\sigma W^* WV_0^* x = U \Sigma V^* x. \qedhere 
\]
\end{proof}

\begin{corollary}
Let $0 < s \in \mathsf{\Sigma}(A)$ such that $\eta(\{s\}) > 0$, and define the subspace
\[
\mathcal{E}_s \coloneqq \biggl\{ \tau \in \int_{\mathsf{\Sigma}(A)}^\oplus \mathcal{H}_t \, d\eta(t) : \tau = \mathbbm{1}_{\{s\}} \tau \biggr\}.
\] 
Then the nonzero elements of $U\mathcal{E}_s$ and $V\mathcal{E}_s$ are the spaces of left and right singular vectors of $A$ corresponding to the singular value $s$.
\end{corollary}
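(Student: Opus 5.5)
The plan is to transport everything through the unitary $W\colon L^2_\mu\to\int^\oplus\mathcal{H}_t\,d\eta(t)$ built in the proof of Theorem~\ref{thm:di}. By the definition after Theorem~\ref{thm:full-mult}, the right singular vectors for $s$ are the nonzero elements of $V_0(\mathcal{N}(T_\sigma-sI))$ and the left ones those of $U_0(\mathcal{N}(T_\sigma-sI))$. Since $U=U_0W^*$ and $V=V_0W^*$, it suffices to prove
\[
W\bigl(\mathcal{N}(T_\sigma - sI)\bigr)=\mathcal{E}_s.
\]
Applying $V_0W^*$ or $U_0W^*$ to both sides then gives the claim.

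First I will identify $\mathcal{N}(T_\sigma-sI)$ concretely. A function $f\in L^2_\mu$ lies in it exactly when $(\sigma-s)f=0$ $\mu$-a.e., that is, when $f=\1_{\sigma^{-1}(\{s\})}f$. Such $f$ automatically satisfies $\sigma f=sf\in L^2_\mu$, so there is no domain issue. Second, I will check that multiplication by $\1_{\sigma^{-1}(\{s\})}$ on $L^2_\mu$ corresponds under $W$ to multiplication by $\1_{\{s\}}$ on sections. This holds because $(W(b\circ\sigma)f)(t)=b(t)(Wf)(t)$ for $\eta$-a.e.\ $t$ and any bounded Borel $b$, which is the same computation used in the proof of Theorem~\ref{thm:di}, applied with $b=\1_{\{s\}}$. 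Hence $f=\1_{\sigma^{-1}(\{s\})}f$ if and only if $Wf=\1_{\{s\}}Wf$. Since $W$ is unitary, and in particular surjective, every $\tau\in\mathcal{E}_s$ is of the form $Wf$ with $f$ in the kernel. This gives the displayed equality.

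The main subtlety is the role of $\eta(\{s\})>0$, together with $s>0$ so that zero singular values coming from $\mathcal{N}(A)$ are not involved. We have $\eta(\{s\})=\mu(\sigma^{-1}(\{s\}))$, so the hypothesis says exactly that $m_s>0$, i.e.\ the eigenspace is nontrivial. Correspondingly, $\mathcal{E}_s\cong\mathcal{H}_s\,\eta(\{s\})$ is nonzero, since $\mu_s$ is a probability measure concentrated on $\sigma^{-1}(s)$, which makes $\mathcal{H}_s\neq\{0\}$. Without this hypothesis both sides would be $\{0\}$, and the statement would be vacuous but still true. A further point of care is that sections are defined only $\eta$-a.e. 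Because $\{s\}$ is an atom, the value $\tau(s)$ is nevertheless meaningful, and I will note that the identification $\mathcal{E}_s\cong\mathcal{H}_s$ holds via $\tau\mapsto\tau(s)$, with norm scaled by $\eta(\{s\})^{1/2}$.
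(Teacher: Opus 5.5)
Your proof is correct and is essentially the paper's argument. The paper computes $\mathcal{N}(\Sigma - sI)=\mathcal{E}_s$ directly from $(\Sigma\tau)(t)=t\tau(t)$ and then appeals to the definition of singular vectors. You instead compute $\mathcal{N}(T_\sigma - sI)$ on the $L^2_\mu$ side and push it through $W$, which makes explicit the intertwining $\Sigma = WT_\sigma W^*$ (together with $U=U_0W^*$, $V=V_0W^*$) that the paper's ``follows immediately'' leaves implicit. Your observation that $\eta(\{s\})=\mu(\sigma^{-1}(\{s\}))>0$ is exactly the condition $m_s>0$ is a correct and useful clarification.
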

\begin{proof}
For any section $\tau$, $\tau \in \mathcal{N}(\Sigma - sI)$ if and only if $t\tau(t) = s\tau(t)$ for almost all $t \in \mathsf{\Sigma}(A)$, i.e, $(t-s)\tau(t) = 0$ almost everywhere. Therefore, $\mathcal{N}(\Sigma - sI) = \mathcal{E}_s$, and the result follows immediately from the definition of singular vectors.
\end{proof}

\subsection{Singular value decomposition in operator-valued measure form}\label{sec:svd-ovm}

Again, we first recall the definitions and tools of spectral measure and spectral integrals. Let $\mathcal{H}$ be a Hilbert space. A \emph{spectral measure}, also known as a \emph{projection-valued measure}, $\mu$ on $\R$ is a mapping from the Borel $\sigma$-algebra on $\R$ to the set of orthogonal projections on $\mathcal{H}$ such that $\mu(\R)=I$ and for any disjoint Borel subsets $\{E_k\}_{k = 1}^\infty $,
  \begin{equation}\label{eq:count-add}
    \mu\biggl(\bigcup_{k=1}^\infty E_k \biggr)x =\sum_{k=1}^\infty\mu(E_k)x, \quad \text{for all } x \in \mathcal{H}
\end{equation}
For any spectral measure $\mu$ and $x,y\in\mathcal{H}$, define $\mu_{x,y}(E) \coloneqq \langle \mu(E)x,y\rangle$ for Borel subset $E$ and write $\mu_x \coloneqq \mu_{x,x}$. The measure $\mu_x$ is a finite positive Borel measure on $\R$ with $\mu_x(\R)=\lVert x\rVert^2$.

Let $f:\R\to \mathbb{C}$ be measurable and define
\[
\mathcal{D}_f = \Big\{ x\in\mathcal{H}: \int_\R |f(\lambda)|^2 d\mu_x(\lambda) <\infty\Big\},
\] 
which is dense in $\mathcal{H}$. There is a unique unbounded operator on $\mathcal{H}$, denoted $\int_\R f \,d\mu$, with domain $\mathcal{D}_f$ such that for all $x\in\mathcal{D}_f$ and $y\in\mathcal{H}$,
\[
\biggl\langle  \Big(\int_\R f \,d\mu \Big)x, y \biggr\rangle = \int_\R f(\lambda) \, d\mu_{x,y}(\lambda).
\]
In particular, $\int_\R f \,d\mu$ is normal (hence closed), and if $f$ is real-valued, then $\int_\R f \,d\mu$ is self-adjoint \cite[Section~10]{Hall2013}. We call $\int_\R f \,d\mu$ the \emph{spectral integral} of $f$.

Let $S$ be a self-adjoint operator on $\mathcal{H}$. By spectral theorem for self-adjoint operators in spectral measure form \cite[Theorem~5.7]{Konrad2012}, there exists a unique spectral measure $\mu$ on $\R$ associated with $S$ such that $S$ is the spectral integral of the identity function. For a Borel function $f:\R\to\C$, define $f(S)$ by the spectral integral of $f(\lambda)$ with respect to $\mu$, i.e., 
\[
f(S) \coloneqq \int_\R f(\lambda)\,d\mu(\lambda), \quad \mathcal{D}(f(S))=\Big\{ x\in\mathcal{H}: \int_{\R} |f(\lambda)|^2\,d\mu_x(\lambda) < \infty\Big\}.
\]
The assignment $f\mapsto f(S)$ is called the \emph{functional calculus} of the self-adjoint operator $S$. Throughout this section, let $P$ denote the orthogonal projection onto $\mathcal{N} (S)$, so $(I-P)$ is the orthogonal projection onto $\mathcal{N} (S)^\perp$.

The following lemma lists the fundamental properties for functional calculus \cite[Theorem~3.1 and Theorem~3.2]{Teschl2014} and \cite[Proposition~4.23 and Theorem~5.9]{Konrad2012}. 
\begin{lemma}\label{lem:fc}
Let $S$ be a self-adjoint operator on $\mathcal{H}$ with associated spectral measure $\mu$. For Borel functions $f,g:\R\to\C$, the following holds.
\begin{enumerate}[(i)]
\item For $x\in \mathcal{D}(f(S))$, $ \lVert f(S)x\rVert^2 = \int_{\R} |f(\lambda)|^2 \, d  \mu_x(\lambda)$.
\item\label{it:fc-iii} $f(S)g(S) \subseteq (fg)(S) =\int_{\R}f(\lambda)g(\lambda)\, d \mu(\lambda)$ with domain $\mathcal{D}(f(S)g(S)) = \mathcal{D}(g(S)) \cap \mathcal{D}((fg)(S))$.
\item\label{it:fc-iv} If $E$ is a Borel subset, then $\mu(E)f(S)\subseteq f(S)\mu(E)$. Furthermore, if $f$ is bounded, then equality holds.
\item\label{it:fc-cm}  For $x \in \mathcal{D}(f(S))$, $d\mu_{f(S)x} (\lambda) = |f(\lambda)|^2d\mu_x(\lambda)$.
\item\label{it:fc-sa} $\overline{f}(S)=f(S)^*$. In particular, $f(S)$ is self-adjoint if $f$ is real-valued.
\item\label{it:fc-approx} If $\{ f_k \}_{k=1}^\infty$ is a sequence of measurable simple functions such that $f_k\to f$ pointwise for $k \to \infty$ and $|f_k(\lambda)|\le |f(\lambda)|$ for all $k \in \N$ and $\lambda \in \R$, then $f_k(S)x\to f(S)x$ for all $x\in \mathcal{D}(f(S))$. 
\end{enumerate}
\end{lemma}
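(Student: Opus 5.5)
The plan is to reduce every item to a statement about multiplication operators, where each assertion becomes an elementary fact about measurable functions. By the spectral theorem in multiplication operator form \cite[Theorem~10.10]{Hall2013}, there are a $\sigma$-finite measure space $(N,\nu)$, a unitary $W\in\mathcal{L}(L^2_\nu,\mathcal{H})$ and a real measurable $h\colon N\to\R$ such that $S=WT_hW^*$.

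The first step, and the one I expect to carry the real content, is to identify the spectral measure and the spectral integrals in this model:
\[
\mu(E)=WT_{\1_E\circ h}W^*, \qquad f(S)=WT_{f\circ h}W^*,
\]
with $\mathcal{D}(f(S))=W\mathcal{D}(T_{f\circ h})$. The argument runs as follows.
\begin{enumerate}[\normalfont(a)]
\item The map $E\mapsto WT_{\1_E\circ h}W^*$ is a projection-valued measure. Countable additivity in the strong sense \eqref{eq:count-add} follows from dominated convergence.
\item This measure integrates the identity function to $S$, so by the uniqueness clause of \cite[Theorem~5.7]{Konrad2012} it equals $\mu$.
\item Writing $\phi=W^*x$, one has $\mu_{x,y}(E)=\int_{h^{-1}(E)}\phi\,\overline{W^*y}\,d\nu$. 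Hence $\mu_{x,y}$ is the pushforward under $h$ of $\phi\,\overline{W^*y}\,d\nu$, and in particular $\mu_x=h_*(\lvert\phi\rvert^2\nu)$.
\item The change of variables formula then gives $\int\lvert f\rvert^2\,d\mu_x=\int\lvert f\circ h\rvert^2\lvert\phi\rvert^2\,d\nu$. This identifies $\mathcal{D}_f$ with $W\mathcal{D}(T_{f\circ h})$, and it shows that $WT_{f\circ h}W^*$ satisfies the defining sesquilinear identity of $\int f\,d\mu$.
\item By the uniqueness of the spectral integral, $f(S)=WT_{f\circ h}W^*$.
\end{enumerate}
The only care needed is the $\sigma$-finite pushforward and the change of variables for complex densities. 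I will handle these by splitting into positive and negative parts of the real and imaginary parts, and by working on sets of finite measure.

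With this model in hand, each item is a short computation with $g\coloneqq W^*x$ and $F\coloneqq f\circ h$, $G\coloneqq g\circ h$.
\begin{enumerate}[(i)]
\item This is $\lVert Fg\rVert_{L^2_\nu}^2=\int\lvert f\rvert^2\,d\mu_x$, which is exactly the change of variables from step (d).
\item The composition $T_FT_G$ is defined on $\{u : Gu\in L^2,\ FGu\in L^2\}=\mathcal{D}(T_G)\cap\mathcal{D}(T_{FG})$, and it agrees there with $T_{FG}$. This gives both the inclusion and the domain formula.
\item Multiplication by $\1_E\circ h$ preserves $\mathcal{D}(T_F)$ and commutes with $T_F$ on it, which gives the inclusion. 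If $f$ is bounded, then $T_F$ is everywhere defined, and the two sides coincide.
\item By (iii) of the first step applied to the vector $f(S)x$, whose $W^*$-image is $Fg$, one gets $\mu_{f(S)x}=h_*(\lvert F\rvert^2\lvert g\rvert^2\nu)$. Since $\lvert F\rvert^2=\lvert f\rvert^2\circ h$, this measure equals $\lvert f\rvert^2\,d\mu_x$.
\item This follows from $T_F^*=T_{\overline F}$, the standard adjoint of a multiplication operator, which is proved by testing against indicators of finite-measure sets on which $F$ is bounded. Conjugating by the unitary $W$ preserves adjoints. For real $f$, this gives self-adjointness.
\item We have $\lVert f_k(S)x-f(S)x\rVert^2=\int\lvert f_k-f\rvert^2\,d\mu_x$. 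The integrand tends to $0$ pointwise and is dominated by $4\lvert f\rvert^2\in L^1(\mu_x)$, since $x\in\mathcal{D}(f(S))$. Dominated convergence finishes the argument.
\end{enumerate}

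In summary, the main obstacle is the first step: the identification of $f(S)$ with a multiplication operator, including its exact domain. Once that is in place, all six properties are routine measure-theoretic verifications.
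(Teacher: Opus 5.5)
Your argument is correct, but it takes a different route from the paper. The paper gives no proof of its own and simply cites the functional-calculus theorems in \cite{Teschl2014,Konrad2012}. There these properties are derived intrinsically from the construction of the spectral integral of an abstract spectral measure: simple and bounded functions first, then unbounded $f$ via the truncations $f\1_{\{\lvert f\rvert\le n\}}$ and strong limits.

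You instead pass to the multiplication model $S=WT_hW^*$ of \cite[Theorem~10.10]{Hall2013}. You prove once that $\mu(E)=WT_{\1_E\circ h}W^*$ and $f(S)=WT_{f\circ h}W^*$ with exactly the right domain. After that, (i)--(vi) reduce to elementary facts about multiplication operators:
\begin{itemize}
\item the domain identities in (ii) and (iii) become set identities;
\item (v) becomes $T_F^*=T_{\overline F}$;
\item (i), (iv) and (vi) become change of variables plus dominated convergence.
\end{itemize}

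Your route buys transparency and matches the way the paper itself uses the multiplication form in the proof of Theorem~\ref{thm:svd1}. Its cost is that it depends on the multiplication-form spectral theorem and on two uniqueness statements: uniqueness of the spectral measure of $S$, and uniqueness of the spectral integral with prescribed domain. The textbook route works for any spectral measure on any measurable space, with no model at all.

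Three small points of presentation:
\begin{itemize}
\item Step (b) is only justified after (c)--(e) have been carried out for the candidate measure $E\mapsto WT_{\1_E\circ h}W^*$. That is what shows it integrates the identity to $S$, domain included, so those steps should come before (b).
\item Your concern about $\sigma$-finite pushforwards is unnecessary. The measures $\lvert W^*x\rvert^2\,d\nu$ and $W^*x\,\overline{W^*y}\,d\nu$ are finite (complex) measures.
\item You use $g$ both for the Borel function in (ii) and for $W^*x$ in (i) and (iv); rename one of them.
\end{itemize}
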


\begin{lemma}\label{lem:partial-iso}
Let $S = A^*A$ and $\mu$ its associated spectral measure. Take $f(\lambda)=\lambda^{1/2}\mathbbm{1}_{[0,\infty)}$, $g(\lambda)=\lambda^{-1/2}\mathbbm{1}_{(0,\infty)}$, and let $S^{1/2}=f(S)$, $S^{-1/2}=g(S)$ via functional calculus.  Then
\begin{enumerate}[(i)]
\item\label{it:pi-domain1} $\mathcal{D}(A)=\mathcal{D}(S^{1/2})$ and $\lVert S^{1/2}x\rVert =\lVert Ax\rVert$ for all $x\in \mathcal{D}(A)$;
\item\label{it:pi-domain2} $\mathcal{D}(AS^{-1/2})=\mathcal{D}(S^{-1/2})$ and $\lVert AS^{-1/2}x\rVert =\lVert (I-P) x\rVert$ for all $x\in \mathcal{D}(AS^{-1/2})$;
\item\label{it:pi-ker} $\mathcal{N} (S) = \mathcal{N} (S^{1/2})$.
\end{enumerate}
\end{lemma}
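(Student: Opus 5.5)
The three items rest on two tools: the multiplication-operator picture of $S=A^*A$ from the proof of Theorem~\ref{thm:svd1}, transported through Theorem~\ref{thm:condensed-mult}, and the functional calculus rules of Lemma~\ref{lem:fc}.

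\textbf{Item (i).} Work with the condensed decomposition $A=UT_\sigma V^*$, where $\mathcal{D}(A)=\{x: V^*x\in\mathcal{D}(T_\sigma)\}$. From it we get $A^*A = VT_{\sigma^2}V^*$ on $\mathcal{N}(A)^\perp$ and $A^*A=0$ on $\mathcal{N}(A)$. Consequently the spectral measure $\mu$ of $S$ satisfies, for $x\in\mathcal{H}$,
\[
\int_\R \lvert f(\lambda)\rvert^2\,d\mu_x(\lambda)=\int_M \sigma^2\lvert V^*x\rvert^2\,d\mu,
\]
because the kernel component is carried by $\mu(\{0\})$, where $f=0$. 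The left side is finite exactly when $x\in\mathcal{D}(S^{1/2})$, and the right side is finite exactly when $V^*x\in\mathcal{D}(T_\sigma)$, i.e.\ $x\in\mathcal{D}(A)$. So the domains agree, and since $U$ is an isometry both sides also equal $\lVert Ax\rVert^2$.

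To avoid identifying spectral measures across the two pictures, I would instead use the standard route. For $x\in\mathcal{D}(S)$ we have $\lVert Ax\rVert^2=\langle Sx,x\rangle=\int\lambda\,d\mu_x=\lVert S^{1/2}x\rVert^2$. Then $\mathcal{D}(S)$ is a core for both $A$ (Lemma~\ref{lem:dense}) and $S^{1/2}$ (functional calculus, via truncations $\1_{[0,k]}(S)x$). Both operators are closed, and their graph norms coincide on this common core, so their closures over it, namely $A$ and $S^{1/2}$, have the same domain and the norm identity holds there. This core/closure step is the main technical point.

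\textbf{Item (ii).} Take $x\in\mathcal{D}(S^{-1/2})$ and let $y=S^{-1/2}x$. By Lemma~\ref{lem:fc}\ref{it:fc-iii}, $y\in\mathcal{D}(S^{1/2})$ iff $x\in\mathcal{D}(fg(S))$. Here $fg=\1_{(0,\infty)}$ is bounded, so $fg(S)=\mu((0,\infty))=I-P$ has full domain. Hence $y\in\mathcal{D}(S^{1/2})=\mathcal{D}(A)$ by (i), which gives $\mathcal{D}(AS^{-1/2})=\mathcal{D}(S^{-1/2})$. Applying (i) once more,
\[
\lVert AS^{-1/2}x\rVert=\lVert S^{1/2}S^{-1/2}x\rVert=\lVert \1_{(0,\infty)}(S)x\rVert=\lVert (I-P)x\rVert.
\]
This uses $\mu(\{0\})=P$, the projection onto $\mathcal{N}(S)$, together with the fact that $\mu$ is supported in $[0,\infty)$ since $S\ge0$.

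\textbf{Item (iii).} For $x\in\mathcal{D}(S^{1/2})$, Lemma~\ref{lem:fc}(i) gives $\lVert S^{1/2}x\rVert^2=\int\lambda\,d\mu_x$. This vanishes iff $\mu_x$ is concentrated on $\{0\}$, which in turn is equivalent to $\mu(\{0\})x=x$. The same criterion characterizes $\lVert Sx\rVert^2=\int\lambda^2\,d\mu_x=0$. Therefore both kernels equal $\mathcal{R}(\mu(\{0\}))$.
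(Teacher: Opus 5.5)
Your proof is correct. Item (ii) follows the paper's argument. You rightly observe that Lemma~\ref{lem:fc}\ref{it:fc-iii} with the bounded function $fg=\mathbbm{1}_{(0,\infty)}$ already gives $S^{-1/2}x\in\mathcal{D}(S^{1/2})$, so the paper's extra check via Lemma~\ref{lem:fc}\ref{it:fc-cm} is not needed. You also make explicit that $\mu$ is supported in $[0,\infty)$, which the paper uses tacitly when it writes $\mathbbm{1}_{(0,\infty)}(S)=I-\mu(\{0\})$.

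The real differences are in (i) and (iii). For (i), the paper simply cites Kato's Theorem~2.23, whereas you prove it. You combine the identity $\lVert Ax\rVert^2=\langle Sx,x\rangle=\lVert S^{1/2}x\rVert^2$ on $\mathcal{D}(S)$ with the fact that $\mathcal{D}(S)$ is a core for $A$ (Lemma~\ref{lem:dense}) and for $S^{1/2}$ (truncations $\mu([0,k])x$ together with Lemma~\ref{lem:fc}\ref{it:fc-iv}). Two closed operators whose graph norms agree on a common core must then have the same domain and the same norms. That is exactly the mechanism behind Kato's result, so your version makes the lemma self-contained using only tools already in the paper. Setting aside your first route through Theorem~\ref{thm:condensed-mult} was sensible: it is not circular, but it would require identifying $\mu$ with the multiplication-operator picture.

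For (iii), the paper argues operator-theoretically from the self-adjointness of $S^{1/2}$ and the product rule $S^{1/2}S^{1/2}=S$. You instead read both kernels off the spectral measure as $\mathcal{R}(\mu(\{0\}))$. The paper's version is shorter. Yours has the side benefit of actually proving the identification $\mu(\{0\})=P$ on which (ii) relies.
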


\begin{proof}
Item \ref{it:pi-domain1} follows from \cite[Theorem~2.23]{Kato1976}. Since $(fg)(\lambda) = \mathbbm{1}_{(0,\infty)}(\lambda)$, $(fg)(S)=I-\mu(\{0\})=I-P$ and $\mathcal D(S^{1/2}S^{-1/2})=\mathcal D(S^{-1/2})$ by Lemma~\ref{lem:fc}\ref{it:fc-iii}. Hence, $S^{1/2}S^{-1/2}=I-P$ on $\mathcal D(S^{-1/2})$. By~\ref{it:pi-domain1}, 
\[
\mathcal D(AS^{-1/2})
=\{x\in \mathcal D(S^{-1/2}): S^{-1/2}x\in \mathcal D(A)\}=\{x\in \mathcal D(S^{-1/2}): S^{-1/2}x\in \mathcal D(S^{1/2})\}.
\]
If $x \in \mathcal{D}(S^{-1/2})$, then $S^{-1/2}x \in \mathcal{D}(S^{1/2})$ by Lemma~\ref{lem:fc}~\ref{it:fc-cm} since 
\[ 
\int \lambda \,d\mu_{S^{-1/2}x} (\lambda)= \int \lambda (\lambda^{-1})\,d\mu_x(\lambda) = \int_{(0,\infty)} 1\,d\mu_x(\lambda) \le \lVert x\rVert^2 < \infty.
\]
Thus, $S^{-1/2}x \in \mathcal{D}(S^{1/2})$ for all $x \in \mathcal{D}(S^{-1/2})$. Hence, $\mathcal{D}(AS^{-1/2}) = \mathcal{D}(S^{-1/2})$. 

For $x\in \mathcal D(AS^{-1/2})$, by \ref{it:pi-domain1},
\[
\lVert AS^{-1/2}x\rVert =\lVert S^{1/2}S^{-1/2}x\rVert =\lVert (I-P) x\rVert. 
\]
By \ref{it:fc-sa} and \ref{it:fc-iii} of Lemma~\ref{lem:fc}, $S^{1/2}$ is self-adjoint, and for $x\in \mathcal{N}(S)$ we have  $\lVert S^{1/2}x\rVert^2 = \langle S^{1/2}x, S^{1/2}x\rangle = \langle S x,x\rangle=0$, therefore $S^{1/2}x=0$, i.e. $x\in\mathcal{N}(S^{1/2})$. If $x\in \mathcal{N} (S^{1/2})$, by Lemma~\ref{lem:fc}\ref{it:fc-iii} again, $Sx=S^{1/2}S^{1/2}x=0$, therefore $\mathcal{N} (S)=\mathcal{N} (S^{1/2})$.
\end{proof}

\begin{proposition}\label{prop:ovm}
Let $\mu$ be a spectral measure on $\R$ with Hilbert space $\mathcal{H}$, and let $Q: \mathcal{H} \to \mathcal{K}$ be a fixed partial isometry. Then
\begin{equation}\label{eq:ovm}
    \nu(E) \coloneqq  Q\mu(E), \quad \text{for Borel subsets } E \subseteq \R 
\end{equation}
defines an operator-valued measure, i.e.
\begin{enumerate}[(i)]
    \item\label{it:ovm-i} $\nu(\varnothing)=0$ and $\nu(\R)=Q$;
    \item\label{it:ovm-ii} For any disjoint Borel subsets $\{E_k\}_{k=1}^\infty$,
    \[
    \nu \biggl(\bigcup_{k=1}^\infty E_k \biggr)x =\sum_{k=1}^\infty\nu(E_k)x, \quad \text{for all } x \in \mathcal{H}.
    \]
\end{enumerate}
\end{proposition}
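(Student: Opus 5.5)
The plan is to verify both items directly from the corresponding properties of the spectral measure $\mu$, using only that $Q$ is a bounded linear operator. Being a partial isometry plays no role beyond giving $\lVert Q\rVert\le 1$.

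For item (i), recall that a spectral measure satisfies $\mu(\R)=I$. Moreover, $\mu(\varnothing)=0$. This follows from countable additivity \eqref{eq:count-add}: take $E_k=\varnothing$ for all $k$, so that $\mu(\varnothing)x=\sum_k\mu(\varnothing)x$. The series can only converge if $\mu(\varnothing)x=0$, since otherwise the partial sums $n\,\mu(\varnothing)x$ diverge. Composing with $Q$ then gives $\nu(\varnothing)=Q\cdot 0=0$ and $\nu(\R)=QI=Q$.

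For item (ii), fix disjoint Borel sets $\{E_k\}$ and $x\in\mathcal{H}$. By \eqref{eq:count-add}, the partial sums $S_n x\coloneqq\sum_{k=1}^n\mu(E_k)x$ converge in norm to $\mu\bigl(\bigcup_k E_k\bigr)x$. Since $Q$ is bounded, hence continuous and linear,
\[
Q\mu\Bigl(\bigcup_{k=1}^\infty E_k\Bigr)x=\lim_{n\to\infty}QS_nx=\lim_{n\to\infty}\sum_{k=1}^n Q\mu(E_k)x=\sum_{k=1}^\infty\nu(E_k)x,
\]
where the series on the right converges in $\mathcal{K}$. The only point needing care is this interchange of $Q$ with the infinite sum. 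It is justified precisely by the estimate $\lVert QS_nx-Q\mu(\bigcup_k E_k)x\rVert\le\lVert Q\rVert\,\lVert S_nx-\mu(\bigcup_k E_k)x\rVert\to0$, so there is no real obstacle. I would also remark that for each $y\in\mathcal K$ the scalar set function $E\mapsto\langle\nu(E)x,y\rangle=\mu_{x,Q^*y}(E)$ is a complex measure, which is consistent with calling $\nu$ an operator-valued measure.
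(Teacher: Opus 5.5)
Your proposal is correct and follows the paper's proof: both items come from composing the spectral-measure properties $\mu(\varnothing)=0$, $\mu(\R)=I$, and countable additivity with the bounded operator $Q$, and continuity of $Q$ justifies pulling it through the series. Your derivation of $\mu(\varnothing)=0$ from countable additivity fills in a detail that the paper simply takes for granted.
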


\begin{proof}
   For~\ref{it:ovm-i}, we have $\nu(\varnothing)=Q\mu(\varnothing)=Q0=0$ and $\nu(\R)=Q\mu(\R)=QI=Q$. For~\ref{it:ovm-ii}, since $Q$ is continuous, we have
   \[
   \nu \biggl(\bigcup_{k=1}^\infty E_k \biggr)x = Q\mu \biggl(\bigcup_{k=1}^\infty E_k \biggr)x=Q\sum_{k=1}^\infty\mu(E_k)x=\sum_{k=1}^\infty Q\mu(E_k)x=\sum_{k=1}^\infty\nu(E_k)x.\qedhere
   \]
\end{proof}

Let $S = A^*A$ and $\mu$ its associated spectral measure. By functional calculus, $S^{-1/2}$ is a densely defined operator. Since $\mathcal{N} (S)^\perp$ is a closed subspace, $\mathcal D(S^{-1/2})\cap\mathcal N(S)^\perp=(I-P)\mathcal D(S^{-1/2})$ is dense in $\mathcal N(S)^\perp$. By Lemma~\ref{lem:partial-iso}~\ref{it:pi-domain2}, $AS^{-1/2}$ is an isometry on $\mathcal{D}(S^{-1/2})\cap \mathcal{N} (S)^\perp$. Therefore, it extends to a unique isometry $\widetilde{Q}$ on $\mathcal{N} (S)^\perp$. Consequently, we may extend $AS^{-1/2}$ to a partial isometry 
\begin{equation}\label{eq:q}
  Q:\mathcal{H}\to \mathcal{K}, \quad x \mapsto \widetilde{Q} (I-P) x.
\end{equation}
We define its associated operator-valued measure by \eqref{eq:ovm} as in Proposition~\ref{prop:ovm}.

Let $E_1,\dots,E_n \subseteq \mathsf{\Lambda}(S)$ be pairwise disjoint and $g\coloneqq \sum_{k=1}^n c_k \1_{E_k}:\mathsf{\Lambda}(S)\to \R$ with $c_k \in \R$. Define the integral for simple function $g$ with respect to $\nu$ as
\[
\int_{\R} g(\lambda)\,d\nu(\lambda) \coloneqq \sum_{k=1}^n c_k \nu(E_k) = \sum_{k=1}^n c_k Q\mu(E_k) = Q\sum_{k=1}^n c_k \mu(E_k) = Qg(S).
\]
Therefore, $\int_{\R} g(\lambda)\,d\nu(\lambda)$ is independent of the particular representation of $g$ and hence is well-defined. Moreover, since $g(S)$ is a bounded operator on $\mathcal{H}$ \cite[Lemma~4.11]{Konrad2012}, so is $\int_{\R} g(\lambda)\,d\nu(\lambda)$. 

\begin{proposition}\label{prop:Dnuf}
Let $f:\R\to \R$ be a nonnegative measurable function and $\{f_k\}$ a sequence of simple functions approximating $f$ from below. Define
\[
\mathcal{D}(\nu f)\coloneqq \Big\{x\in\mathcal{H}: \int_{\R} f(\lambda)^2 \, d  \mu_{(I-P)x} (\lambda)<\infty\Big\}.
\]
Then $x\in\mathcal{D}(\nu f)$ if and only if $\{Qf_k(S)x\}$ converges in $\mathcal{K}$. Moreover, for another sequence $\{g_k\}$ of simple functions approximate $f$ from below,
\[
  \lim_{k\to \infty} Qf_k(S)x=\lim_{k\to \infty} Qg_k(S)x  \quad \text{ for all } x \in \mathcal{D}(\nu f).
\]
\end{proposition}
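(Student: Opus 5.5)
The plan is to reduce convergence of $\{Qf_k(S)x\}$ to convergence of $\{f_k(S)(I-P)x\}$ in $\mathcal{H}$, and then settle the latter with the norm identity of Lemma~\ref{lem:fc}.

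\textbf{Step 1: reduction to $\mathcal{N}(S)^\perp$.} Since $Q = \widetilde{Q}(I-P)$ with $\widetilde{Q}$ an isometry on $\mathcal{N}(S)^\perp$, and $P=\mu(\{0\})$ commutes with the bounded operators $f_k(S)$ by Lemma~\ref{lem:fc}\ref{it:fc-iv}, we have
\[
Qf_k(S)x=\widetilde{Q}\,f_k(S)(I-P)x,
\]
where $f_k(S)(I-P)x \in \mathcal{N}(S)^\perp$. Because $\widetilde{Q}$ is isometric, $\{Qf_k(S)x\}$ is Cauchy in $\mathcal{K}$ if and only if $\{f_k(S)y\}$ is Cauchy in $\mathcal{H}$, where $y\coloneqq (I-P)x$. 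So both claims reduce to statements about $y$ and the simple functions $f_k$.

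\textbf{Step 2: forward direction.} Suppose $x \in \mathcal{D}(\nu f)$, i.e.\ $y\in\mathcal{D}(f(S))$. Lemma~\ref{lem:fc}\ref{it:fc-approx} applies, since $0\le f_k\le f$ and $f_k\to f$ pointwise, and gives $f_k(S)y\to f(S)y$. Hence $Qf_k(S)x \to \widetilde{Q}f(S)y$. The same limit arises for any other approximating sequence $\{g_k\}$, which proves the uniqueness statement.

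\textbf{Step 3: converse.} Suppose $\{Qf_k(S)x\}$ converges. Then $\|f_k(S)y\|^2=\int f_k^2\,d\mu_y$ is bounded, by the isometry property together with Lemma~\ref{lem:fc}(i) applied to the bounded operators $f_k(S)$. Since $f_k\uparrow f$, monotone convergence gives $\int f^2\,d\mu_y=\lim_k\int f_k^2\,d\mu_y<\infty$, so $x\in\mathcal{D}(\nu f)$.

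I expect this converse to be the main obstacle. It is the only place where we must read "approximating from below" as monotone increasing, or at least use $f_k\to f$ with Fatou's lemma, which gives the same finiteness conclusion. I would state that interpretation explicitly.
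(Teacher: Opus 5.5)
Your proof is correct and follows essentially the paper's route. Both arguments reduce to $y=(I-P)x$ through the partial isometry $Q$, using the functional calculus for the forward direction and monotone convergence for the converse; the paper uses $QP=0$ and $Q^*Q=I-P$, while you use $Q=\widetilde{Q}(I-P)$ together with commutation with $P$. The only real difference is that you invoke Lemma~\ref{lem:fc}\ref{it:fc-approx} to identify the limit as $\widetilde{Q}f(S)(I-P)x$, which makes independence from the approximating sequence immediate where the paper runs separate dominated-convergence estimates, and your Fatou remark removes the need to assume the $f_k$ increase monotonically.
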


\begin{proof}
Let $x \in \mathcal{D}(\nu f)$. Then 
\[
Qf_k(S)Px=Qf_k(0)Px = f_k(0)QPx = 0,
\]
and therefore by Lemma~\ref{lem:fc}\ref{it:fc-iii}, for $j$, $k \in \N$,
\begin{align*}
 \lVert Q f_k(S)x-Q f_j(S)x\rVert^2 &= \lVert Q f_k(S)(I-P)x-Q f_j(S)(I-P)x\rVert^2 \\ &= \lVert Q (f_k(S)-f_j(S))(I-P)x\rVert^2 \le \lVert Q\rVert^2 \int_{\R} \lvert f_k(\lambda)-f_j(\lambda)\rvert^2 \, d\mu_{(I-P)x}(\lambda).
\end{align*}
Since $\int_{\R} f(\lambda)^2\,d\mu_{(I-P)x}(\lambda) <\infty$, we have $\{Qf_k(S)x\}$ is a Cauchy sequence in $\mathcal{K}$ by Lebesgue's dominated convergence theorem, and hence converges.

Since $Q$ is partial isometry with $\supp Q =\mathcal{N} (S)^\perp$, we have $Q^*Q=(I-P)=\mu(\mathsf{\Lambda}(S)\backslash \{0\})$, hence 
\[
\lVert Qf_k(S)x\rVert^2 =\langle (I-P) f_k(S)x, f_k(S)x \rangle = \lVert (I-P) f_k(S)x\rVert^2 = \lVert f_k(S)(I-P)x\rVert^2.
\]
Suppose now $\{Q f_k(S)x\}$ converges. Then $\{\lVert f_k(S)(I-P)x\rVert^2\}$ converges. Since $\{f_k(\lambda) ^2\}$ converges monotonically to $f(\lambda)^2 $, by Lebesgue's monotone convergence theorem,
\[
\int_{\R} f(\lambda) ^2\,d\mu_{(I-P) x} (\lambda)  = \lim_{k\to\infty} \int_{\R} f_k(\lambda) ^2 \,d \mu_{(I-P) x}(\lambda)  = \lim_{k\to\infty}  \lVert f_k(S)(I-P)x\rVert^2  <\infty.
\]
Therefore $x\in \mathcal{D}(\nu f)$.     

Next, suppose $\{g_k\}$ is another sequence of simple function approximate to $f$ from below, then
\begin{align*}
\lVert Qf_k(S)x-Qg_k(S)x\rVert^2 &= \lVert Q(f_k(S)-g_k(S))(I-P)x\rVert^2 \le \lVert Q\rVert^2 \int_{\R} \lvert f_k(\lambda)-g_k(\lambda)\rvert^2\,d\mu_{(I-P)x}(\lambda)	\\&\le 2\int_{\R} \lvert f_k(\lambda)-f(\lambda) \rvert^2\,d\mu_{(I-P)x}(\lambda)+2\int_{\R} \lvert f(\lambda)-g_k(\lambda)\rvert^2\,d\mu_{(I-P)x}(\lambda)\to 0. \qedhere
\end{align*}
\end{proof}

As a consequence of Proposition~\ref{prop:Dnuf},
\begin{equation}\label{eq:nu-f}
  \int_{\R} f(\lambda)\,d\nu(\lambda)x \coloneqq \lim_{k\to \infty} Qf_k(S)x=\lim_{k\to \infty} Qg_k(S)x   
\end{equation}
defines a linear operator on $\mathcal{D}(\int f(\lambda)\,d\nu)=\mathcal{D}(\nu f)$. Furthermore, if $x\in \mathcal{D}(f(S)) \subseteq  \mathcal{D}(\nu f)$, we have $\lVert f_k(S)x-f(S)x\rVert^2 \to 0$ by Lemma~\ref{lem:fc}\ref{it:fc-approx}, and hence 
\[
\int_{\R} f(\lambda)\,d\nu(\lambda)x=Qf(S)x.
\]

\begin{corollary}[Singular value decomposition in operator-valued measure form]\label{thm:svd-pvm}
Let $A : \mathcal{D}(A) \subseteq  \mathcal{H} \to \mathcal{K}$ be a closed, densely defined linear operator and $\nu$ defined by \eqref{eq:ovm}. Then
\[
A = \int_{[0,\infty)} \lambda^{1/2}  \, d\nu(\lambda).
\]
\end{corollary}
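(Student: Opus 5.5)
We must show two things: the operator $\int_{[0,\infty)}\lambda^{1/2}\,d\nu(\lambda)$ has domain $\mathcal{D}(\nu f)$ with $f(\lambda)=\lambda^{1/2}\1_{[0,\infty)}$, and that this domain equals $\mathcal{D}(A)$; and that the two operators agree there. Throughout we use $S=A^*A$, its spectral measure $\mu$, the partial isometry $Q$ from \eqref{eq:q}, and the fact that $\mu$ is supported in $[0,\infty)$ because $S\ge 0$.

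\emph{Domains.} By Lemma~\ref{lem:partial-iso}\ref{it:pi-domain1}, $\mathcal{D}(A)=\mathcal{D}(S^{1/2})=\{x:\int\lambda\,d\mu_x<\infty\}$. The domain $\mathcal{D}(\nu f)$ is instead defined through $\int\lambda\,d\mu_{(I-P)x}$. Since $P=\mu(\{0\})$, Lemma~\ref{lem:fc}\ref{it:fc-cm} with $f=\1_{\R\setminus\{0\}}$ gives $d\mu_{(I-P)x}=\1_{\R\setminus\{0\}}\,d\mu_x$. Hence
\[
\int\lambda\,d\mu_{(I-P)x}=\int\lambda\,d\mu_x,
\]
because the integrand vanishes at $\lambda=0$. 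So $\mathcal{D}(\nu f)=\mathcal{D}(S^{1/2})=\mathcal{D}(A)$.

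\emph{Action.} For $x\in\mathcal{D}(A)=\mathcal{D}(f(S))$, the remark after Proposition~\ref{prop:Dnuf} gives $\int f\,d\nu\,x=QS^{1/2}x$. It remains to show $QS^{1/2}x=Ax$, which is the main step.

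First take $x\in\mathcal{D}(S)$. Then $S^{1/2}x\in\mathcal{D}(S^{1/2})$, and $y\coloneqq S^{1/2}x$ lies in $\mathcal{N}(S)^\perp$ since $PS^{1/2}=(\1_{\{0\}}f)(S)=0$. I also claim $y\in\mathcal{D}(S^{-1/2})$, because $\int\lambda^{-1}\1_{(0,\infty)}\lambda\,d\mu_x\le\lVert x\rVert^2$. Then $Qy=\widetilde Q y=AS^{-1/2}y$, and by Lemma~\ref{lem:fc}\ref{it:fc-iii}, $S^{-1/2}S^{1/2}x=(I-P)x$. Hence
\[
QS^{1/2}x=A(I-P)x=Ax-APx.
\]
Now $Px\in\mathcal{N}(S)=\mathcal{N}(S^{1/2})$ by Lemma~\ref{lem:partial-iso}\ref{it:pi-ker}, so $\lVert APx\rVert=\lVert S^{1/2}Px\rVert=0$ by Lemma~\ref{lem:partial-iso}\ref{it:pi-domain1}. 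Thus $QS^{1/2}x=Ax$ for $x\in\mathcal{D}(S)$.

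The hardest point is passing from $\mathcal{D}(S)$ to all of $\mathcal{D}(A)$. For this I would use density of $\mathcal{D}(A^*A)$ in $\mathcal{D}(A)$ under the graph norm (Lemma~\ref{lem:dense}\ref{item:dense}). Given $x\in\mathcal{D}(A)$, choose $x_k\in\mathcal{D}(S)$ with $x_k\to x$ and $Ax_k\to Ax$. Then $\lVert S^{1/2}(x_k-x)\rVert=\lVert A(x_k-x)\rVert\to0$. Since $Q$ is bounded, $QS^{1/2}x_k\to QS^{1/2}x$, while $QS^{1/2}x_k=Ax_k\to Ax$. Therefore $QS^{1/2}x=Ax$ on all of $\mathcal{D}(A)$, which completes the equality of operators including domains.
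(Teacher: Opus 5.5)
Your proof is correct and follows the paper's argument. It uses the same domain identification via $d\mu_{(I-P)x}=\1_{\R\setminus\{0\}}\,d\mu_x$ and the same chain $QS^{1/2}x=AS^{-1/2}S^{1/2}x=A(I-P)x=Ax$. The only difference is that the paper runs this chain directly for every $x\in\mathcal{D}(A)=\mathcal{D}(S^{1/2})$, because your bound $\int_{(0,\infty)}\lambda^{-1}\,d\mu_{S^{1/2}x}\le\lVert x\rVert^2$ and the identity $S^{-1/2}S^{1/2}x=(I-P)x$ from Lemma~\ref{lem:fc}\ref{it:fc-iii} never actually use $x\in\mathcal{D}(S)$, so your final graph-norm density step is valid but unnecessary.
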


\begin{proof}
Let $S^{1/2}=\lambda^{1/2}\mathbbm{1}_{[0,\infty)}(S)$ and $S^{-1/2}=\lambda^{-1/2}\mathbbm{1}_{(0,\infty)}(S)$ as in Lemma~\ref{lem:partial-iso}. Then

\begin{align*}
\mathcal{D}\Big(\int_{[0,\infty)} \lambda^{1 / 2} \,d \nu\Big)
&=\Big\{x\in\mathcal{H}: \int_{[0,\infty)} \lambda \, d  \mu_{(I-P)x} (\lambda)<\infty\Big\}=\Big\{x\in\mathcal{H}: \int_{(0,\infty)} \lambda \, d  \mu_{x} (\lambda)<\infty\Big\}\\
&=\Big\{x\in\mathcal{H}: \int_{[0,\infty)} \lambda \, d  \mu_{x} (\lambda) < \infty\Big\}
= \mathcal{D}(S^{1/2})=\mathcal{D}(A).
\end{align*}

For $x\in\mathcal{D}(S^{1/2})=\mathcal{D}(A)\subseteq \mathcal{D}(\int_{[0,\infty)} \lambda^{1/2}\,d\nu(\lambda))$, we have $S^{1/2}x \in \mathcal{D}(S^{-1/2})$ and
\[
\int_{(0,\infty)} \lambda^{-1} \,d\mu_{S^{1/2}x} (\lambda)= \int_{(0,\infty)} \lambda (\lambda^{-1})\,d\mu_x(\lambda) = \int_{(0,\infty)} 1\,d\mu_x(\lambda) \le \lVert x\rVert^2 < \infty
\]
by Lemma~\ref{lem:fc}\ref{it:fc-cm}. By Lemma~\ref{lem:partial-iso}\ref{it:pi-ker}, we have $S^{1/2}w=0$ for all $w\in \mathcal{N} (S)=\mathcal{N}(S^{1/2})$. Therefore, Lemma~\ref{lem:fc}~\ref{it:fc-sa} gives that
\[
\langle S^{1/2}x,w\rangle = \langle x,S^{1/2}w\rangle = \langle x,0\rangle = 0 \Longrightarrow S^{1/2}x\in \mathcal{N} (S)^\perp.
\]
Thus, $S^{1/2} x\in \mathcal{D}(S^{-1/2})\cap \mathcal{N} (S)^\perp$ for all $x\in \mathcal{D}(A)$. Let $Q$ be defined as in \eqref{eq:q}. By Lemma~\ref{lem:fc}
\ref{it:fc-iii} and Proposition~\ref{prop:Dnuf},
\[\int_{[0,\infty)} \lambda^{1/2} \,d\nu(\lambda) x =QS^{1/2}x= A S^{-1/2} S^{1/2} x = A(I-P) x = Ax.
\] 
The last equality holds because $\mathcal{N}(S^{1/2})=\mathcal{N}(A)$ by Lemma~\ref{lem:partial-iso}\ref{it:pi-domain1} and~\ref{it:pi-ker}.
\end{proof}

\begin{remark}
The polar decomposition can be recovered from the singular value decomposition in operator-valued measure form. Let $\{f_k\}_{k=1}^\infty$ be a sequence of simple functions approximating $f(\lambda)=\lambda^{1/2}$ from below, then we obtain
\[
Ax=\int_{[0,\infty)}\lambda^{1/2}\,d\nu(\lambda)x = \lim_{k\to \infty} \int_{\R}f_k(\lambda)\,d\nu(\lambda)x=\lim_{k\to \infty}  Qf_k(S)x=Q\lim_{k\to \infty}f_k(S)x = QS^{1/2}x,
\]

for all $x\in\mathcal{D}(A)$. The equation $A=QS^{1/2}$ is exactly the polar decomposition of $A$ \cite[Theorem~7.2]{Konrad2012}.
\end{remark}

\subsection{Discrete singular values}\label{sec:svd-discrete}

Suppose now that $\mathsf{\Sigma}(A)$ is discrete, which is true if and only if $\mathsf{\Lambda}(A^*A)$ is discrete. Observant readers have probably noticed that the decomposition simplifies massively in all three forms of singular value decomposition if $A$ has countably many singular values. 

\begin{corollary}\label{cor:discrete}
If $\mathsf{\Sigma}(A)$ is discrete, then 
\[
  Ax = \sum_{s \in \mathsf{\Sigma}(A) \setminus \{0\}} \sum_{k=1}^{m_s} s \langle x, v_{s,k} \rangle u_{s,k}
\]
for all $x \in \mathcal{D}(A)$, where $\{v_{s,k}\}_{k=1}^{m_s}$ and $\{u_{s,k}\}_{k=1}^{m_s}$ are orthonormal bases of the right and left singular spaces of $s$, respectively.
\end{corollary}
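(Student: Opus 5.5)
We plan to work from the condensed decomposition $A=UT_\sigma V^*$ of Theorem~\ref{thm:condensed-mult} and exploit that, when $\mathsf{\Sigma}(A)$ is discrete, the multiplication operator $T_\sigma$ splits into eigenspaces. Since $\mathsf{\Sigma}(A)$ is a closed discrete subset of $[0,\infty)$, it is countable and has no finite accumulation point. The essential range of $\sigma$ is contained in $\mathsf{\Sigma}(A)$ and $\sigma>0$ a.e. Hence, up to a null set, $M=\bigsqcup_{s\in\mathsf{\Sigma}(A)\setminus\{0\}} M_s$ with $M_s\coloneqq\sigma^{-1}(\{s\})$. Here we use that the set $\{\sigma\notin \mathsf{\Sigma}(A)\}$ is null, because a countable union of null sets is null (the complement of the essential range is covered by countably many null open intervals by second countability). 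The point $s=0$ carries no mass, since $\sigma>0$ a.e.

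First we decompose $L^2_\mu=\bigoplus_{s>0} L^2(M_s)$ orthogonally. The summand $L^2(M_s)=\{f:f=\1_{M_s}f\}$ is exactly $\mathcal{N}(T_\sigma-sI)$, because $(\sigma-s)f=0$ a.e. iff $f$ vanishes a.e. off $M_s$. So $m_s=\dim L^2(M_s)$. Choose an orthonormal basis $\{e_{s,k}\}_{k=1}^{m_s}$ of each $L^2(M_s)$ and set $v_{s,k}=Ve_{s,k}$, $u_{s,k}=Ue_{s,k}$. Since $U,V$ are isometries, these are orthonormal bases of $V(\mathcal{N}(T_\sigma-sI))$ and $U(\mathcal{N}(T_\sigma-sI))$, which are the right and left singular spaces by definition. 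For arbitrary orthonormal bases of these singular spaces we simply pull them back by $V^*$ or $U^*$, which are unitary onto the range. Moreover $T_\sigma e_{s,k}=se_{s,k}$.

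Next, for $x\in\mathcal{D}(A)$ set $f=V^*x\in\mathcal{D}(T_\sigma)$. We expand $f=\sum_{s,k}\langle f,e_{s,k}\rangle e_{s,k}$ with $\langle f,e_{s,k}\rangle=\langle x,v_{s,k}\rangle$. Then $\sigma f=\sum_s s\,\1_{M_s}f$. The key step, and the only real subtlety, is to justify that $T_\sigma$ can be applied termwise to this possibly infinite sum, since $T_\sigma$ is unbounded. We handle this directly in $L^2_\mu$. The pieces $s\1_{M_s}f$ are mutually orthogonal, and $\sum_s\|s\1_{M_s}f\|^2=\int\sigma^2|f|^2\,d\mu<\infty$. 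So the orthogonal series $\sum_s s\1_{M_s}f$ converges unconditionally in $L^2_\mu$, and by monotone convergence its limit is $\sigma f$. Within each $M_s$ we have $s\1_{M_s}f=\sum_k s\langle x,v_{s,k}\rangle e_{s,k}$, and all the terms $s\langle x,v_{s,k}\rangle e_{s,k}$ are orthonormal-basis components of $\sigma f$. Therefore $T_\sigma V^*x=\sum_{s,k}s\langle x,v_{s,k}\rangle e_{s,k}$ converges unconditionally.

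Finally we apply the bounded operator $U$, which commutes with convergent sums, to obtain $Ax=UT_\sigma V^*x=\sum_{s}\sum_k s\langle x,v_{s,k}\rangle u_{s,k}$. The series converges unconditionally in $\mathcal{K}$, so its ordering is irrelevant. The same argument shows that $x\in\mathcal{D}(A)$ iff $\sum_{s,k}s^2|\langle x,v_{s,k}\rangle|^2<\infty$, though this is not needed for the statement.
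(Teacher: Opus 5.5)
Your proof is correct, but it reaches the result by a somewhat different route than the paper. The paper proves the corollary in one line via the direct-integral form (Theorem~\ref{thm:di}). When $\mathsf{\Sigma}(A)$ is discrete, the measure $\eta$ is purely atomic, so $\int^\oplus_{\mathsf{\Sigma}(A)}\mathcal{H}_s\,d\eta(s)\cong\bigoplus_{\eta(\{s\})>0}\mathcal{H}_s$. Then $\Sigma$ acts as multiplication by $s$ on each summand, and the singular spaces are $U\mathcal{E}_s$ and $V\mathcal{E}_s$ by the corollary that follows that theorem.

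You instead stay in the condensed multiplication form and split $M$ into the level sets $M_s=\sigma^{-1}(\{s\})$. These are exactly the preimages of the atoms of $\eta$, so the underlying decomposition is the same. What your version buys:
\begin{enumerate}
\item It bypasses the standard-Borel reduction and the disintegration of $\mu$ over $\eta$ on which Theorem~\ref{thm:di} rests.
\item It works directly with the definition of singular vectors through $\mathcal{N}(T_\sigma-sI)$.
\item It makes explicit two points the paper leaves tacit: that $\sigma$ is a.e.\ countably valued (for which you in fact only use countability of $\mathsf{\Sigma}(A)$, not discreteness), and that the series converges unconditionally even though $T_\sigma$ is unbounded.
\end{enumerate}
The paper's route buys brevity once Theorem~\ref{thm:di} is available.

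One point to tighten. The formula is false for independently chosen orthonormal bases of the left and right singular spaces. Already for $m_s=1$, replacing $v_{s,1}$ by $e^{i\phi}v_{s,1}$ while keeping $u_{s,1}$ multiplies that term by $e^{-i\phi}$. Your sentence about pulling back arbitrary bases by $V^*$ or $U^*$ should say that only one side may be chosen freely. The other is then forced, e.g.\ $u_{s,k}=UV^*v_{s,k}=s^{-1}Av_{s,k}$, which is the coupling the paper records in \eqref{eq:basis}. Your main construction, with a common $e_{s,k}$, already has this coupling.
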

\begin{proof}
It follows from Theorem~\ref{thm:di} as
\[
\int_{\mathsf{\Sigma}(A)}^\oplus \mathcal{H}_s \, d\eta(s) \cong
\bigoplus_{\substack{s \in \mathsf{\Sigma}(A)\\ \eta(\{s\})>0}} \mathcal{H}_s. \qedhere
\]
\end{proof}
 It is the straightforward generalization of the case that for a matrix $A \in \C^{m \times n}$, its singular value decomposition can be written as the sum of rank-$1$ matrices 
\[A = \sum_{k=1}^{\rank(A)} \sigma_{k} u_k v_k^*.\]

This representation also gives us a convenient way to calculate the left singular vectors from the right singular vectors. Namely, given right singular vectors $\{v_{s,k}\}_{k=1}^{\dim \mathcal{H}_s}$, one may compute $\{u_{s,k}\}_{k=1}^{\dim \mathcal{H}_s}$ by
\begin{equation}\label{eq:basis}
    u_{s, k} = \frac{1}{s}Av_{s,k}, \quad s \in \mathsf{\Sigma}(A)\setminus\{0\},\, k = 1, \dots, m_s.
\end{equation}

\section{Properties of singular value decomposition}\label{sec:properties}
Given a matrix $M \in \C^{m \times n}$, the following entities could all be computed immediately from the (condensed) singular value decomposition of $M = U \Sigma V^*$:
\begin{description}
  \item [rank-$r$ approximation] by Eckart--Young theorem, the best rank-$r$ approximation of $M$ under the operator norm is the matrix $U \Sigma_r V^*$ where $\Sigma_r$ replaces the last $\rank(M)-r$ diagonal entries of $\Sigma$ by zeros;
  \item [singular value decomposition of adjoint] the singular value decomposition of $M^*$ is $M^* = V \Sigma U^*$;
\item [projection onto fundamental subspaces] the projection onto $\mathcal R(M)$ is $UU^*$ and the projection onto the kernel of $M$ is $I-VV^*$;
\item [Moore--Penrose inverse] $M^\dagger = V \Sigma^{-1} U^*$.
\end{description}
In this section, we illustrate how these fundamental properties of the matrix singular value decomposition carries over to singular value decomposition. 

\subsection{Approximation}

Unlike the finite-dimensional case, there are two ways to approximate an unbounded operator: using a bounded operator and using another unbounded operator. Since the sum of an unbounded operator and a bounded operator is always unbounded with the same domain, the approximation error cannot be measured using the operator norm. Nevertheless, we may still construct a sequence of bounded operators that approximates $A$ in the strong sense.

\begin{proposition}[Approximation by bounded operators]\label{prop:approx-bounded}
  Let $A: \mathcal{D}(A) \subseteq \mathcal{H} \to \mathcal{K}$ be a closed, densely defined linear operator such that $A = UT_\sigma V^*$ is the condensed singular value decomposition of $A$. Let $\{c_k\}_{k=1}^\infty$ be a sequence in $[0,\infty)$ such that $c_k\to \infty$. Then for all $k \in \mathbb{N}$, $A_k \coloneqq UT_{\1_{[0,c_k]}\sigma} V^* \in \mathcal{B}(\mathcal{H},
  \mathcal{K})$ and for all $x \in \mathcal{D}(A)$,
  \[
  \lim_{k \to \infty} A_k x = Ax.
  \]
\end{proposition}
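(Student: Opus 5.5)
The plan is to handle boundedness and strong convergence separately, both directly from Theorem~\ref{thm:condensed-mult}.

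\emph{Boundedness.} The function $\sigma_k \coloneqq \1_{[0,c_k]}\circ\sigma\cdot\sigma$ takes values in $[0,c_k]$ almost everywhere. Hence $T_{\sigma_k}$ is defined on all of $L^2_\mu$ and $\lVert T_{\sigma_k}\rVert\le c_k$. Since $U$ and $V$ are isometries, $\lVert U\rVert = \lVert V^*\rVert = 1$. It follows that $A_k = UT_{\sigma_k}V^*$ is everywhere defined on $\mathcal H$ with $\lVert A_k\rVert\le c_k$, so $A_k\in\mathcal{B}(\mathcal H,\mathcal K)$.

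\emph{Strong convergence.} Fix $x\in\mathcal{D}(A)$ and set $f\coloneqq V^*x$. By Theorem~\ref{thm:condensed-mult}, $f\in\mathcal{D}(T_\sigma)$, that is, $\sigma f\in L^2_\mu$, and $Ax = UT_\sigma f$. Using again that $U$ is an isometry,
\[
\lVert A_kx-Ax\rVert_{\mathcal K} = \lVert U(\sigma_k f-\sigma f)\rVert_{\mathcal K} = \lVert \1_{\{\sigma>c_k\}}\sigma f\rVert_{L^2_\mu},
\]
so
\[
\lVert A_kx-Ax\rVert_{\mathcal K}^2 = \int_{\{\sigma>c_k\}}\lvert\sigma f\rvert^2\,d\mu .
\]
The integrand $\1_{\{\sigma>c_k\}}\lvert\sigma f\rvert^2$ is dominated by $\lvert \sigma f\rvert^2\in L^1_\mu$. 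Because $\sigma$ is finite-valued and $c_k\to\infty$, for each point $a\in M$ we have $\sigma(a)\le c_k$ for all large $k$, so the integrand tends to $0$ pointwise. Dominated convergence then gives $\lVert A_kx-Ax\rVert_{\mathcal K}\to 0$.

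There is no serious obstacle. The only subtlety is that $c_k$ need not be monotone, so monotone convergence cannot be used in place of dominated convergence. This is exactly the same tail estimate as in Claim~(iii) of Theorem~\ref{thm:svd1}.
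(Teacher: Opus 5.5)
Your proposal is correct and follows essentially the same route as the paper. Boundedness comes from the boundedness of $\1_{[0,c_k]}\sigma$, and strong convergence from dominated convergence applied to $\int_{\{\sigma>c_k\}}\lvert\sigma V^*x\rvert^2\,d\mu$ after using that $U$ is an isometry; your remark that $c_k$ need not be monotone, so dominated rather than monotone convergence is the right tool, is a useful clarification the paper leaves implicit.
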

\begin{proof}
Since $\1_{[0,c_k]}\sigma$ is bounded, $A_k$ is bounded as composition of bounded operators. By dominated convergence theorem, for $x \in \mathcal{D}(A)$,
\[
\lVert Ax - A_k x \rVert_\mathcal{K}^2 = \lVert T_\sigma V^*x - T_{\1_{[0,c_k]}\sigma} V^*x \rVert_{L^2_\mu }^2 = \int_M \lvert(\sigma - \1_{[0,c_k]}\sigma) V^*x\rvert^2 \,d\mu \to 0.\qedhere
\]
\end{proof}

Consider the closed subspace
\[
\mathcal{H}_c \coloneqq \{ x \in \mathcal{H} : T_{\mathbbm{1}_{(c,\infty)}\sigma} V^*x = 0 \} \subseteq \mathcal{D}(A) \quad \text{where } c>0. 
\]
For any $x\in\mathcal{D}(A)$, let $x=x_0+x_1$ with $x_0\in\mathcal{N}(A)^\perp$, $x_1\in\mathcal{N}(A)$. We obtain that $x_1+VT_{\1_{\{\sigma\le c\}}}V^*x \to x$ in the graph norm $\lVert \,\cdot \, \rVert_A$ as $c\to\infty$ since
\begin{align*}
  \lVert x-(x_1+V \1_{\{\sigma\le c\}} V^*x)\rVert_{\mathcal{H}}^2 &= \int_{\{\sigma>c\}} \lvert V^*x \rvert^2\,d\mu\to 0 \quad \text{and}\\
  \lVert Ax-A(x_1+V \1_{\{\sigma\le c\}} V^*x)\rVert_{\mathcal{K}}^2 &= \int_{\{\sigma>c\}} \sigma^2 \lvert V^*x \rvert^2\,d\mu\to 0.
\end{align*}

Hence, in the topology induced by the graph norm,
\[
\overline{\bigcup_{c>0}\mathcal{H}_c} = \mathcal{D}(A).
\]
Furthermore, if $x \in \mathcal{H}_c$, $Ax = U T_\sigma V^*x = U T_{\mathbbm{1}_{[0, c]}\sigma} V^*x$. Thus, the bounded operator $U T_{\mathbbm{1}_{[0, c]}\sigma} V^*$ recovers $A$ on $\mathcal{H}_c$. 

On the other hand, the singular value decomposition gives us a way to approximate $A$ with another unbounded operator within any desired precision.
\begin{proposition}[Approximation by unbounded operators]\label{prop:approx-unbounded}
  Let $A: \mathcal{D}(A) \subseteq \mathcal{H} \to \mathcal{K}$ be a closed, densely defined linear operator and $A = UT_\sigma V^*$ its condensed singular value decomposition. For $\varepsilon \in [0,\infty)$, $A-UT_{\1_{[\varepsilon,\infty)}\sigma} V^*$ extends uniquely to a bounded operator $E$ such that $\lVert E \rVert \leq \varepsilon$.
\end{proposition}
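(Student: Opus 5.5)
We prove Proposition~\ref{prop:approx-unbounded} by working directly with the condensed singular value decomposition of Theorem~\ref{thm:condensed-mult}. Write $A = UT_\sigma V^*$ and set $B_\varepsilon \coloneqq UT_{\1_{[\varepsilon,\infty)}\sigma}V^*$, where $\1_{[\varepsilon,\infty)}\sigma$ means the function $a\mapsto \1_{[\varepsilon,\infty)}(\sigma(a))\,\sigma(a)$. We take the domain of $B_\varepsilon$ to be $\{x : V^*x \in \mathcal{D}(T_{\1_{[\varepsilon,\infty)}\sigma})\}$.

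The first step is to compare domains. Off the set $\{\sigma\ge\varepsilon\}$, the function $\1_{[\varepsilon,\infty)}\sigma$ vanishes, and on that set it equals $\sigma$. Hence
\[
f\in\mathcal{D}(T_{\1_{[\varepsilon,\infty)}\sigma}) \iff \sigma\1_{\{\sigma\ge\varepsilon\}}f\in L^2_\mu .
\]
On the complementary set $\{\sigma<\varepsilon\}$ we have $\lvert\sigma f\rvert\le\varepsilon\lvert f\rvert$, so this condition is equivalent to $\sigma f\in L^2_\mu$. Therefore $\mathcal{D}(B_\varepsilon)=\mathcal{D}(A)$, and the difference $A-B_\varepsilon$ is defined on all of $\mathcal{D}(A)$, which is dense in $\mathcal{H}$.

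The second step is to compute the difference on $\mathcal{D}(A)$ and bound it. For $x\in\mathcal{D}(A)$, linearity of multiplication on the common domain gives
\[
(A-B_\varepsilon)x = U\bigl(\sigma-\1_{[\varepsilon,\infty)}\sigma\bigr)V^*x = U\,T_{\1_{[0,\varepsilon)}\sigma}V^*x .
\]
The multiplier $\1_{[0,\varepsilon)}\sigma$ is bounded by $\varepsilon$, so $T_{\1_{[0,\varepsilon)}\sigma}$ is an everywhere-defined bounded operator with norm at most $\varepsilon$. We then define
\[
E\coloneqq UT_{\1_{[0,\varepsilon)}\sigma}V^*\in\mathcal{L}(\mathcal{H},\mathcal{K}).
\]
Since $U$ is an isometry and $\lVert V^*\rVert\le1$, we get $\lVert E\rVert\le\varepsilon$, and $E$ agrees with $A-B_\varepsilon$ on $\mathcal{D}(A)$.

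For uniqueness, any bounded extension of $A-B_\varepsilon$ is determined by its values on the dense set $\mathcal{D}(A)$, so it must coincide with $E$.

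The case $\varepsilon=0$ is consistent with this argument. There $\1_{[0,0)}\sigma=0$, so $E=0$; equivalently, $B_0=A$ exactly. This holds because $\sigma\ge0$, so $\1_{[0,\infty)}\sigma=\sigma$.

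The only step needing care is the domain identification in the first step, since without it the difference $A-B_\varepsilon$ might only be defined on a smaller set. That identification is exactly what the splitting into $\{\sigma<\varepsilon\}$ and $\{\sigma\ge\varepsilon\}$ handles.
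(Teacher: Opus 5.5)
Your proof is correct and follows essentially the same route as the paper: you show the difference is defined on all of $\mathcal{D}(A)$, observe that it equals $UT_{\1_{[0,\varepsilon)}\sigma}V^*$ there with norm at most $\varepsilon$, and extend uniquely by density. Your splitting over $\{\sigma<\varepsilon\}$ and $\{\sigma\ge\varepsilon\}$ to get $\mathcal{D}(T_{\1_{[\varepsilon,\infty)}\sigma})=\mathcal{D}(T_\sigma)$ spells out the domain step that the paper handles in one line.
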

\begin{proof}
Since $\sigma \geq \1_{[\varepsilon,\infty)}\sigma$, we have $\mathcal{D}(A-UT_{\1_{[\varepsilon,\infty)}\sigma} V^*)= \mathcal{D}(A)$. In particular,
\[
\sup_{x\in \mathcal{D}(A) \setminus \{0\}}\frac{\lVert (A-UT_{\1_{[\varepsilon,\infty)}\sigma} V^*)x \rVert_\mathcal{K}}{\lVert x \rVert_\mathcal H} = \sup_{x\in \mathcal{D}(A)\setminus \{0\}} \frac{\lVert T_{\1_{[0,\varepsilon)}\sigma}V^*x  \rVert_{L^2_\mu}}{\lVert x \rVert_\mathcal{H}
} \leq \varepsilon.
\]
Therefore, $A-UT_{\1_{[\varepsilon,\infty)}\sigma} V^*$ is bounded on the dense subspace $\mathcal{D}(A) \subseteq \mathcal{H}$ and hence extends uniquely to a bounded operator $E$ such that 
\[
  \lVert E \rVert = \lVert UT_{\1_{[0,\varepsilon)}\sigma} V^* \rVert = \lVert T_{\1_{[0,\varepsilon)}\sigma} \rVert  \leq \varepsilon . \qedhere
\]
\end{proof}

\subsection{Adjoint operator}

The way to compute the singular value decomposition of $A^*$ is most conveniently written in the condensed multiplication operator form and direct integral form as they are already compositions of simple operators. For unbounded operators $A$ and $B$, the complication lies in the fact that $(AB)^*$ and $B^*A^*$ are not necessarily equal. Fortunately, the following lemma from \cite[Theorem~1.7, Theorem~1.8]{Konrad2012} and \cite[Theorem~2.3.4]{seifert2020} covers cases of our concern.

\begin{lemma}\label{lem:adjoint}
Let $\mathcal{H}$, $\mathcal{K}$ and $\mathcal{J}$ be Hilbert spaces, $A \colon \mathcal{D}(A) \subseteq \mathcal{H} \to \mathcal{K}$ be closed and densely defined, $B\in \mathcal{L} (\mathcal{K}, \mathcal{J})$, and $C\in \mathcal{L} (\mathcal{J}, \mathcal{H})$ surjective. Then $(BA)^*=A^*B^*$ and $(AC)^* = C^*A^*$ if $AC$ is densely defined. 
\end{lemma}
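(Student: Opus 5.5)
We prove the two identities separately, both from the definition of the adjoint. Throughout, $\mathcal{D}(BA)=\mathcal{D}(A)$ and $\mathcal{D}(AC)=\{z\in\mathcal{J}: Cz\in\mathcal{D}(A)\}$.

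\emph{The identity $(BA)^*=A^*B^*$.} First we check $A^*B^*\subseteq (BA)^*$. Let $y\in\mathcal{D}(A^*B^*)$, so $B^*y\in\mathcal{D}(A^*)$. Then for $x\in\mathcal{D}(A)$,
\[
\langle y,BAx\rangle=\langle B^*y,Ax\rangle=\langle A^*B^*y,x\rangle ,
\]
hence $y\in\mathcal{D}((BA)^*)$ and $(BA)^*y=A^*B^*y$. Conversely, let $y\in\mathcal{D}((BA)^*)$. The map $x\mapsto\langle B^*y,Ax\rangle=\langle y,BAx\rangle$ is bounded on $\mathcal{D}(A)$ in $\lVert\cdot\rVert_{\mathcal H}$. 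By the definition of $\mathcal{D}(A^*)$, this means $B^*y\in\mathcal{D}(A^*)$, so $y\in\mathcal{D}(A^*B^*)$. Both inclusions together give equality. This direction is the easy one, since boundedness of $B$ moves $B^*$ across freely.

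\emph{The inclusion $C^*A^*\subseteq (AC)^*$.} Assume $AC$ is densely defined, so $(AC)^*$ exists. Let $y\in\mathcal{D}(A^*)$. For $z\in\mathcal{D}(AC)$,
\[
\langle y,ACz\rangle=\langle A^*y,Cz\rangle=\langle C^*A^*y,z\rangle ,
\]
so $y\in\mathcal{D}((AC)^*)$ and $(AC)^*y=C^*A^*y$.

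\emph{The reverse inclusion $\mathcal{D}((AC)^*)\subseteq\mathcal{D}(A^*)$.} This is the main obstacle, and it is where surjectivity of $C$ is used. Let $y\in\mathcal{D}((AC)^*)$, so there is $c\ge0$ with $\lvert\langle y,ACz\rangle\rvert\le c\lVert z\rVert$ for all $z\in\mathcal{D}(AC)$. We must bound $\lvert\langle y,Ax\rangle\rvert$ by a multiple of $\lVert x\rVert$ for $x\in\mathcal{D}(A)$.

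Since $C$ is bounded and surjective, the open mapping theorem gives $\kappa>0$ with the following property: every $x\in\mathcal{H}$ has a preimage $z$ with $Cz=x$ and $\lVert z\rVert\le\kappa\lVert x\rVert$. Concretely, one can take $z$ to be the minimal-norm preimage $C^*(CC^*)^{-1}x$; here $CC^*$ is invertible because $C^*$ is bounded below. If $x\in\mathcal{D}(A)$, this preimage lies in $\mathcal{D}(AC)$ automatically, because membership only requires $Cz=x\in\mathcal{D}(A)$. Therefore
\[
\lvert\langle y,Ax\rangle\rvert=\lvert\langle y,ACz\rangle\rvert\le c\kappa\lVert x\rVert ,
\]
so $y\in\mathcal{D}(A^*)$. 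Combined with the previous inclusion, $(AC)^*=C^*A^*$.

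This reproduces the cited results of \cite{Konrad2012} and \cite{seifert2020}.
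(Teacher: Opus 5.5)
Your proof is correct, but it is not a version of an argument in the paper: the paper gives no proof of this lemma and only cites \cite[Theorem~1.7, Theorem~1.8]{Konrad2012} and \cite[Theorem~2.3.4]{seifert2020}. Your argument is a self-contained replacement for those citations.

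\emph{First identity.} This is the standard textbook argument. Boundedness of $B$ lets $B^*$ pass through the defining inner product, so $y\in\mathcal{D}((BA)^*)$ if and only if $B^*y\in\mathcal{D}(A^*)$.

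\emph{Second identity.} The key input is the open mapping theorem. It supplies norm-controlled preimages $z$ with $Cz=x$ and $\lVert z\rVert\le\kappa\lVert x\rVert$, and this is exactly where surjectivity of $C$ is needed.

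Your argument also yields two facts worth recording.
\begin{enumerate}
\item Closedness of $A$ is never used; only density of $\mathcal{D}(A)$ matters.
\item The hypothesis that $AC$ is densely defined is automatic. Given $z\in\mathcal{J}$ and $x'\in\mathcal{D}(A)$ close to $Cz$, choose $w$ with $Cw=x'-Cz$ and $\lVert w\rVert\le\kappa\lVert x'-Cz\rVert$. Then $z+w\in\mathcal{D}(AC)$ and $z+w$ is close to $z$.
\end{enumerate}

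The explicit formula $C^*(CC^*)^{-1}x$ is fine but unnecessary once you have $\kappa$. If you keep it, note that $C^*$ being bounded below follows from the closed range theorem, or directly from the same $\kappa$: since $C$ maps the unit ball onto a set containing the ball of radius $1/\kappa$, we get $\lVert C^*x\rVert\ge\lVert x\rVert/\kappa$.
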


The singular value decomposition of the adjoint operator follows as a corollary.

\begin{proposition}[Singular value decomposition of adjoint operator]\label{prop:adjoint}
Let $A: \mathcal{D}(A) \subseteq \mathcal{H} \to \mathcal{K}$ be a closed, densely defined linear operator. 
\begin{enumerate}[\normalfont(i)]
  \item \label{it:ad1} If $A= UT_\sigma V^*$ is the condensed singular value decomposition of $A$, then $U^*\mathcal{D}(A^*) = \mathcal{D}(T_\sigma)$, and $A^* = VT_\sigma U^*$.
  \item \label{it:ad2} If $A = U \Sigma V^*$ is the singular value decomposition of $A$ in direct integral form, then $U^*\mathcal{D}(A^*) = \mathcal{D}(\Sigma^*)$, and $A^* = V\Sigma^* U^*$.
\end{enumerate}
\end{proposition}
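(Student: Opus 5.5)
The plan is to compute $(UT_\sigma V^*)^*$ directly with Lemma~\ref{lem:adjoint}. The lemma needs a surjective right factor, and $V^*$ is surjective onto $L^2_\mu$ because $V$ is an isometry, so $V^*V=I$. The left factor $U$ is bounded. The middle factor $T_\sigma$ is self-adjoint because $\sigma$ is real-valued.

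First I record that $A=U(T_\sigma V^*)$ holds as an equality of operators. This includes equality of domains, since Theorem~\ref{thm:condensed-mult} gives $\mathcal{D}(A)=\{x: V^*x\in\mathcal{D}(T_\sigma)\}=\mathcal{D}(T_\sigma V^*)$.

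The operator $T_\sigma V^*$ is densely defined, because its domain equals $\mathcal{D}(A)$. It is also closed. To see this, take $x_n\to x$ with $T_\sigma V^*x_n\to g$. Then $V^*x_n\to V^*x$, and closedness of $T_\sigma$ gives $V^*x\in\mathcal{D}(T_\sigma)$ with $T_\sigma V^*x=g$.

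Now I apply Lemma~\ref{lem:adjoint} twice. With $B=U$ and the closed operator $T_\sigma V^*$, it gives $A^*=(T_\sigma V^*)^*U^*$. With $C=V^*$, which is surjective, and $T_\sigma$ closed and densely defined (Lemma~\ref{lem:dense}\ref{item:dense2} handles density), it gives $(T_\sigma V^*)^*=V T_\sigma^*=VT_\sigma$. Hence $A^*=VT_\sigma U^*$ as operators, and in particular $\mathcal{D}(A^*)=\{y\in\mathcal{K}: U^*y\in\mathcal{D}(T_\sigma)\}$.

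This domain identity gives $U^*\mathcal{D}(A^*)\subseteq\mathcal{D}(T_\sigma)$. For the reverse inclusion, take $f\in\mathcal{D}(T_\sigma)$ and set $y=Uf$. Then $U^*y=f$, so $y\in\mathcal{D}(A^*)$, and therefore $U^*\mathcal{D}(A^*)=\mathcal{D}(T_\sigma)$. This proves part~\ref{it:ad1}.

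The step I expect to need the most care is checking the hypotheses of Lemma~\ref{lem:adjoint} in exactly the form stated. The identity $(BA)^*=A^*B^*$ for bounded $B$ is in general only valid for an arbitrary bounded $B$ (it is $A^*B^*\subseteq(BA)^*$ that holds automatically). If the cited version requires more, I will argue directly. Given $y$ with $x\mapsto\langle y,UT_\sigma V^*x\rangle$ bounded on $\mathcal{D}(A)$, I rewrite it as $\langle U^*y,T_\sigma V^*x\rangle$. As $x$ ranges over $\mathcal{D}(A)$, the vector $V^*x$ ranges over all of $\mathcal{D}(T_\sigma)$, because $f=V^*(Vf)$ and $Vf\in\mathcal{D}(A)$. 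I must also check that boundedness in $\lVert x\rVert$ passes to boundedness in $\lVert V^*x\rVert$. For this I restrict to $x=Vf$, where $\lVert x\rVert=\lVert f\rVert$. Self-adjointness of $T_\sigma$ then gives $U^*y\in\mathcal{D}(T_\sigma)$.

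Part~\ref{it:ad2} follows by the same argument. In the proof of Theorem~\ref{thm:di}, the factors are $\Sigma=WT_\sigma W^*$ with $W$ unitary, $U=U_0W^*$ and $V=V_0W^*$. So $\Sigma$ is self-adjoint with $\Sigma^*=\Sigma=WT_\sigma W^*$. Since $V^*$ is surjective, the argument above applies verbatim with $T_\sigma$ replaced by $\Sigma$. Alternatively, I can conjugate part~\ref{it:ad1} by $W$.
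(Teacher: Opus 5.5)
Your proposal is correct and follows the paper's proof: you apply Lemma~\ref{lem:adjoint} twice, first with $C=V^*$ (surjective because $V^*V=I$) to get $(T_\sigma V^*)^*=VT_\sigma$, then with $B=U$, and you handle part (ii) verbatim; beyond the paper, you check closedness of $T_\sigma V^*$ and derive $U^*\mathcal{D}(A^*)=\mathcal{D}(T_\sigma)$ explicitly via $y=Uf$, a step the paper leaves implicit. Your hedge about $(BA)^*=A^*B^*$ is unnecessary, since that identity holds with equality for every bounded $B$ and densely defined $A$ (only the right composition, $(AC)^*\supseteq C^*A^*$, needs surjectivity of $C$), although your direct fallback argument is also correct.
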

\begin{proof}
Let $A = U T_\sigma V^*$ be the condensed singular value decomposition of $A$. By Theorem~\ref{thm:condensed-mult}, $\mathcal{D}(T_\sigma V^*) = \mathcal{D}(A)$. Therefore, $T_\sigma V^*$ is also densely defined. By Lemma~\ref{lem:adjoint},
\[ (T_\sigma V^*)^* = (V^*)^* T_\sigma^* = V T_\sigma.\]
Applying Lemma~\ref{lem:adjoint} again,
\[(U (T_\sigma V^*))^* = (T_\sigma V^*)^* U^* = V T_\sigma U^*. \]
A verbatim argument proves \ref{it:ad2}.
\end{proof}

\subsection{Orthogonal projections}
Comparing with the finite-dimensional case, the projection onto range and kernel involves one subtlety: The range of an unbounded operator is not necessarily closed. Consequently, the orthogonal projection we obtain is onto the closure of $\mathcal{R}(A)$.
\begin{proposition}[Orthogonal projection onto fundamental subspaces]\label{prop:proj}
Let $A: \mathcal{D}(A) \subseteq \mathcal{H} \to \mathcal{K}$ be a closed, densely defined unbounded operator and $A =U T_\sigma V^*$ be the condensed singular value decomposition of $A$. Then
\[
\range(U) = \overline{\range(A)}, \quad \mathcal{N}(V^*) = \mathcal{N}(A), \quad VV^* = P_{\mathcal{N}(A)^\perp}, \quad UU^* = P_{\overline{\range(A)}}.
\]
\end{proposition}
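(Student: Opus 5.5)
The plan is to establish the four identities in the order $\mathcal{N}(V^*) = \mathcal{N}(A)$, then $VV^* = P_{\mathcal{N}(A)^\perp}$, then $\range(U) = \overline{\range(A)}$, and finally $UU^* = P_{\overline{\range(A)}}$. Throughout I would use the condensed decomposition of Theorem~\ref{thm:condensed-mult} and the fact that $U$ and $V$ are isometries. For an isometry $W$, the product $WW^*$ is the orthogonal projection onto the closed subspace $\range(W)$, and $\mathcal{N}(W^*) = \range(W)^\perp$. So the two projection statements reduce to identifying $\range(V) = \mathcal{N}(A)^\perp$ and $\range(U) = \overline{\range(A)}$.

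For the kernel, I would start from Theorem~\ref{thm:condensed-mult}, which gives $Ax = UT_\sigma V^*x$ for $x\in\mathcal{D}(A)$. If $x\in\mathcal{N}(V^*)$, then $V^*x = 0\in\mathcal{D}(T_\sigma)$, so $x\in\mathcal{D}(A)$ and $Ax = 0$. Conversely, suppose $Ax = 0$. Since $U$ is injective, $\sigma V^*x = 0$ $\mu$-a.e. Because $\sigma>0$ almost everywhere, this forces $V^*x = 0$. Hence $\mathcal{N}(V^*) = \mathcal{N}(A)$. Taking orthogonal complements gives $\range(V) = \overline{\range(V)} = \mathcal{N}(V^*)^\perp = \mathcal{N}(A)^\perp$, where the range is closed because $V$ is an isometry. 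Therefore $VV^* = P_{\mathcal{N}(A)^\perp}$.

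For the range, the inclusion $\range(A) \subseteq \range(U)$ is immediate, and $\range(U)$ is closed, so $\overline{\range(A)} \subseteq \range(U)$. For the reverse inclusion, I need $\range(T_\sigma V^*|_{\mathcal{D}(A)})$ to be dense in $L^2_\mu$. By Claim~(iii) in the proof of Theorem~\ref{thm:svd1}, as transported through the proof of Theorem~\ref{thm:condensed-mult}, $V^*$ maps $\mathcal{D}(A)$ onto $\mathcal{D}(T_\sigma)$. Indeed, for $f\in\mathcal{D}(T_\sigma)$ we have $x = Vf\in\mathcal{D}(A)$ with $V^*x = f$, and this uses only $V^*V = I$ and the domain characterization. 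So it suffices to show that $\range(T_\sigma)$ is dense in $L^2_\mu$. I would show this using the sets $G_k = \{1/k\le\sigma\le k\}$ from the proof of Theorem~\ref{thm:svd1}. For $g\in L^2_\mu$, set $f = g\1_{G_k}/\sigma$. Then $f\in\mathcal{D}(T_\sigma)$ and $T_\sigma f = g\1_{G_k}$, which converges to $g$ because $G_k\uparrow M$ up to a null set, using $\sigma>0$ a.e. Alternatively, one can argue that $\range(T_\sigma)^\perp = \mathcal{N}(T_\sigma^*) = \mathcal{N}(T_\sigma) = \{0\}$. Applying the bounded operator $U$ gives $\range(U) = U\,\overline{\range(T_\sigma)} \subseteq \overline{U\range(T_\sigma)} = \overline{\range(A)}$.

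The main obstacle is the density step. It is the only place where positivity of $\sigma$ and the exact domain identification $V^*\mathcal{D}(A) = \mathcal{D}(T_\sigma)$ are both needed. I would make sure to justify the latter directly from the "$\mathcal{D}(A) = \{x : V^*x\in\mathcal{D}(T_\sigma)\}$" clause of Theorem~\ref{thm:condensed-mult}, together with $V^*V = I$, rather than relying on the internals of the earlier proof. The final projection formula $UU^* = P_{\overline{\range(A)}}$ then follows from the general isometry fact stated at the start.
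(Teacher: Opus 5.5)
Your argument is correct. The range half is essentially the paper's. The paper also gets $\overline{\range(A)}\subseteq\range(U)$ from closedness of $\range(U)$. For the converse it approximates $U^*y$ by $T_\sigma f_n$ with $f_n\in\mathcal{D}(T_\sigma)$, using $Vf_n\in\mathcal{D}(A)$ and $AVf_n=UT_\sigma f_n$. The only difference there is that the paper asserts density of $\range(T_\sigma)$ from injectivity and self-adjointness of $T_\sigma$, while you supply the explicit truncation by $G_k$.

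The genuine difference is the $V$ half, which the paper handles in the opposite order. It reads off $\range(V)=\mathcal{N}(A)^\perp$ from the construction in the proof of Theorem~\ref{thm:condensed-mult}, namely formula \eqref{eq:V}, where $V$ is the unitary $V_0$ onto $\mathcal{N}(A)^\perp$ regarded as a map into $\mathcal{H}$. It then gets $\mathcal{N}(V^*)=\range(V)^\perp=\mathcal{N}(A)$, using that $\mathcal{N}(A)$ is closed because $A$ is.

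You instead prove $\mathcal{N}(V^*)=\mathcal{N}(A)$ directly from the conclusions of the theorem. The domain clause gives one inclusion, and injectivity of $U$ together with $\sigma>0$ a.e.\ gives the other. Only then do you pass to $\range(V)$ by taking orthogonal complements.

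The paper's version is a one-liner given the construction, but it depends on $V$ being that particular operator. Yours uses only the defining properties of a condensed decomposition: isometries $U,V$, $\sigma>0$ a.e., and $\mathcal{D}(A)=\{x: V^*x\in\mathcal{D}(T_\sigma)\}$. So it covers every such decomposition. This matters because they are not unique; for example, one may replace $U,V$ by $UT_u,VT_u$ for any measurable $u$ with $\lvert u\rvert=1$. Your route also yields closedness of $\mathcal{N}(A)$ as a by-product.
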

\begin{proof}
Since an isometry between Hilbert spaces has closed range, we have $\overline{\range(A)}\subseteq \range(U)$ as $Ax = UT_\sigma V^*x \in \range(U)$ for all $ x \in \mathcal{D}(A)$. Conversely, let $y \in \range(U)$. Since $U$ is an isometry from $L^2_\mu$ to a subspace of $\mathcal{K}$, $ U^*y \in L^2_\mu$. Because $\sigma > 0$ almost everywhere, $T_\sigma$ is injective and self-adjoint, so $\range(T_\sigma) \subseteq L^2_\mu$ is dense. Therefore, there exists a sequence $\{f_n\}_{n=1}^\infty \subseteq \mathcal{D}(T_\sigma)$ such that $T_\sigma f_n \to U^*y$. For each $n$, 
\[A(Vf_n) = U T_\sigma V^* (V f_n) = U T_\sigma f_n, \quad 
\text{and } Vf_n \in \mathcal{D}(A).\] 
Since $U$ is bounded, $A(Vf_n) = U(T_\sigma f_n) \to UU^*y = y$. Therefore, $\range(U) = \overline{\range(A)}$. 

By \eqref{eq:V}, $V$ is an isometry onto $\mathcal{N}(A)^\perp$, so $\range(V) = \mathcal{N}(A)^\perp$. Because $A$ is a closed operator, $\mathcal{N}(A)$ is closed, so
\[
  \mathcal{N}(V^*) = \range(V)^\perp = (\mathcal{N}(A)^\perp)^\perp = \mathcal{N}(A).
\] 

Lastly, since $V$ is an isometry, $VV^*$ is the orthogonal projection onto its range, $\range(V)$. Substituting $\range(V) = \mathcal{N}(A)^\perp$ yields $VV^* = P_{\mathcal{N}(A)^\perp}$. Similarly, $UU^* = P_{\range(U)} = P_{\overline{\range(A)}}$.
\end{proof}

\subsection{Moore--Penrose Inverse}
The theory of generalized inverse has been extended to unbounded operators \cite{Tseng1949,BIC1963} and is characterized by the following result \cite[Theorem~6]{BIC1963}.

\begin{proposition}\label{prop:pinv}
Let $A: \mathcal{D}(A) \subseteq \mathcal{H} \to \mathcal{K}$ be densely defined. Then there exists a unique operator $A^\dagger : \range(A) \oplus \range(A)^\perp \subseteq \mathcal{K} \to \mathcal{H}$ satisfying
\begin{enumerate}[\normalfont(i)]
    \item $\range(A^\dagger) \subseteq \mathcal{D}(A)$, 
    \label{cond:1}
    \item $\mathcal{N}(A^\dagger) = \range(A)^\perp$, \label{cond:2}
    \item $AA^\dagger = P_{\overline {\range(A)}}\vert_{\mathcal D(A^\dagger)}$, \label{cond:3}
    \item $A^\dagger A =P_{\overline{\range(A^\dagger)}}\lvert_{\mathcal D(A)}$, \label{cond:4}
\end{enumerate}
if and only if 
\begin{equation}\label{eq:existence}
P_{\overline{\mathcal{N}(A)}}\mathcal{D}(A)  \subseteq \mathcal{N}(A). 
\end{equation}
\end{proposition}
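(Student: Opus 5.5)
We prove both directions directly, for a densely defined (not necessarily closed) $A$. Throughout, write $\mathcal{N}=\mathcal{N}(A)$, $\mathcal{D}(A^\dagger)=\range(A)\oplus\range(A)^\perp$, and $N\coloneqq\overline{\mathcal{N}}$.

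\emph{Sufficiency.} Assume \eqref{eq:existence}. For $x\in\mathcal{D}(A)$ we have $P_N x\in\mathcal{N}\subseteq\mathcal{D}(A)$, so $x-P_Nx\in\mathcal{D}(A)\cap N^\perp$. Hence $\mathcal{D}(A)=\mathcal{N}\oplus(\mathcal{D}(A)\cap N^\perp)$, and $\mathcal{N}=N$ is closed, because $N=P_N\mathcal{D}(A)$ by density of $\mathcal{D}(A)$.

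Let $A_0\coloneqq A|_{\mathcal{D}(A)\cap \mathcal{N}^\perp}$. It is injective and has range $\range(A)$. Define
\[
A^\dagger(y+z)\coloneqq A_0^{-1}y \quad\text{for } y\in\range(A),\ z\in\range(A)^\perp .
\]
Conditions \ref{cond:1} and \ref{cond:2} hold by construction. For \ref{cond:3}, we get $AA^\dagger(y+z)=y=P_{\overline{\range(A)}}(y+z)$.

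For \ref{cond:4}, first note $\range(A^\dagger)=\mathcal{D}(A)\cap\mathcal{N}^\perp$. Its closure is $\mathcal{N}^\perp$: indeed $P_{\mathcal{N}^\perp}\mathcal{D}(A)=\mathcal{D}(A)\cap\mathcal{N}^\perp$, and $P_{\mathcal{N}^\perp}$ maps the dense set $\mathcal{D}(A)$ onto a dense subset of $\mathcal{N}^\perp$. Then, for $x\in\mathcal{D}(A)$,
\[
A^\dagger Ax=A_0^{-1}A_0(x-P_{\mathcal{N}}x)=P_{\mathcal{N}^\perp}x .
\]

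\emph{Uniqueness.} Let $B$ be another operator with \ref{cond:1}--\ref{cond:4}. By \ref{cond:2}, $B$ vanishes on $\range(A)^\perp$, so it suffices to identify $B$ on $\range(A)$. Take $y=Ax$. By \ref{cond:4}, $BAx=P_{\overline{\range(B)}}x$. We show $\overline{\range(B)}=\mathcal{N}^\perp$:
\begin{itemize}
\item[(a)] If $w\in\range(B)\cap\mathcal{N}$, write $w=By'$ with $y'\in\mathcal{D}(B)$. By \ref{cond:3}, $Aw=P_{\overline{\range(A)}}y'$. Since $w\in\mathcal{N}$, this is $0$, so $y'\in\range(A)^\perp$ and hence $w=By'=0$ by \ref{cond:2}.
\item[(b)] Combining (a) with \ref{cond:4}: for $n\in\mathcal{N}$, $0=BAn=P_{\overline{\range(B)}}n$, so $\mathcal{N}\perp\range(B)$, i.e.\ $\overline{\range(B)}\subseteq\mathcal{N}^\perp$.
\item[(c)] Conversely, \ref{cond:4} gives $P_{\overline{\range(B)}}x=x$ for $x\in\mathcal{D}(A)\cap\range(B)^\perp$. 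Such an $x$ lies in both $\overline{\range(B)}$ and its orthogonal complement, so $x=0$. Then density of $\mathcal{D}(A)$ yields equality, $\overline{\range(B)}=\mathcal{N}^\perp$.
\end{itemize}
This is the delicate step: it needs care to avoid assuming closedness of $\mathcal{N}$, and we expect it to be the main obstacle. With $\overline{\range(B)}=\mathcal{N}^\perp$ in hand, $By=P_{\mathcal{N}^\perp}x$, which coincides with the $A^\dagger$ above.

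\emph{Necessity.} Suppose an operator satisfying \ref{cond:1}--\ref{cond:4} exists. For $x\in\mathcal{D}(A)$, \ref{cond:4} gives $A^\dagger Ax=P_{\overline{\range(A^\dagger)}}x\in\range(A^\dagger)\subseteq\mathcal{D}(A)$ by \ref{cond:1}. We also have $A(x-A^\dagger Ax)=Ax-P_{\overline{\range(A)}}Ax=0$ by \ref{cond:3}. Hence $x-P_{\overline{\range(A^\dagger)}}x\in\mathcal{N}$.

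So $\mathcal{D}(A)\subseteq\mathcal{N}+\overline{\range(A^\dagger)}$. Arguing as in (a), $\range(A^\dagger)\cap\mathcal{N}=\{0\}$ and $\range(A^\dagger)\perp\mathcal{N}$, so $P_{\overline{\range(A^\dagger)}}$ annihilates $N$. Therefore
\[
P_N x = x-P_{\overline{\range(A^\dagger)}}x\in\mathcal{N},
\]
which is exactly \eqref{eq:existence}.
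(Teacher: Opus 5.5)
The paper gives no proof of this proposition (it is quoted from \cite[Theorem~6]{BIC1963}), so your argument has to stand on its own. Your construction of $A^\dagger$ and your necessity argument are correct. The uniqueness proof, however, has a genuine gap at step (c).

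You claim that (iv) gives $P_{\overline{\range(B)}}x=x$ for $x\in\mathcal{D}(A)\cap\range(B)^\perp$. It gives the opposite: for such $x$ one has $P_{\overline{\range(B)}}x=0$, so (iv) only yields $BAx=0$. Even granting $\mathcal{D}(A)\cap\range(B)^\perp=\{0\}$, density of $\mathcal{D}(A)$ says nothing about $\range(B)^\perp$. A dense subspace can meet a nonzero closed subspace trivially, e.g.\ $c_{00}$ and the line through $(2^{-k})_{k}$ in $\ell^2$. Moreover, what you actually need is $\range(B)^\perp\subseteq\overline{\mathcal{N}(A)}$, not $\range(B)^\perp=\{0\}$. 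So $\mathcal{N}(A)^\perp\subseteq\overline{\range(B)}$ is never established, and without it $BAx=P_{\overline{\range(B)}}x$ does not determine $B$.

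The repair is already in your necessity paragraph. For $x\in\mathcal{D}(A)$, (i) and (iii) give $x-BAx\in\mathcal{N}(A)$, while (iv) and (b) give $BAx=P_{\overline{\range(B)}}x\in\mathcal{N}(A)^\perp$. Hence $x=(x-BAx)+BAx$ is the orthogonal decomposition of $x$ along $\overline{\mathcal{N}(A)}\oplus\mathcal{N}(A)^\perp$, i.e.\ $P_{\overline{\range(B)}}x=P_{\mathcal{N}(A)^\perp}x$ on the dense set $\mathcal{D}(A)$. Two bounded projections agreeing on a dense set coincide, so $\overline{\range(B)}=\mathcal{N}(A)^\perp$ and $B(Ax)=P_{\mathcal{N}(A)^\perp}x$ for any $B$ satisfying (i)--(iv). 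Incidentally, (b) does not need (a), since $BAn=B0=0$ directly. Likewise, in the necessity paragraph the orthogonality $\range(A^\dagger)\perp\mathcal{N}(A)$ comes from (b), not (a).

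Separately, in the sufficiency part you assert that \eqref{eq:existence} forces $\mathcal{N}(A)$ to be closed ``by density of $\mathcal{D}(A)$''. Density only shows that $P_{\overline{\mathcal{N}(A)}}\mathcal{D}(A)$ is dense in $\overline{\mathcal{N}(A)}$, and the claim is false for non-closed $A$. On $\ell^2\oplus\ell^2$ take $\mathcal{D}(A)=c_{00}\oplus c_{00}$ and $A(u,v)=v\in\ell^2$. Then $\mathcal{N}(A)=c_{00}\oplus 0$ is not closed, yet $P_{\overline{\mathcal{N}(A)}}\mathcal{D}(A)=c_{00}\oplus 0=\mathcal{N}(A)$.

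Nothing in your construction actually uses this claim. Read $P_{\mathcal{N}}$ as $P_{\overline{\mathcal{N}(A)}}$ and note $\mathcal{N}(A)^\perp=\overline{\mathcal{N}(A)}^\perp$; your verification of (i)--(iv) then goes through unchanged. So simply delete the claim; it also contradicts your later remark that (c) must avoid assuming $\mathcal{N}(A)$ is closed. With these corrections the proof is complete and, unlike the paper's treatment, self-contained.
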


Since equation~\eqref{eq:existence} is necessarily true for closed operators, the singular value decomposition computes $A^\dagger$ by the exact same formula as the finite-dimensional case.
\begin{proposition}[Moore--Penrose inverse]\label{prs:mp-inv}
Let $A: \mathcal{D}(A) \subseteq \mathcal{H} \to \mathcal{K}$ be a closed, densely defined linear operator. If $A = UT_\sigma V^*$ is the condensed singular value decomposition of $A$, then $A^\dagger = V T_{\sigma^{-1}}  U^*$.
\end{proposition}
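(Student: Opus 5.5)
The plan is to use the uniqueness part of Proposition~\ref{prop:pinv}. First I will check that \eqref{eq:existence} holds: since $A$ is closed, $\mathcal N(A)$ is closed, so $P_{\overline{\mathcal N(A)}}\mathcal D(A)=P_{\mathcal N(A)}\mathcal D(A)\subseteq\mathcal N(A)$. Hence $A^\dagger$ exists and is unique. Next I set $B\coloneqq VT_{\sigma^{-1}}U^*$, with natural domain $\mathcal D(B)=\{y\in\mathcal K: U^*y\in\mathcal D(T_{\sigma^{-1}})\}$. It then suffices to show that $\mathcal D(B)=\range(A)\oplus\range(A)^\perp$ and that $B$ satisfies conditions \ref{cond:1}--\ref{cond:4}.

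\textbf{Domain.} By Proposition~\ref{prop:proj}, $\range(A)^\perp=\overline{\range(A)}^\perp=\range(U)^\perp=\mathcal N(U^*)$, and $B$ vanishes there. So $\range(A)^\perp\subseteq\mathcal D(B)$ and $\range(A)^\perp\subseteq\mathcal N(B)$. For $y\in\range(A)$, write $y=Ax=UT_\sigma V^*x$. Then $U^*y=T_\sigma V^*x$, which gives $\sigma^{-1}U^*y=V^*x\in L^2_\mu$, so $y\in\mathcal D(B)$.

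Conversely, take $y\in\mathcal D(B)$ and split $y=y_0+y_1$ with $y_0=UU^*y\in\range(U)$ and $y_1\in\range(U)^\perp$. Set $f=\sigma^{-1}U^*y\in L^2_\mu$. Then $\sigma f=U^*y\in L^2_\mu$, so $f\in\mathcal D(T_\sigma)$, and $x\coloneqq Vf$ lies in $\mathcal D(A)$ by Theorem~\ref{thm:condensed-mult} because $V^*Vf=f$. Moreover $Ax=UT_\sigma f=UU^*y=y_0$, so $y_0\in\range(A)$. This shows $\mathcal D(B)=\range(A)\oplus\range(A)^\perp$.

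\textbf{Conditions \ref{cond:1}--\ref{cond:4}.}
\begin{itemize}
\item[\ref{cond:1}] The computation above shows $By=Vf\in\mathcal D(A)$ for every $y\in\mathcal D(B)$.
\item[\ref{cond:2}] If $By=0$, then $f=V^*By=0$, so $U^*y=0$ and $y\in\range(A)^\perp$. Together with the inclusion already shown, $\mathcal N(B)=\range(A)^\perp$.
\item[\ref{cond:3}] $ABy=UT_\sigma V^*Vf=UU^*y=P_{\overline{\range(A)}}\,y$, using Proposition~\ref{prop:proj}.
\item[\ref{cond:4}] For $x\in\mathcal D(A)$, $BAx=VT_{\sigma^{-1}}U^*UT_\sigma V^*x=VV^*x=P_{\mathcal N(A)^\perp}x$. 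It remains to identify $\overline{\range(B)}$ with $\mathcal N(A)^\perp$. We have $\range(B)\subseteq\range(V)=\mathcal N(A)^\perp$. Conversely, $\range(B)\supseteq BA(\mathcal D(A))=VV^*\mathcal D(A)$, which is dense in $\range(V)$ because $\mathcal D(A)$ is dense and $VV^*$ is bounded and onto $\range(V)$. Hence $\overline{\range(B)}=\mathcal N(A)^\perp$.
\end{itemize}

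By uniqueness in Proposition~\ref{prop:pinv}, $B=A^\dagger$.

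The main obstacle is the domain bookkeeping: one must verify that the natural domain of the composition $VT_{\sigma^{-1}}U^*$ is exactly $\range(A)\oplus\range(A)^\perp$, and not just a set contained in it or containing it. The remaining steps are direct consequences of $U^*U=V^*V=I$ together with Proposition~\ref{prop:proj}.
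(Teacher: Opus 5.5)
Your proof is correct and follows essentially the same route as the paper: both verify conditions (i)--(iv) of Proposition~\ref{prop:pinv} for $VT_{\sigma^{-1}}U^*$ using $U^*U=V^*V=I$ and Proposition~\ref{prop:proj}, and then appeal to uniqueness. The differences are only refinements on your side. You show that the natural domain of the composition is exactly $\range(A)\oplus\range(A)^\perp$, whereas the paper simply defines the operator on that set. You also get density of the range in $\mathcal N(A)^\perp$ from $VV^*\mathcal D(A)$ rather than from $\mathcal D(A)\cap\mathcal N(A)^\perp$.
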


\begin{proof}
Let $y = y_1 + y_2 \in \range(A) \oplus \range(A)^\perp$ where $y_1\in \range(A)$, $y_2 \in \range(A)^\perp$ and $x\in\mathcal{D}(A)$ such that $y_1 = Ax$. By Proposition~\ref{prop:proj}, $y_2 \in \range(A)^\perp = \mathcal{N}(U^*)$. Since $U^* y_1 \in \range(T_\sigma)$,
\begin{equation}\label{eq:range}
    V T_{\sigma^{-1}}  U^*(y_1+y_2) =V T_{\sigma^{-1}} (U^*y_1) + V T_{\sigma^{-1}} (U^*y_2) = V T_{\sigma^{-1}}  U^* y_1
\end{equation}
is well-defined. Therefore, we obtain the unbounded operator $ V T_{\sigma^{-1}}  U^*: \range(A) \oplus \range(A)^\perp \subseteq \mathcal{K} \to \mathcal{H}$.

It remains to check that $V T_{\sigma^{-1}}  U^*$ satisfies conditions~\ref{cond:1}---\ref{cond:4}.  By \eqref{eq:range} and Proposition~\ref{prop:proj}, 
\begin{equation}\label{eq:proj-kernel-perp}
V T_{\sigma^{-1}}  U^*(y_1+y_2) = V T_{\sigma^{-1}}  U^*Ax = V T_{\sigma^{-1}}  U^* UT_\sigma V^* x = VV^*x = P_{\mathcal{N}(A)^\perp}x \in \mathcal{D}(A),
\end{equation}
implying \ref{cond:1}. Condition~\ref{cond:2} holds as $A^\dagger(Ax+y_2) = 0$ if and only if $x \in \mathcal{N}(A)$. Condition~\ref{cond:3} holds by Proposition~\ref{prop:proj} since
\[
A (V T_{\sigma^{-1}}U^*(y_1+y_2)) = A(P_{\mathcal{N}(A)^\perp} x) = A(x - P_{\mathcal{N}(A)}x) = Ax = y_1.
\]
Last, by condition~\ref{cond:1} and equation~\eqref{eq:proj-kernel-perp}, $\range(A^\dagger) \subseteq \mathcal{D}(A)\cap \mathcal{N}(A)^\perp$. For any $x \in \mathcal{D}(A)\cap \mathcal{N}(A)^\perp$, $A^\dagger A x = P_{\mathcal{N}(A)^\perp} x = x\in \range(A^\dagger)$, therefore $\range(A^\dagger) = \mathcal{D}(A)\cap \mathcal{N}(A)^\perp$. Since $\mathcal{D}(A)$ is dense in $\mathcal{H}$ and $\mathcal{N}(A)$ is closed, we have $\overline{\range(A^\dagger)} = \overline{\mathcal{D}(A)\cap \mathcal{N}(A)^\perp}= \mathcal{N}(A)^\perp$, hence equation~\eqref{eq:proj-kernel-perp} gives condition~\ref{cond:4}.
\end{proof}

\section{Singular value decomposition of gradient operators}\label{sec:eg-grad}

We now switch our focus to computing the singular value decomposition of some fundamental operators arising from various scenarios. We begin with arguably the most commonly encountered unbounded operator: the gradient operator $\nabla$. Unlike most Laplacian operators, the gradient operators are usually not self-adjoint. In this section, we compute the singular value decomposition of $\nabla$ over $\R^n$, bounded domains, the manifold $S^2$ and compact Lie groups.
\subsection{Gradient on $\R^n$}\label{sec:grad-Rn} 
We begin with the canonical gradient on Euclidean space:
\[
  \nabla: H^1(\R^n) \subseteq L^2(\R^n) \to L^2(\R^n, \C^n), \quad 
 f \mapsto \begin{bmatrix}
    \partial_1 f\\ \vdots \\ \partial_n f
\end{bmatrix},
\]
where the partial derivatives are defined in the weak sense. It is standard to check that $\nabla$ is a closed operator. Integrating by parts, its adjoint is the negative divergence
\[
 - \operatorname{div} \begin{bmatrix}
    f_1\\ \vdots \\ f_n
\end{bmatrix} = -\nabla \cdot \begin{bmatrix}
    f_1\\ \vdots \\ f_n
\end{bmatrix} = - \sum_{k=1}^n \partial_k f_k, \quad \mathcal{D}(- \operatorname{div}) = \Biggl\{\begin{bmatrix}
    f_1\\ \vdots \\ f_n
\end{bmatrix} \in  L^2(\R^n, \C^n) : - \sum_{k=1}^n \partial_k f_k \in L^2(\R^n) \Biggr\}.\]
Thus, $\nabla^* \nabla = -\Delta: H^2 \subseteq L^2(\R^n) \to L^2(\R^n)$, the negative Laplacian. 

Let $\mathcal{F}: L^2_x(\R^n) \to L^2_\xi(\R^n)$ denote the Fourier transform
\[
\mathcal{F}f(\xi) = \int_{\R^n} f(x)e^{-i2\pi \xi \cdot x} \, dx,
\]
whose adjoint is the inverse Fourier transform $\mathcal{F}^* = \mathcal{F}^{-1}$. For all $f \in H^2(\R^n)$,
\[
-\Delta f = \mathcal{F}^* T_{(2\pi \lvert \xi \rvert)^2} \mathcal{F}f.
\]
Let $\sigma(\xi) = 2\pi \lvert \xi \rvert$, which is nonzero
almost everywhere. Let $\sigma^{-1}(0)=0$. For
$g \in \mathcal{D}(T_{\sigma^{-1}})$,
\[
\nabla\mathcal{F}^* T_{\sigma^{-1}}g
=
\nabla_x
\int_{\R^n}
e^{i2\pi x\cdot\xi}
\frac{g(\xi)}{2\pi\lvert\xi\rvert}\,d\xi.
\]
For $k=1,\dots,n$, the $k$th component is related to the Riesz transform $\mathcal{R}_k$ \cite[Section~5.1]{Grafakos2014} as
\[
(\nabla\mathcal{F}^*T_{\sigma^{-1}}g)_k(x)
=
\int_{\R^n}
e^{i2\pi x\cdot\xi}
\frac{i\xi_k g(\xi)}{\lvert\xi\rvert}\,d\xi
=
-\mathcal R_k\mathcal F^*g(x).
\]
Since $\mathcal{D}(T_{\sigma^{-1}})$ is dense in
$L^2_\xi(\R^n)$ and the multiplier $i\xi/\lvert\xi\rvert$ is unit norm almost everywhere, the operator extends uniquely to the isometry
\[
\mathcal R:
L^2_\xi(\R^n)
\to
L^2_x(\R^n,\C^n),
\quad
g\mapsto
-
\begin{bmatrix}
\mathcal R_1\mathcal F^*g\\
\vdots\\
\mathcal R_n\mathcal F^*g
\end{bmatrix}.
\]
Since $\mathcal{R}_k^* = - \mathcal{R}_k$, 
\[
\mathcal{R}^* \begin{bmatrix}
    f_1 \\
    \vdots\\
    f_n
\end{bmatrix}= \mathcal{F} \sum_{k=1}^n \mathcal{R}_k f_k.
\]
Therefore, for all $f \in H^1(\R^n)$ and $[f_1,
\cdots,f_n]^\tp \in \mathcal{D}(-\operatorname{div})$, 
\[\nabla f = \mathcal{R}T_\sigma \mathcal{F} f, \quad -\operatorname{div}\begin{bmatrix}
    f_1 \\
    \vdots\\
    f_n
\end{bmatrix} = \mathcal{F}^*T_\sigma \mathcal{R}^* \begin{bmatrix}
    f_1 \\
    \vdots\\
    f_n
\end{bmatrix}. \]
When $n=1$, the gradient is simply the first derivative $d/dx$, whose spectrum is the entire imaginary axis $i\R$. On the other hand, its singular values are $\mathsf{\Sigma}(d/dx) = [0,\infty)$. In this case, the Riesz transform $\mathcal{R}_1$ is known as the Hilbert transform \cite[Section~5.1]{Grafakos2014}.

\subsection{Gradient on bounded domain}\label{sec:grad-bounded}
Now, let $\Omega \subseteq \R^n$ be a bounded domain with smooth boundary. The gradient operator $\nabla: H^1_0(\Omega) \subseteq L^2(\Omega) \to L^2(\Omega, \C^n)$ is again the closed operator defined by the formula
\[\nabla f(x) = \begin{bmatrix}
    \partial_1 f(x) \\
    \vdots \\
    \partial_n f(x)
\end{bmatrix},
\] 
with the negative divergence as its adjoint. Hence 
\[
\nabla^* \nabla = -\Delta: H^2(\Omega) \cap H^1_0(\Omega) \subseteq L^2(\Omega) \to L^2(\Omega)
\]
is the Laplacian operator. By the standard result in PDE \cite[Section~6.5.1, Theorem~1]{Evans2010}, the spectrum of the Laplacian is discrete, i.e, $\mathsf{\Lambda}(-\Delta) = \{\lambda_k\}_{k =1}^\infty$, counting multiplicity. Let $\{f_k\}_{k=1}^\infty$ be an eigenbasis corresponding normalized eigenbasis, then by Corollary~\ref{cor:discrete}, 
\[
\nabla = \sum_{k=1}^\infty \sqrt{\lambda_k} \langle \, \cdot, f_k \rangle \frac{1}{\sqrt{\lambda_k}}\nabla f_k.
\]

\subsection{Gradient on manifolds}
Despite the ubiquity of the flat space $\R^n$, the geometric nature of various applications require one to consider gradients over more complicated manifolds. We briefly recall some basic notations and definitions here. Let $\mathcal{M}$ be a compact oriented manifold. A Riemannian metric smoothly assigns an inner product $\langle\, \cdot,\cdot \,\rangle_x$ to the tangent space $T_x \mathcal{M}$ at each point $x \in \mathcal{M}$. The Riemannian metric uniquely determines a volume form $dV$ that induces the measure 
\[
\mu(E) \coloneqq \int_E dV
\]
for any Borel subset $E \subseteq \mathcal{M}$. Consequently, $L^2(\mathcal{M})$, and $H^k(\mathcal{M})$ are defined with respect to $\mu$ for all $k \in \mathbb{N}$. The Riemannian metric also induces on the set of smooth vector fields $\Gamma(T\mathcal{M})$ the inner product
\[
    \langle \mathbf{u} , \mathbf{v} \rangle
    =\int_\mathcal{M} \langle \mathbf{u}(x),\mathbf{v}(x) \rangle_x \,dV, \quad \mathbf{u} , \mathbf{v} \in \Gamma(T\mathcal{M}).
\]
We denote by $L^2(T\mathcal{M})$ the completion of $\Gamma(T\mathcal{M})$ in the topology induced by the inner product. The subspaces $C^k(T\mathcal{M})$ and $H^k(T\mathcal{M})$ are defined similarly. We now consider the two cases when $\mathcal{M}$ is a sphere and a compact Lie group.

\subsection{Gradient on sphere}
Functions over the sphere are ubiquitous in applications, including in acoustics \cite{signal1,signal2}, computer graphics \cite{graphics}, geophysics \cite{geo1,geo2}, and medical imaging \cite{med}. Let $S^2 \subseteq \R^3$ denote the $2$-dimensional unit sphere equipped with the round metric, i.e., the metric induced from the Euclidean metric. For $f \in C^\infty(S^2)$, let $\widetilde{f}$ be any smooth extension of $f$ to a neighborhood of $S^2$ and define
\[
\nabla_{S^2} f(x) =
(I-xx^\tp)\nabla_{\R^3}\widetilde{f}(x),
\quad x \in S^2.
\]
It extends to the closed operator
\[
\nabla_{S^2}: H^1(S^2) \subseteq L^2(S^2) \to L^2(TS^2).
\]
Its adjoint is $- \operatorname{div}_{S^2}$ where for $f \in H^1(S^2)$ and $\mathbf{v}\in\mathcal{D}(\operatorname{div}_{S^2})$, 
\[
\langle\nabla_{S^2}f,\mathbf{v}\rangle_{L^2(TS^2)}
=
-\langle f, \operatorname{div}_{S^2}\mathbf{v} \rangle_{L^2(S^2)}
\]
and
\[
\Delta_{S^2} \coloneqq -\operatorname{div}_{S^2}\nabla_{S^2}
= \nabla_{S^2}^*\nabla_{S^2}: H^2(S^2) \subseteq L^2(S^2) \to L^2(S^2)
\]
is the positive Laplace--Beltrami operator on $S^2$. Its spectrum \cite[Section~3.3]{AH2012} is
$\mathsf{\Lambda}(\Delta_{S^2})= \{ \ell(\ell+1): \ell= 0,1,2,\dots \}$, with eigenspace for $\ell(\ell+1)$
\[
\mathcal{H}_{\ell(\ell+1)}
=\{p|_{S^2}: p(x,y,z) \text{ is a harmonic homogeneous polynomial of degree }\ell \},
\]
known as the spherical harmonics. They are often written in the spherical coordinates $(\theta,\varphi)$. For $m=0,1,\dots,\ell$, consider
\[
p_{\ell,m}(\theta,\varphi)
=\sqrt{\frac{2\ell+1}{4\pi}\frac{(\ell-m)!}{(\ell+m)!}}
P_\ell^m(\cos\theta)e^{im\varphi},
\quad \ell=0,1,2,\dots,
\]
where
\[
P_\ell^m(x)=(-1)^m(1-x^2)^{m/2}\frac{d^m}{dx^m}P_\ell(x),
\quad
P_\ell(x)
=\frac{1}{2^\ell\ell!}\frac{d^\ell}{dx^\ell}(x^2-1)^\ell.
\]
For $m=1,\dots,\ell$, let $p_{\ell,-m}=(-1)^m\overline{p_{\ell,m}}$. By \cite[Theorem~2.9]{AH2012}, $\{p_{\ell,m}\}_{m=-\ell}^\ell$ forms an orthonormal basis of
$\mathcal{H}_{\ell(\ell+1)}$. In other words, the spectral
decomposition of $\Delta_{S^2}$ can be written in the discrete form as
\[
\Delta_{S^2}f
=
\sum_{\ell=0}^\infty
\sum_{m=-\ell}^{\ell}
\ell(\ell+1)
\langle f,p_{\ell,m}\rangle p_{\ell,m},
\quad
f\in H^2(S^2).
\]
Consequently, $\mathsf{\Sigma}(\nabla_{S^2})= \{ \sqrt{\ell(\ell+1)} : \ell=0,1,2,\dots \}$. The function $p_{0,0}$ spans $\mathcal{N}(\nabla_{S^2})$. For $\ell\ge1$ and $m=-\ell,\dots,\ell$, the right singular vectors are $p_{\ell,m}$ and, by \eqref{eq:basis}, the corresponding left singular vectors are
\[
q_{\ell,m}=\frac{1}{\sqrt{\ell(\ell+1)}}
\nabla_{S^2}p_{\ell,m}=
\frac{1}{\sqrt{\ell(\ell+1)}}
\begin{bmatrix}
\partial_\theta p_{\ell,m}\\
\frac{1}{\sin\theta}\partial_\varphi p_{\ell,m}
\end{bmatrix}
=
\frac{1}{\sqrt{\ell(\ell+1)}}
\begin{bmatrix}
\partial_\theta p_{\ell,m}\\
\frac{im}{\sin\theta}p_{\ell,m}
\end{bmatrix}.
\]
Therefore, the singular value decomposition for the spherical gradient and divergence are
respectively
\[
\nabla_{S^2}f
=
\sum_{\ell=1}^\infty \sum_{m=-\ell}^{\ell}
\sqrt{\ell(\ell+1)} \langle f,p_{\ell,m}\rangle_{L^2(S^2)} q_{\ell,m},
\quad
f\in H^1(S^2),
\]
and
\[
\operatorname{div}_{S^2}\mathbf{v}
=
-\sum_{\ell=1}^\infty \sum_{m=-\ell}^{\ell} \sqrt{\ell(\ell+1)} \langle\mathbf{v},q_{\ell,m}\rangle_{L^2(TS^2)} p_{\ell,m},
\quad
\mathbf{v}\in\mathcal{D}(\operatorname{div}_{S^2}).
\]

\subsection{Gradient on compact Lie groups}
Interests in Lie groups, i.e., groups equipped with manifold structure, from the applied point-of-view have increased significantly as they naturally characterize symmetries in the model space or data in optimization and machine learning \cite{TO2020,KT2024,CNN2018}. Among them, compact Lie groups, such as the real orthogonal group $\O(n,\R)$ and the complex unitary group $\U(n)$, are of particular importance as they characterize rotational symmetries. For the sake of concreteness, we focus on subgroups of $\U(n)$, but the same argument extends to other compact Lie groups. 

Let $\G \subseteq \U(n)$ be a compact Lie group. Throughout this subsection, $L^2(\G)$ is taken with respect to the
normalized Haar probability measure on $\G$. Its Lie algebra $\g$ is defined by
\[
\g = \{X \in \C^{n \times n}: \exp(tX) \in \G 
\text{ for all } t \in \R\} \subseteq \{X \in \C^{n \times n}: X^* = -X\}
\]
where $\exp(\cdot)$ is the matrix exponential function. The dimension $d\coloneqq \dim\g$ is the dimension of $\G$. For each $g \in \G$, $g\g = T_g \G$ is the tangent space of $\G$ at $g$. The natural inner product $\langle X,Y\rangle=\tr(Y^*X)$, induces a natural Riemannian metric on $\G$. On the other hand, the Lie algebra elements $X$ acts on $C^\infty(\G)$ as the first-order left-invariant differential operators on $\G$:
\[
(X\cdot f)(g) \coloneqq \frac{d}{dt} f(g\exp(tX))\rvert_{t=0}, \quad f \in C^\infty(\G).
\]
Each $X\in\mathfrak g$ extends continuously from $H^1(\G)$ to $L^2(\G)$. Let $\{X_k\}_{k=1}^d$ be an orthonormal basis of
$\mathfrak g$. Then the operator
\[
\nabla_\G : H^1(\G) \subseteq L^2(\G) \to L^2(\G,\C^d), \quad f \mapsto \begin{bmatrix}
  X_1\cdot f\\
  \vdots\\
  X_d \cdot f
\end{bmatrix}
\]
is closed.

Similar to the previous two examples, the adjoint of $\nabla_\G$ is the negative divergence 
\[
  \nabla_\G^* \begin{bmatrix}
    f_1\\
    \vdots\\
    f_d
  \end{bmatrix} = - \sum_{k=1}^d X_k f_k, \quad \text{with } \mathcal{D}(\nabla_\G^*) = \Biggl\{\begin{bmatrix}
    f_1\\
    \vdots\\
    f_d
  \end{bmatrix}\in L^2(\G,\C^d):\sum_{k=1}^d X_k f_k\in L^2(\G)\Biggr\}.
\]
Therefore, 
\[
  \nabla_\G^* \nabla_\G = \Delta_\G: H^2(\G) \to L^2(\G), \quad f \mapsto -\sum_{k=1}^d X_k^2f,
\]
the Casimir element, which is also the Laplace--Beltrami operator on $\G$.

The spectral decomposition $\Delta_\G$ is obtained via representation theory. Let $\widehat{\G}$ denote the (countable) set of equivalence classes of irreducible representations of $\G$. Abusing the notation slightly, we will simply write $\rho \in \widehat{\G}$ when $\rho$ is a representative from its equivalence class in $\widehat{\G}$.

Given an irreducible representation $\rho: \G \to \U(\mathcal{H}_\rho)$ and $v,w \in \mathcal{H}_\rho$, define the matrix coefficient function
\[
\rho_{v,w}: \G \to \C \quad g \mapsto \langle v, \rho(g) w \rangle \in L^2(\G).
\] 
By \cite[Proposition~10.6]{Hall2015}, there exists a quadratic function $c(\rho)$ on highest weights of $\rho$ such that 
\[
\Delta_\G \rho_{v,w} = c(\rho) \rho_{v,w} \quad \text{ for all } v,w \in \mathcal{H}_\rho.
\]
By Peter--Weyl theorem \cite[Section~5.2]{Folland2016}, $\{\sqrt{d_\rho}\rho_{ij}: i,j = 1,\dots,d_\rho\}$ is an orthonormal basis of $L^2(\G)$ and 
\[
\Delta_\G f = \sum_{\rho \in \widehat{\G}} \sum_{i,j=1}^{d_\rho} c(\rho) d_\rho \langle f , \rho_{ij} \rangle \rho_{ij}.
\]
By Corollary~\ref{cor:discrete}, the right singular vectors of $\nabla_\G$ are $\{\sqrt{d_\rho}\rho_{i,j}: i,j = 1,\dots,d_\rho\, , \rho \in \widehat{\G},\, c(\rho)>0\}$ with singular values $\{\sqrt{c(\rho)}: \rho \in \widehat{\G},\, c(\rho)>0\}$, and left singular vectors 
\[
  \sqrt{\frac{d_\pi}{c(\pi)}} \nabla_\G \pi_{ij} = \sqrt{\frac{d_\pi}{c(\pi)}} \begin{bmatrix}
    X_1 \pi_{ij}\\
    \vdots\\
    X_d \pi_{ij}
  \end{bmatrix}.
\]
The singular value decomposition of $\nabla_\G$ is given by 
\[
\nabla_\G f =  \sum_{\substack{\rho\in\widehat{\G}\\c(\rho)>0}}   \sum_{i,j=1}^{d_\rho} \sqrt{c(\rho)} \langle f , \sqrt{d_\rho}\rho_{ij} \rangle \sqrt{\frac{d_\rho}{c(\rho)}} \nabla_\G \rho_{ij} = \sum_{\substack{\rho\in\widehat{\G}\\c(\rho)>0}}   \sum_{i,j=1}^{d_\rho} d_\rho \langle f , \rho_{ij} \rangle \nabla_\G \rho_{ij}.
\]
The formula further simplifies if we restrict our attention to functions invariant under conjugation, i.e., the spaces
\[
ZL^2(\G) = \{f \in L^2(\G): f(y^{-1}xy) = f(x) \text{ for all } x, y \in \G\}, \quad ZH^1(\G) = H^1(\G) \cap ZL^2(\G).
\]
Such functions naturally arise in various applications \cite{Hielscher2013,Qiu2014}. The characters
$\{\chi_\rho\}_{\rho\in\widehat{\G}}$, where
$\chi_\rho\coloneqq\tr[\rho(\cdot)]$, form an orthonormal basis of
$ZL^2(\G)$, and each $\chi_\rho$ is an eigenfunction of
$\Delta_\G$ with eigenvalue $c(\rho)$
\cite[Proposition~5.23]{Folland2016}. Hence
\[
\Delta_\G\vert_{ZH^2(\G)} f
=
\sum_{\rho \in \widehat{\G}}
c(\rho) \langle f, \chi_\rho \rangle \chi_\rho,
\quad
f \in ZH^2(\G).
\]
Consequently, the singular value decomposition of $\nabla_\G$ is 
\begin{equation}\label{eq:character}
  \nabla_\G \vert_{ZH^1(\G)} f = \sum_{\substack{\rho\in\widehat{\G}\\c(\rho)>0}} \sqrt{c(\rho)} \langle f, \chi_\rho \rangle \frac{\nabla \chi_\rho}{\sqrt{c(\rho)}} = \sum_{\substack{\rho\in\widehat{\G}\\c(\rho)>0}} \langle f, \chi_\rho \rangle \nabla_\G \chi_\rho.
\end{equation}

As a concrete example, we compute the singular value decomposition of the gradient on periodic functions. Consider the $n$-torus 
\[ \G = \U(1)^n \subseteq \U(n) = \Biggl\{e^{i\theta} \coloneqq \begin{bmatrix}
  e^{i\theta_1}\\ \vdots\\ e^{i\theta_n}
  \end{bmatrix}\cong\begin{bmatrix}
e^{i\theta_1}\\&\ddots\\ & &e^{i\theta_n}
\end{bmatrix} \in \U(n):  \theta = \begin{bmatrix}
\theta_1\\ \vdots\\ \theta_n
\end{bmatrix} \in (\R/2\pi\Z)^n\Biggr\},
\] 
Functions on $\G$ are equivalent to $n$-variant periodic functions as
\[
L^2(\G)=ZL^2(\G)
\cong
L^2\biggl([0,2\pi]^n,\frac{d\theta}{(2\pi)^n}\biggr),
\]
where $\widetilde f(\theta)\coloneqq f(e^{i\theta_1},\dots,e^{i\theta_n})$. The Lie algebra is $\mathfrak{g} = \mathfrak{u}(1)^n = i\R^n$. We can hence choose the orthonormal basis $\{X_k = ie_k\}_{k=1}^n$. As differential operators,
\[
(X_k \cdot f)(e^{i\theta}) = \frac{d}{dt} f(e^{i\theta_1},\dots, e^{i\theta_k+ it}, \dots,e^{i\theta_n}) = \frac{d}{dt} \widetilde{f}(\theta_1,\dots, \theta_k + t,\dots,\theta_n) = \frac{\partial}{\partial \theta_k} \widetilde{f}(\theta).\]
Therefore, the gradient is exactly the canonical gradient
\[
\nabla_\G f(e^{i\theta_1},\dots,e^{i\theta_n}) = \begin{bmatrix}
  \frac{\partial\widetilde{f}}{\partial \theta_1}(\theta)\\
  \vdots\\
  \frac{\partial\widetilde{f}}{\partial \theta_n}(\theta)
\end{bmatrix}.
\]
Since $\G$ is Abelian, $\widehat{\G}
=
\{\rho_k:k\in\Z^n\}$ where $\rho_k(e^{i\theta})=e^{ik\cdot\theta}$. Since $\Delta_{\U(1)^n}\rho_k
=
\lvert k \rvert^2\rho_k
$ and $\nabla_\G\rho_k
=
ik\,e^{ik\cdot\theta}$, we obtain from \eqref{eq:character},
\[
\nabla_\G f(e^{i\theta})
=
\sum_{k\in\Z^n\setminus\{0\}}
ik\,\frac{1}{(2\pi)^n}
e^{ik\cdot\theta} \int_{[0,2\pi]^n}
\widetilde f(\tau)e^{-ik\cdot\tau}\,d\tau . 
\]
\subsection{Hilbert complex}\label{sec:complex}
Hilbert complexes arise naturally in geometry and analysis, particularly in the study of differential complexes on Riemannian manifolds \cite{hilbertcomplex,complex2}, and in numerical analysis through finite element exterior calculus \cite{FEEC,Hu2026}.
Let
\[
0 \longrightarrow W^0
\xrightarrow{d^0} W^1
\xrightarrow{d^1} \cdots
\xrightarrow{d^{n-1}} W^n
\longrightarrow 0
\]
be a Hilbert complex, where each $W^k$ is a Hilbert space and, for $k=0,\dots,n-1$, $d^k:\mathcal D(d^k)\subseteq W^k\longrightarrow W^{k+1}$ is a closed and densely defined operator satisfying $\range(d^{k-1})\subseteq\mathcal D(d^k)$ and $d^kd^{k-1}=0$ with $d^{-1} = d^n=0$. For $k=0,\dots,n-1$, let $\delta^k\coloneqq(d^k)^*:\mathcal D(\delta^k)\subseteq W^{k+1}\to W^k$ and let $\delta^{-1} = \delta^n =0$.

For $k=0,\dots,n-1$, let $d^k=U_kT_{\sigma_k}V_k^*$ be the condensed singular value decomposition of $d^k$ in multiplication operator form. Since $d^kd^{k-1}=0$, applying Proposition~\ref{prop:proj} gives, for $k=1,\dots,n-1$,
\begin{equation}\label{eq:proj0}
U_kT_{\sigma_k}V^*_k U_{k-1}T_{\sigma_{k-1}}V^*_{k-1}=0
\Longrightarrow
V_k^*U_{k-1}=0,
\quad\text{and}\quad
\range(d^{k-1})\subseteq\mathcal N(d^k).
\end{equation}

The $k$th cohomology of the Hilbert complex is the quotient space
\[H^k(W)\coloneqq\mathcal N(d^k)/\range(d^{k-1}).\]
Suppose that the Hilbert complex is closed, i.e., $\range(d^k)$ is closed for every $k$. By the closed range theorem, $\range(\delta^k)$ is also closed for every $k$. Then every cohomology class $[x]\in H^k(W)$ contains a unique element $\widehat{x}\in
\bigl(\mathcal N(d^k)\cap\mathcal N(\delta^{k-1})\bigr)\cap[x]$ and hence
\[H^k(W)
\cong
\mathcal N(d^k)\cap\mathcal N(\delta^{k-1})=
\mathcal N(d^k)\cap\range(d^{k-1})^\perp \eqqcolon \mathfrak H^k.\]

By Proposition~\ref{prop:adjoint}, $\delta^k=V_kT_{\sigma_k}U_k^*$, and by Proposition~\ref{prop:proj},
\[
  P_{\range(d^k)}=U_kU_k^*,
  \quad
  P_{\mathcal N(\delta^{k-1})}=I-U_{k-1}U_{k-1}^*,
  \quad
  P_{\range(\delta^k)}=V_kV_k^*,
  \quad
  P_{\mathcal N(d^k)}=I-V_kV_k^*.
\]
Thus, given $[x]\in H^k(W)$ and a representative $x\in\mathcal N(d^k)$, $\widehat{x}
=
x-U_{k-1}U_{k-1}^*x
\in
\mathfrak H^k$. Since $I-U_{k-1}U_{k-1}^*$ is an orthogonal projection, we have
\[
  \widehat{x}
  =
  \argmin_{y\in[x]}\lVert y\rVert_{W^k}.
\]

By \eqref{eq:proj0},
\[
  (I-V_kV_k^*)(I-U_{k-1}U_{k-1}^*)
  =
  I-V_kV_k^*-U_{k-1}U_{k-1}^*
  =
  P_{\mathfrak H^k}.
\]
Using the conventions $U_{-1}U_{-1}^*=0$ and $V_nV_n^*=0$, we recover the Hodge decomposition of the closed Hilbert complex
\[
  W^k
  =
  \range(d^{k-1})
  \oplus
  \mathfrak H^k
  \oplus
  \range(\delta^k).
\]

\subsection{Exterior derivative}\label{sec:derham}

Consider the example of the de Rham complex. Let $\mathcal{M}$ be a $n$-dimensional closed Riemannian oriented manifold. For $k=1,\dots,n$, the Riemannian metric on $\mathcal{M}$
naturally defines a dual metric on $T_x^*\mathcal{M}$ and induces an inner product on $\wedge^k T_x^*\mathcal{M}$, which we continue to denote by $\langle\, \cdot,\cdot \,\rangle_x$. The space of smooth $k$-forms $\Omega^k(\mathcal{M})$ is thus equipped with the inner product
\[
\langle \alpha, \beta \rangle
=
\int_\mathcal{M}
\langle \alpha(x),\beta(x) \rangle_x\,dV,
\quad
\alpha,\beta\in\Omega^{k}(\mathcal M).
\]
Denote the closure of $\Omega^k(\mathcal{M})$ with respect to this inner
product by $L^2(\wedge^k T^*\mathcal{M})$. For $k=0,\dots,n-1$, define the maximal exterior derivative by $d^k:
\mathcal D(d^k)\subseteq L^2(\wedge^kT^*\mathcal M)
\longrightarrow
L^2(\wedge^{k+1}T^*\mathcal M)$ with domain
\[
\mathcal D(d^k)
=
\{
\alpha\in L^2(\wedge^kT^*\mathcal M):
d\alpha\in L^2(\wedge^{k+1}T^*\mathcal M)
\text{ distributionally}
\}.
\]
With $d^n=0$, these operators are closed and densely defined and form a Hilbert complex. For a smooth form, $d^k$ is
given in local coordinates by
\[
d\alpha
=\sum_{l=1}^n
\sum_{j_1<\cdots<j_k}
\frac{\partial\alpha_{j_1\cdots j_k}}{\partial x_l}\;
dx_l\wedge dx_{j_1}\wedge\cdots\wedge dx_{j_k} \quad \text{ for }\alpha
=\sum_{1\le j_1<\cdots<j_k\le n}
\alpha_{j_1\cdots j_k}\;
dx_{j_1}\wedge\cdots\wedge dx_{j_k} .\]
For $k=0,\dots,n-1$, let $\delta^k\coloneqq(d^k)^*:
\mathcal D(\delta^k)\subseteq
L^2(\wedge^{k+1}T^*\mathcal M)
\longrightarrow
L^2(\wedge^kT^*\mathcal M)$ with domain
\[
\mathcal D(\delta^k)
=
\{
\beta\in L^2(\wedge^{k+1}T^*\mathcal M):
\delta\beta\in L^2(\wedge^kT^*\mathcal M)
\text{ distributionally}
\}
\]
and let $\delta^{-1}=0$.

Let $d^k=U_kT_{\sigma_k}V_k^*$ denote the condensed singular value decomposition of $d^k$. By G\aa rding's inequality \cite[Theorem~1.34]{Rosenberg1997} and the Hodge theorem \cite[Theorem~1.30]{Rosenberg1997}, the de Rham Hilbert complex is closed. Moreover, by \cite[Corollary~1.36 and Theorem~1.45]{Rosenberg1997}, the $k$th de Rham cohomology is the quotient
\[
  H^k_{\dR}(\mathcal{M})
  \coloneqq
  (\mathcal N(d^k)\cap\Omega^k(\mathcal{M}))/d\Omega^{k-1}(\mathcal{M}) \cong \mathcal N(d^k)/\range(d^{k-1}).
\]

For a compact $\mathcal{M}$, $\dim H^k_{\dR}(\mathcal{M})<\infty$ is the $k$th Betti number. By Section~\ref{sec:complex}, it can be computed as
\[
  \tr(I-V_kV_k^*-U_{k-1}U_{k-1}^*)
  =
  \rank(I-V_kV_k^*-U_{k-1}U_{k-1}^*)
  =
  \dim H^k_{\dR}(\mathcal M).
\]

By Section~\ref{sec:complex}, for each $[\alpha]\in H^k_{\dR}(\mathcal{M})$, there exists a unique element
\[
  \widehat{\alpha} = \alpha-U_{k-1}U_{k-1}^*\alpha = \argmin_{\beta\in[\alpha]}
  \lVert \beta \rVert_{L^2(\wedge^kT^*\mathcal M)}
  \in
  \bigl(\mathcal N(d^k)\cap\mathcal N(\delta^{k-1})\bigr)\cap[\alpha] = \mathfrak{H}^k,
\]
which is the harmonic $k$-form of $\mathcal{M}$.

Moreover, the singular values of the exterior derivatives provide an alternative formula to compute the analytic torsion, which is another fundamental topological invariant of the manifold. For each $k=0,\dots,n$, let $\{\lambda_{k,j}\}_{j=1}^\infty$ be the positive eigenvalues of the Hodge--Laplacian $\Delta_k = \delta_k d_k + d_{k-1}\delta_{k-1}$. Consider the zeta function
\[
\zeta_k(z) \coloneqq \sum_{j=1}^\infty \lambda_{k,j}^{-z}.
\]
The zeta function is well-defined for
$\Re z$ sufficiently large and admits a meromorphic continuation to $\C$ that is holomorphic at $z=0$. For $z \in \C$ such that the right-hand side converges, 
\[
\frac{d}{dz}\zeta_k(z) = \sum_{j=1}^\infty -\lambda_{k,j}^{-z} \log \lambda_{k,j}.
\]
Thus, $\exp\bigl(-\zeta'_k(0)\bigr)$ assigns a finite value to the divergent product $\lambda_{k,1}\lambda_{k,2}\cdots$ via zeta regularization \cite{Hawking1977}. The analytic torsion is 
\[
T_{\mathcal M} \coloneqq \exp\Biggl(\sum_{k=1}^n\frac{(-1)^k k}{2}\zeta'_k(0)\Biggr).
\]

For $k = 0, \dots, n$, let $\{\sigma_{k,j}\}_{j=1}^\infty$ denote the singular values of $d^k$. By Hodge's theorem, the nonzero eigenvalues of the Hodge--Laplacian is exactly the multiset union $\{\sigma_{k,j}^2\}_{j=1}^\infty \cup \{\sigma_{k-1,j}^2\}_{j=1}^\infty$.

Therefore, for $z$ with sufficiently large real part,
\begin{equation}\label{eq:zeta}
  \zeta_k(z) = Z_k(z) + Z_{k-1}(z), \quad \text{ where } Z_k(z) \coloneqq \sum_{j=1}^\infty \sigma_{k,j}^{-2z}, \text{ for } k=0,\dots,n-1, \, Z_{-1} = Z_{n} \coloneqq 0.
\end{equation}
Since $Z_0=\zeta_0$, and $Z_k=\zeta_k-Z_{k-1}$  for $k=1,\dots,n-1$, each $Z_k$ admits a meromorphic continuation to $\C$ and is holomorphic at $z=0$. Hence \eqref{eq:zeta} continues to hold meromorphically, and we may differentiate it at $z=0$. Thus, 
\[
\log T_{\mathcal M} = \frac{1}{2} \sum_{k=0}^n (-1)^k k \left[ Z'_k(0) + Z'_{k-1}(0) \right].
\]
Expanding the telescoping sum, we obtain that 
\[
T_{\mathcal M} = \exp\Biggl(\sum_{k=0}^{n} \frac{(-1)^{k+1} }{2} Z'_k(0)\Biggr).
\]
In particular, the constant multiplier $k$ has been canceled via the telescoping sum and $\log T_{\mathcal M}$ reduces to an alternating series of $Z'_k(0)$.

\section{Singular value decomposition in numerical computations}\label{sec:eg-num}
In applied settings, one often encounters variants of differential operators arising from discretizations or two-sided actions. In this section, we discuss how singular value decomposition could provide insights into Petrov--Galerkin method, finite difference method, and two-sided actions on Hilbert--Schmidt operators.
\subsection{Petrov--Galerkin method}
The Petrov--Galerkin method is a method to approximate the solution of a PDE in weak formulation \cite[Section~2.4.3]{Reddy2004}. Given an unbounded operator $A: \mathcal{D}(A) \subseteq \mathcal{H} \to \mathcal{K}$ and $g \in \mathcal{K}$, the solution of the problem is $f \in \mathcal{D}(A)$ such that 
\[
  \langle Af, h\rangle = \langle g, h \rangle \quad \text{ for all } h \in \mathcal{K}. 
\]
The Petrov--Galerkin method proposes to solve a finite dimensional surrogate of this problem: Choose a $n$-dimensional subspace $\mathcal{H}_n \subseteq \mathcal{D}(A)$ and a $m$-dimensional subspace $\mathcal{K}_m \subseteq\mathcal K$ and find $f \in \mathcal{H}_n$ such that
\[
  \langle Af, h\rangle = \langle g, h \rangle \quad \text{ for all } h \in \mathcal{K}_m. 
\]
If we have $\{g_1,\dots,g_m\}$ a basis of $\mathcal{K}_m$ and $\{f_1,\dots, f_n\}$ a basis of $\mathcal{H}_n$, then the problem is simply to find the coefficients $c_k$, $k=1,\dots,n$ such that 
\[
\sum_{k=1}^n c_k \langle Af_k, g_l \rangle = \langle g, g_l\rangle, \quad l=1,\dots,m.
\]

As the Petrov--Galerkin method is mostly applied to differential operators with given boundary conditions, assume that $\mathsf{\Sigma}(A)$ is discrete. The singular value decomposition could be written as
\[
  Ax = \sum_{\sigma \in \mathsf{\Sigma}(A) \setminus \{0\}} \sum_{k=1}^{m_\sigma} \sigma \langle x, v_{\sigma,k} \rangle u_{\sigma,k}, \quad x \in \mathcal{D}(A).
\]
Letting $m=n$, the singular value decomposition provides the natural choice $\mathcal{H}_n = \operatorname{span}\{v_{\sigma_1,k_1},\dots, v_{\sigma_n,k_n}\}$ and $\mathcal{K}_n = \operatorname{span}\{u_{\sigma_1,k_1},\dots, u_{\sigma_n,k_n}\}$. In this case,
\[
\sum_{j=1}^n c_j \langle Av_{\sigma_j,k_j}, u_{\sigma_l,k_l} \rangle = c_l \sigma_l = \langle g, u_{\sigma_l,k_l}\rangle , \quad l=1,\dots,n,
\]
so $c_j = \langle g, u_{\sigma_j,k_j}\rangle/ \sigma_j$ for $j=1,\dots,n$.

\subsection{Finite difference operator}
Finite difference method is another classical technique to numerically solve differential equations by approximating the differential operators with finite difference operators. Based on Section~\ref{sec:grad-Rn}, we now show that the finite difference operator approximates the differential operator not only strongly, but also in the sense of the singular value decomposition. Let $0 \neq h \in \R^n$ be fixed. The finite difference operator is defined as the bounded operator 
\[
D_h \in \mathcal{B}(L^2(\R^n)), \quad D_h f(x) = \frac{f(x+h)-f(x)}{\lVert h \rVert}.
\]

Since
\begin{align*}
\int_{\R^n}
\frac{f(x+h)-f(x)}{\lVert h\rVert}
\overline{g(x)}\,dx
&=
\frac{1}{\lVert h\rVert}
\biggl(
\int_{\R^n}f(x+h)\overline{g(x)}\,dx
-
\int_{\R^n}f(x)\overline{g(x)}\,dx
\biggr) \\
&=
\frac{1}{\lVert h\rVert}
\biggl(
\int_{\R^n}f(x)\overline{g(x-h)}\,dx
-
\int_{\R^n}f(x)\overline{g(x)}\,dx
\biggr)
\end{align*}
its adjoint is the backward difference
$D_h^*g(x)=(g(x-h)-g(x))/\lVert h\rVert$ and
\[
D_h^*D_hf(x) = D_h^*\biggl(\frac{f(x+h)-f(x)}{\lVert h \rVert} \biggr) = \frac{2f(x) -f(x+h)-f(x-h)}{\lVert h \rVert^2}.
\]
Passing to the Fourier domain, we get the spectral decomposition
\begin{equation}\label{eq:fin-diff}
(\mathcal{F}D_h^*D_h\mathcal{F}^*g)(\xi)
=
\frac{2-e^{2\pi i\xi\cdot h}-e^{-2\pi i\xi\cdot h}}
{\lVert h\rVert^2}g(\xi)
=
\frac{4\sin^2(\pi\xi\cdot h)}
{\lVert h\rVert^2}g(\xi).
\end{equation}
Let $\sigma(\xi) = 2\lvert \sin(\pi\xi \cdot h)\rvert/\lVert h \rVert$. In particular, $\mathsf{\Sigma}(D_h) = [0, 2/\lVert h \rVert]$. By Theorem~\ref{thm:svd1}, the operator $U$ defined by
\[  Ug(x) = D_h \mathcal{F}^*T_{\sigma^{-1}}g(x) = \int_{\R^n} e^{2\pi i \xi \cdot x} \frac{(e^{2\pi i \xi \cdot h}-1)g(\xi)}{2\lvert \sin(\pi\xi \cdot h)\rvert} \, d\xi\]
is unitary, and we have obtained that $D_h = U T_\sigma \mathcal{F}$ is the singular value decomposition of the finite difference operator.

Furthermore, let $h >0$ now denote a step size; we may consider the finite-difference gradient defined by
\[
\nabla_hf = \begin{bmatrix}
    D_{he_1}f\\ \vdots \\ D_{he_n}f
\end{bmatrix}.
\]
By \eqref{eq:fin-diff}, we obtain the spectral decomposition of $\nabla_h^* \nabla_h$ as
\[
\mathcal{F} \nabla_h^* \nabla_h \mathcal{F}^* g(\xi) = \sum_{k=1}^n \mathcal{F} \Biggl( D_{h e_k}^* D_{h e_k} \mathcal{F}^* g \Biggr)(\xi) = \sum_{k=1}^n \frac{4\sin^2(\pi \xi_k h)}{h^2} g(\xi).
\]
Take 
\[
\sigma_h(\xi) = \frac{2}{\lvert h \rvert} \biggl( \sum_{k=1}^n \sin^2(\pi \xi_k h) \biggr)^{1/2}.
\]
By Theorem~\ref{thm:svd1}, the right singular operator of $\nabla_h$ is the Fourier transform $\mathcal{F}$, and the left singular operator is given component-wise by
\[
(U_h g)_k(x) = \int_{\R^n} e^{2\pi i x \cdot \xi} \frac{e^{2\pi i \xi_k h} - 1}{h \sigma_h(\xi)} g(\xi) \, d\xi.
\]

Notice that the finite-difference gradient and the continuous gradient both share the Fourier transform as their right singular operator. Furthermore, the singular value function of $\nabla_h$ also converges to $\nabla$ as
\[
\lim_{h \to 0} \sigma_h(\xi) = \lim_{h \to 0} \frac{2}{\lvert h \rvert} \biggl( \sum_{k=1}^n (\pi \xi_k h)^2 \biggr)^{1/2} = 2\pi \lvert \xi \rvert = \sigma(\xi).
\]
Lastly, since
\[
\lim_{h \to 0} \frac{e^{2\pi i \xi_k h} - 1}{h \sigma_h(\xi)} = \lim_{h \to 0} \frac{2\pi i \xi_k h + O(h^2)}{h (2\pi \lvert \xi \rvert)} = \frac{i\xi_k}{\lvert \xi \rvert},
\]
the left singular operator of $\nabla_h$ also converges strongly to the left singular operator of $\nabla$.

\subsection{Hilbert--Schmidt operators}
Let $\mathcal{H}_1$ and $\mathcal{H}_2$ be separable Hilbert spaces and let $\{e_k\}_{k\in I}$ be an orthonormal basis of $\mathcal{H}_1$. The space of Hilbert--Schmidt operators from $\mathcal{H}_1$ to $\mathcal{H}_2$ is
\[
\operatorname{HS}(\mathcal{H}_1,\mathcal{H}_2)
=\Bigl\{X\in\mathcal{L}(\mathcal{H}_1,\mathcal{H}_2):
\lVert X\rVert_{\operatorname{HS}}^2 \coloneqq
\sum_{k\in I} \lVert Xe_k\rVert_{\mathcal{H}_2}^2
<\infty \Bigr\}.
\]
It is a Hilbert space equipped with the inner product
\[
\langle X,Y\rangle_{\operatorname{HS}}= \sum_{k\in I}
\langle Xe_k,Ye_k\rangle_{\mathcal{H}_2}.
\]
Suppose $A,C:\mathcal{D}(A),\mathcal{D}(C) \subseteq \mathcal{H}_1 \to \mathcal{H}_2$ are closed, densely defined operators with discrete singular values. Write their condensed singular value decompositions respectively as $A=U_A\Sigma_AV_A^*$ and $C=U_C\Sigma_CV_C^*$, where
$U_A:\ell^2(I_A)\to\mathcal{H}_2$,
$V_A:\ell^2(I_A)\to\mathcal{H}_1$,
$U_C:\ell^2(I_C)\to\mathcal{H}_2$, and
$V_C:\ell^2(I_C)\to\mathcal{H}_1$
are isometries, and
\[
\Sigma_Ae_i=a_i e_i, \quad i\in I_A,
\quad \text{and} \quad
\Sigma_Ce_j=c_j e_j, \quad j\in I_C,
\]
with $a_i,c_j>0$. 

For $Y\in\operatorname{HS}(\ell^2(I_C),\ell^2(I_A))$, let $y_{ij}=\langle Ye_j,e_i\rangle$ for $i\in I_A$ and $j\in I_C$. Consider $\Sigma: \mathcal{D}(\Sigma) \to \operatorname{HS}(\ell^2(I_C),\ell^2(I_A))$ defined by
\[
\Sigma Y=\Sigma_AY\Sigma_C, \quad \text{for } Y \in
\mathcal{D}(\Sigma) \coloneqq
\Bigl\{ Y\in \operatorname{HS}(\ell^2(I_C),\ell^2(I_A)):
\sum_{i\in I_A} \sum_{j\in I_C} a_i^2c_j^2\lvert y_{ij}\rvert^2 <\infty \Bigr\}
\]
and isometries 
\begin{align*}
U:\operatorname{HS}(\ell^2(I_C),\ell^2(I_A))&\to
\operatorname{HS}(\mathcal{H}_2),
&Y\mapsto U_AYU_C^*, \\
V:\operatorname{HS}(\ell^2(I_C),\ell^2(I_A)) &\to
\operatorname{HS}(\mathcal{H}_1),
&Y\mapsto V_AYV_C^*.
\end{align*}

For all $X\in\operatorname{HS}(\mathcal{H}_1)$ such that $V_A^*XV_C\in\mathcal{D}(\Sigma)$ and
$h\in\mathcal{D}(C^*)$, 
\[
AXC^*h=U_A\Sigma_AV_A^*XV_C\Sigma_CU_C^*h=(U\Sigma V^*X)h.
\]
Since
$U\Sigma V^*X\in\operatorname{HS}(\mathcal{H}_2)$ and
$\mathcal{D}(C^*)$ is dense in $\mathcal{H}_2$, $U\Sigma V^*X$ is the unique Hilbert--Schmidt extension of $AXC^*$. Hence, $U\Sigma V^*$ is the condensed singular value decomposition of the two-sided operator $X \mapsto AXC^*$. In particular, its positive singular values are $a_ic_j$, $i\in I_A$, $j\in I_C$.

\section{Singular value decomposition in physics}\label{sec:eg-phy}
We now shift our focus to differential operators arising in physics. We begin with the textbook example of ladder operators for the quantum harmonic oscillator and then move to its generalizations in the context of supersymmetric quantum mechanics and Sturm--Liouville theory.

\subsection{Ladder operators}\label{sec:ladder}
The creation and annihilation operators are a pair of dual operators that play instrumental roles in the algebraic formulation of quantum mechanics. The annihilation operator $A$ is defined by 
\[
A: \{f \in H^1(\R): T_x f \in L^2(\R)\} \subseteq L^2(\R) \to L^2(\R), \quad Af(x) = \frac{1}{\sqrt{2}}\biggl(xf(x)+\frac{df}{dx}(x)\biggr).
\]
The creation operator is its dual 
\[
A^*: \mathcal{D}(A^*) = \mathcal{D}(A) \subseteq L^2(\R) \to L^2(\R), \quad A^*f(x) = \frac{1}{\sqrt{2}}\biggl(xf(x)-\frac{df}{dx}(x)\biggr).
\]
Since the Schwartz space is a subspace of $\mathcal{D}(A) = \mathcal{D}(A^*)$ and the Schwartz space is dense in $L^2(\R)$, $A$ and $A^*$ are densely defined. $A$ and $A^*$ are closed because they are adjoint operators to each other.
One of the fundamental applications of these ladder operators is that $A^*A$ is the Hamiltonian of the simple harmonic oscillator up to a constant shift. 

Consider the Hermite functions $\{v_k\}_{k=0}^\infty$, defined using the Hermite polynomials $H_k(x)$ as:
\begin{equation}\label{eq:hermite}
  H_k(x) = (-1)^k e^{x^2} \frac{d^k}{dx^k}\bigl(e^{-x^2}\bigr), \quad v_k(x) \coloneqq \frac{e^{-x^2/2}H_k(x)}{(\sqrt{\pi}2^k k!)^{1/2}}.
\end{equation}
By \cite[Theorem~11.4]{Hall2013}, these form an orthonormal basis of $L^2(\R)$. The nomenclature of ``ladder operator" comes from applying $A$ to the basis functions:
\begin{equation}\label{eq:creation}
Av_k = \sqrt{k}v_{k-1} \quad \text{for } k \ge 1, \quad \text{and } Av_0 = 0.
\end{equation}
In other words, by Lemma~\ref{lem:adjoint} and Corollary~\ref{cor:discrete}, we have obtained the singular value decompositions 
\begin{equation}\label{eq:svd-creation}
Af = \sum_{k=1}^\infty \sqrt{k}\langle f, v_k \rangle v_{k-1}, \quad A^*f = \sum_{k=0}^\infty \sqrt{k+1}\langle f, v_{k} \rangle v_{k+1}, \quad f \in \mathcal{D}(A) = \mathcal{D}(A^*).
\end{equation}
Written in multiplication operator form, this computation makes precise the convention of representing the operators by infinite matrices. Choosing $\{v_k\}_{k=0}^\infty$ as the basis, the annihilation and creation operators could be respectively represented by 
\[
A = \begin{pmatrix}
0 & \sqrt{1} & 0 & 0 & \dots \\
0 & 0 & \sqrt{2} & 0 & \dots \\
0 & 0 & 0 & \sqrt{3} & \dots \\
0 & 0 & 0 & 0 & \ddots \\
\vdots & \vdots & \vdots & \vdots & \ddots 
\end{pmatrix}
,\quad \text{and }
A^* = \begin{pmatrix}
0 & 0 & 0 & 0 & \dots \\
\sqrt{1} & 0 & 0 & 0 & \dots \\
0 & \sqrt{2} & 0 & 0 & \dots \\
0 & 0 & \sqrt{3} & 0 & \dots \\
\vdots & \vdots & \vdots & \ddots & \dots 
\end{pmatrix}.
\]
The singular value decomposition \eqref{eq:svd-creation} thus corresponds to the factorization 
\[
A = \begin{pmatrix}
1 & 0 & 0 & 0 & \dots \\
0 & 1 & 0 & 0 & \dots \\
0 & 0 & 1 & 0 & \dots \\
0 & 0 & 0 & 1 & \dots \\
\vdots & \vdots & \vdots & \vdots & \ddots
\end{pmatrix} \begin{pmatrix}
\sqrt{1} & 0 & 0 & 0 & \dots \\
0 & \sqrt{2} & 0 & 0 & \dots \\
0 & 0 & \sqrt{3} & 0 & \dots \\
0 & 0 & 0 & \sqrt{4} & \dots \\
\vdots & \vdots & \vdots & \vdots & \ddots
\end{pmatrix}\begin{pmatrix}
0 & 1 & 0 & 0 & \dots \\
0 & 0 & 1 & 0 & \dots \\
0 & 0 & 0 & 1 & \dots \\
0 & 0 & 0 & 0 & \dots \\
\vdots & \vdots & \vdots & \vdots & \ddots
\end{pmatrix}
\]
and analogously for $A^*$.

\subsection{Supersymmetric quantum mechanics}\label{sec:SUSY}

In fact, Section~\ref{sec:ladder} is a special case of a much more general phenomenon that arises in the study of supersymmetric quantum mechanics (SUSY QM) \cite{SUSY1,SUSY2}. In the Hamiltonian formulation of SUSY QM, a self-adjoint second-order Hamiltonian $\mathsf H$ is factorized, up to a constant shift, as $\mathsf H=A^*A$, an idea dating back to \cite{factorization}. The annihilation and creation operators give exactly this factorization for the simple harmonic oscillator. In general, the factorization reveals structural information about the energy levels and state spaces and produces a hierarchy of new Hamiltonians. We now show how the singular value decomposition of $A$ reveals the desired information of the SUSY Hamiltonian $\mathsf{H}$.

Let the Hamiltonian $\mathsf{H}_1: \mathcal{D}(\mathsf{H}_1)\subseteq L^2(\R) \to L^2(\R)$ be defined as the Friedrichs extension of the operator
\begin{equation}\label{eq:H1}
f \mapsto -\frac{d^2f}{dx^2}+ V_1f, \quad f \in C^\infty_c(\R),
\end{equation} where $V_1 \in C^\infty(\R)$ is the potential of the system such that $V_1(x) \to \infty$ as $\lvert x \rvert \to \infty$, which is densely defined and closed by construction. By \cite[Theorem~2.3.1]{BS1991}, $\mathsf{H}_1$ has a discrete spectrum $\mathsf{\Lambda}(\mathsf{H}_1) = \{\lambda_k(\mathsf{H}_1)\}_{k=0}^\infty$, arranged in increasing order, and each eigenspace is one-dimensional. For each eigenvalue $\lambda_k(\mathsf{H}_1)$, let $f_k$ denote, up to a sign, the normalized eigenfunction. Up to a constant shift, we assume that the ground state energy is zero, i.e., $\lambda_0(\mathsf{H}_1) = 0$. By \cite[Theorem~3.5]{BS1991}, $f_0$ has no zeros. One immediately obtains that the potential satisfies $V_1 = f_0''/f_0$. Consider
\[
A \coloneqq \frac{d}{dx} + T_W, \quad \text{ where } W \coloneqq -\frac{f_0'}{f_0}
\]
is the superpotential and the domain is $\mathcal{D}(A) = \{f \in L^2(\R): f' +Wf \in L^2(\R)\}$. This first-order operator is closed and densely defined. Integrating by parts gives that 
\[
A^* = -\frac{d}{dx} + T_W, \quad \mathcal{D}(A^*) =  \{f \in L^2(\R): -f' +Wf \in L^2(\R)\}.
\]

For $f \in C^\infty_c(\R)$,
\[A^*Af=
\left(-\frac{d}{dx}+T_W\right)
\left(\frac{d}{dx}+T_W\right)f=
-f''-W'f+W^2f=
-f''+\frac{f_0''}{f_0}f=
\mathsf H_1f.\]
Since the closed quadratic form \(f\mapsto\lVert Af\rVert_{L^2}^2\) is
the Friedrichs closure of the quadratic form of \eqref{eq:H1}, $\mathsf H_1=A^*A$.

A fundamental idea in SUSY QM is to construct a new Hamiltonian using $A$ and $A^*$:
\[
\mathsf{H}_2 = AA^* =  -\frac{d^2}{dx^2} + T_{V_2}, \quad \text{ where } V_2 = W^2 + W'
\]
is the supersymmetric partner potential. The following proposition summarizes some of the most fundamental observations in the Hamiltonian formulation of SUSY QM \cite[Section~2]{SUSY1}, which simplifies in the language of singular value decomposition.

\begin{proposition}\label{prop:SUSY}
Let $\mathsf{H}_1$, $\mathsf{H}_2$, $A$ be as defined above. Then 
\begin{enumerate}[\normalfont(i)]
    \item \label{itm:SUSY1} $A$ has countably many singular values $\mathsf{\Sigma}(A)=\{0\}\cup \{\sigma_k = \sqrt{\lambda_k(\mathsf{H}_1)}\}_{k=1}^\infty$;
    \item \label{itm:SUSY2}$\mathsf{\Lambda}(\mathsf{H}_2)  = \{\lambda_{k}(\mathsf{H}_2) = \sigma_{k+1}^2\}_{k=0}^\infty$;
    \item \label{itm:SUSY3} For $k=0,1,\dots$, the eigenfunctions of $\lambda_k(\mathsf{H}_2)$ are constant multiples of $g_k = \sigma_{k+1}^{-1} Af_{k+1}$;
    \item \label{itm:SUSY4} For $k=0,1,\dots$, $f_{k+1} = \sigma_{k+1}^{-1} A^*g_k$.
\end{enumerate}
\end{proposition}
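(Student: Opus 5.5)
The plan is to read everything off the discrete singular value decomposition of $A$ (Corollary~\ref{cor:discrete}), using $\mathsf H_1=A^*A$ and $\mathsf H_2=AA^*$.

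For \ref{itm:SUSY1}, recall that $\mathsf{\Sigma}(A)=\{\sqrt{\lambda}:\lambda\in\mathsf{\Lambda}(A^*A)\}=\{\sqrt{\lambda_k(\mathsf H_1)}\}_{k\ge0}$. Since $\lambda_0(\mathsf H_1)=0$, this set is $\{0\}\cup\{\sigma_k\}_{k\ge1}$, and it is discrete because $\mathsf{\Lambda}(\mathsf H_1)$ is. The eigenspaces of $\mathsf H_1$ are one-dimensional, so $\mathcal N(A)=\mathcal N(A^*A)=\operatorname{span}\{f_0\}$ and each $\sigma_k$ with $k\ge1$ has multiplicity one. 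The right singular vectors are therefore $f_k$, $k\ge1$. By \eqref{eq:basis}, the left singular vectors are $g_{k-1}\coloneqq\sigma_k^{-1}Af_k$, and Corollary~\ref{cor:discrete} gives
\[
Af=\sum_{k\ge1}\sigma_k\langle f,f_k\rangle g_{k-1}.
\]
Orthonormality of $\{g_k\}$ follows directly: $\langle Af_j,Af_k\rangle=\langle A^*Af_j,f_k\rangle=\lambda_j\delta_{jk}$.

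For \ref{itm:SUSY2}--\ref{itm:SUSY4}, apply Proposition~\ref{prop:adjoint} to get $A^*=VT_\sigma U^*$, that is, $A^*g=\sum_k\sigma_k\langle g,g_{k-1}\rangle f_k$. Then $A^*g_k=\sigma_{k+1}f_{k+1}$, which is \ref{itm:SUSY4}. Also $AA^*g_k=\sigma_{k+1}Af_{k+1}=\sigma_{k+1}^2g_k$, so every $\sigma_{k+1}^2$ is an eigenvalue of $\mathsf H_2$ with eigenvector $g_k$.

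To see that these exhaust $\mathsf{\Lambda}(\mathsf H_2)$, note that $AA^*=UT_{\sigma^2}U^*$ on $\range(U)=\overline{\range(A)}$ (Proposition~\ref{prop:proj}) and vanishes on $\range(A)^\perp=\mathcal N(A^*)$. So the spectrum of $\mathsf H_2$ is $\{\sigma_k^2\}_{k\ge1}$, possibly together with $0$ if $\mathcal N(A^*)\ne\{0\}$. Each eigenvalue $\sigma_{k+1}^2$ has the one-dimensional eigenspace spanned by $g_k$, which is \ref{itm:SUSY3}.

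The main obstacle is excluding $0$, i.e.\ showing $\mathcal N(A^*)=\{0\}$. A function in $\mathcal N(A^*)$ solves $-g'+Wg=0$, so $g=c\,e^{\int W}=c/f_0$. Because $f_0\in L^2$ and $f_0$ has no zeros, $1/f_0$ is not in $L^2$: for instance $\int_a^b 1\le(\int f_0^2)^{1/2}(\int f_0^{-2})^{1/2}$ on intervals of arbitrary length, so $\int f_0^{-2}=\infty$. Hence $c=0$, the orthonormal family $\{g_k\}$ is complete, and $\mathsf{\Lambda}(\mathsf H_2)=\{\sigma_{k+1}^2\}_{k\ge0}$. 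Two further points need a brief check: that $\mathsf H_2$ is indeed the self-adjoint operator $AA^*$, which holds since $A$ is closed, and that distributional solutions of $A^*g=0$ are smooth, which holds because $W$ is smooth away from no zeros of $f_0$.
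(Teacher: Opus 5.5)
Your proposal is correct and takes essentially the same route as the paper. Both arguments read off the discrete singular value decomposition $A=\sum_{k}\sigma_{k+1}\langle\,\cdot\,,f_{k+1}\rangle g_k$, and show $\mathcal{N}(A^*)=\{0\}$ because any kernel element has the form $C/f_0$ and $1/f_0\notin L^2(\R)$ by Cauchy--Schwarz. This makes $\{g_k\}$ an orthonormal basis, so $AA^*=\sum_k\sigma_{k+1}^2\langle\,\cdot\,,g_k\rangle g_k$ is the spectral decomposition of $\mathsf{H}_2$, and (iv) follows from the adjoint's decomposition. Your extra checks (orthonormality of the $g_k$, simplicity of the eigenvalues, regularity of distributional solutions of $A^*g=0$) fill in details the paper leaves implicit without changing the argument.
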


\begin{proof}
Without loss of generality, we assume $\lVert f_0\rVert_{L^2(\R)}=1$. By Corollary~\ref{cor:discrete}, \ref{itm:SUSY1} follows as
\[
A = \sum_{k=0}^\infty \sigma_{k+1} \langle \,\cdot, f_{k+1}\rangle g_k, \quad \sigma_k = \sqrt{\lambda_k(\mathsf{H}_1)} \text{ and } g_k = \sigma_{k+1}^{-1} Af_{k+1}.
\]
Suppose there exists $g \in L^2(\R)$ such that $A^*g = 0$.
Then by the product rule,
\[
g' = Wg = -\frac{f_0'}{f_0} g
\Longleftrightarrow
(gf_0)' = 0.
\]
Thus, $g = C/f_0$ for some constant $C$. If $C\neq0$, then for every $R>0$, Cauchy--Schwarz inequality gives
\[
(2R)^2= \biggl( \int_{-R}^R \lvert f_0(x)\rvert \lvert f_0(x)\rvert^{-1} \,dx\biggr)^2
\le \biggl(\int_{-R}^R \lvert f_0(x)\rvert^2 \,dx\biggr) \biggl(\int_{-R}^R \lvert f_0(x)\rvert^{-2} \,dx\biggr)
\le \int_{-R}^R \lvert f_0(x)\rvert^{-2}\,dx.
\]
Letting $R\to\infty$ gives that
$1/f_0\notin L^2(\R)$, and hence $g\notin L^2(\R)$.
Therefore, $C=0$ and $\mathcal{N}(A^*)=\{0\}$. Since $A$ is closed, this implies that $\overline{\range(A)} = L^2(\R)$, and $\{g_k\}_{k=0}^\infty$ is an orthonormal basis of $L^2(\R)$.
Consequently, 
\[
\mathsf{H}_2 = AA^* = \sum_{k=0}^\infty \sigma_{k+1}^2 \langle \,\cdot, g_k\rangle g_k = \sum_{k=0}^\infty \lambda_{k+1}(\mathsf{H}_1) \langle \,\cdot, g_k\rangle g_k.
\]
is the spectral decomposition of $\mathsf{H}_2$, from which \ref{itm:SUSY2} and \ref{itm:SUSY3} follow. 

Lastly, by Lemma~\ref{lem:adjoint}, 
\[
    A^*g_j = \sum_{k=1}^\infty \sigma_k \langle g_j, g_{k-1}\rangle f_k = \sigma_{j+1}f_{j+1}. \qedhere
\]
\end{proof}

Therefore, the singular value decomposition of $A$ contains all the desired information of the augmented SUSY Hamiltonian
\[
\begin{pmatrix}
    \mathsf{H}_1 &0 \\
    0 &\mathsf{H}_2
\end{pmatrix} = \begin{pmatrix}
    0 &A^*\\ 0 &0
\end{pmatrix}
\begin{pmatrix}
    0 &0\\ A &0
\end{pmatrix}
+
\begin{pmatrix}
    0 &0\\ A &0
\end{pmatrix}
\begin{pmatrix}
    0 &A^*\\ 0 &0
\end{pmatrix} = \begin{pmatrix}
    0 &A^*\\ A &0
\end{pmatrix}^2.
\]
In particular, the left and right singular vectors of $A$ capture exactly the bosonic and fermionic energy states of the system. 

\subsection{Sturm--Liouville theory}\label{sec:Sturm--Liouville}

The Sturm--Liouville problem is a second-order ODE closely related to separable PDEs and quantum mechanics \cite{AHP2005}. Let $(a,b) \subseteq \R$. Consider the differential operator
\begin{equation}\label{eq:Sturm--Liouville}
  f \mapsto -\frac{1}{w}\frac{d}{dx}\biggl(p\frac{df}{dx}\biggr) + \frac{q}{w}f, \quad  \text{where } f\in C^\infty([a,b]) \; \text{satisfies } \begin{cases*} c_1 f(a) + c_2 f'(a) = 0, \\ c_3 f(b) + c_4 f'(b) = 0,
  \end{cases*}
\end{equation}
where $0 < p, w \in C^2([a,b])$, $q \in C([a,b];\R)$, $c_1,c_2 \in \R$ are not both zero, and $c_3,c_4 \in \R$ are not both zero. By \cite[Section~4]{Everitt2005}, \eqref{eq:Sturm--Liouville} extends to a densely defined closed self-adjoint operator $L: \mathcal{D}(L) \subseteq L^2([a,b],w(x)dx) \to L^2([a,b],w(x)dx)$. The Sturm--Liouville problem is the eigenvalue problem of this operator. 

$L$ has a discrete spectrum $\mathsf{\Lambda}(L) = \{\lambda_k(L)\}_{k=0}^\infty$, arranged in increasing order, and each eigenspace is one-dimensional \cite[Theorem~D]{Hinton2005}. For each eigenvalue $\lambda_k(L)$, let $f_k$ denote, up to a sign, the normalized eigenfunction. Up to a constant shift, we may again assume that $\lambda_0(L) = 0$. By \cite[Theorem~D]{Hinton2005} again, $f_0$ has no zeros in $(a,b)$. The apparent similarity to Section~\ref{sec:SUSY} is not coincidental: the bridge is the Liouville transform. Let $x_0 \in [a,b]$ and consider the change of variable 
\[
t(x) = x_0 + \int_{x_0}^x \sqrt{\frac{w(y)}{p(y)}} \, dy.
\]
Since $w$ and $p$ are strictly positive, the change of variable is a bijection between $(a,b)$ and $(\alpha,\beta)$ where
\[
\alpha = \lim_{x \to a^+} t(x), \quad \beta = \lim_{x \to b^-} t(x).
\]
The Liouville transform is the operator 
\[
U: L^2([a,b],w(x)dx) \to L^2([\alpha,\beta],dt), \quad Uf(t) = r(x(t))f(x(t)), \quad \text{ where } r(x) = \bigl(p(x)w(x)\bigr)^{1/4}.
\]
Since
\[
\lVert Uf \rVert_{L^2(dt)}^2 = \int_{\alpha}^\beta \bigl\lvert p(x(t))w(x(t))\bigr\rvert^{1/2} \lvert f(x(t)) \rvert^2 \,dt = \int_a^b \bigl\lvert p(x)w(x)\bigr\rvert^{1/2} \lvert f(x) \rvert^2 \sqrt{\frac{w(x)}{p(x)}}\,dx = \lVert f\rVert^2_{L^2(w\,dx)},
\]
$U$ is an isometry. Moreover, it is unitary as its inverse is
\[
U^{-1}u(x) = U^*u(x) =  \frac{1}{r(x)}u(t(x)) , \quad u(t) \in L^2((\alpha,\beta),dt).
\]
A direct calculation verifies that for the densely defined self-adjoint operator $\mathsf{H} \coloneqq ULU^*$,
\[
  \mathsf{H} = -\frac{d^2}{dt^2}+V(t) , \quad \text{ where }
V(t)=\frac{q(x(t))}{w(x(t))}+\frac{1}{r}\frac{d^2 r}{dt^2}.
\]
In particular, $\mathsf{H}$ has the same spectrum as $L$ and $\mathsf{H}Uf_0 = 0$ and $Uf_0$ has no zeros on $(\alpha,\beta)$. By the same argument as in Section~\ref{sec:SUSY}, 
\[
\mathsf{H} = \biggl(-\frac{d}{dt} + T_W\biggr)\biggl(\frac{d}{dt} + T_W\biggr), \enspace \text{ where } W(t) = -\sqrt{\frac{p(x)}{w(x)}} \left( \frac{f_0'(x)}{f_0(x)} + \frac{1}{4}\frac{p'(x)w(x) + p(x)w'(x)}{p(x)w(x)} \right) \bigg|_{x=x(t)}.
\]
Conjugating by the Liouville transform yields that 
\begin{equation}\label{eq:Sturm--Liouville2}
L = B^*B, \quad B \coloneqq U^* \Bigl(\frac{d}{dt}+T_W \Bigr) U =  \sqrt{\frac{p}{w}}\biggl(\frac{d}{dx}- \frac{f_0'}{f_0} \biggr).
\end{equation}
We take the maximal domain such that $B$ is a closed, densely defined operator. By \cite[Section~5]{Everitt2005}, the Sturm--Liouville operators are well-defined if the closed interval is replaced by a half-line. However, in this case, \cite[Theorem~D]{Hinton2005} no longer holds and $L$ might have a continuous spectrum. Nevertheless, if $L$ has a lowest eigenvalue $0$, by \cite[Theorem~3.5]{Simon2005}, it has an eigenfunction $f_0$ with no zeros. In that case, since the Liouville transform is well-defined if $[a,b]$ is replaced by a half-line, \eqref{eq:Sturm--Liouville2} holds for $L$ defined on $L^2((0,\infty),w(x)\,dx)$. Many important special functions arise as singular vectors of the operator $B$. 

\begin{example}[Laguerre polynomials]
Let $\alpha > -1$ and consider the domain $(0, \infty)$ with $p(x) = x^{\alpha+1} e^{-x}$, $w(x) = x^\alpha e^{-x}$, and $q(x) = 0$. The Sturm--Liouville operator is defined by extending
\[
  Lf = -\frac{1}{x^\alpha e^{-x}}\frac{d}{dx}\biggl(x^{\alpha+1} e^{-x}\frac{df}{dx}\biggr) = -x\frac{d^2f}{dx^2} - (\alpha + 1 - x)\frac{df}{dx}.
\]
Trivially, $f_0 = 1$ is an eigenfunction for $0$. Therefore, on $L^2((0, \infty), x^\alpha e^{-x}dx)$,
\[
  B = \sqrt{\frac{p(x)}{w(x)}}\biggl(\frac{d}{dx} - \frac{f_0'(x)}{f_0(x)} \biggr) = \sqrt{x}\frac{d}{dx}.
\]
Note that the expression for $B$ is independent of $\alpha$. By \cite[Section~27]{Everitt2005}, the normalized generalized Laguerre polynomials 
\[
  \sqrt{\frac{k!}{\Gamma(k+\alpha+1)}} L_k^{(\alpha)}(x) = \sqrt{\frac{k!}{\Gamma(k+\alpha+1)}} \frac{x^{-\alpha}e^x}{k!}\frac{d^k}{dx^k}(e^{-x}x^{k+\alpha}), \quad k = 0, 1, 2, \dots,
\]
form an orthonormal basis in $L^2((0, \infty), x^\alpha e^{-x}dx)$ and are the eigenfunctions of $L$ corresponding to the eigenvalues $\lambda_k = k$. Consequently, they are the right singular vectors of $B$ with singular values $\sigma_k = \sqrt{k}$. 

Since $L_k^{(\alpha)}{'}(x) = -L_{k-1}^{(\alpha+1)}(x)$, for $k=1,2,\dots$, the corresponding left singular vector is
\[
  \frac{1}{\sigma_k} B \sqrt{\frac{k!}{\Gamma(k+\alpha+1)}} L_k^{(\alpha)}(x) = -\sqrt{\frac{(k-1)! x}{\Gamma(k+\alpha+1)}} L_{k-1}^{(\alpha+1)}(x).
\]
\end{example}

\begin{example}[Jacobi polynomials]
Let $\alpha, \beta > -1$ and consider the domain $(-1, 1)$ with $p(x) = (1-x)^{\alpha+1}(1+x)^{\beta+1}$, $w(x) = (1-x)^\alpha (1+x)^\beta$, and $q(x) = 0$. 
The Sturm--Liouville operator is defined by extending
\[
  Lf = -\frac{1}{(1-x)^\alpha (1+x)^\beta}\frac{d}{dx}\biggl((1-x)^{\alpha+1}(1+x)^{\beta+1}\frac{df}{dx}\biggr).
\]
Trivially, $f_0(x) = 1$ is an eigenfunction for $0$. Therefore, on $L^2((-1, 1), (1-x)^\alpha (1+x)^\beta dx)$,
\[
  B = \sqrt{\frac{p(x)}{w(x)}}\biggl(\frac{d}{dx} - \frac{f_0'(x)}{f_0(x)} \biggr) = \sqrt{1-x^2}\frac{d}{dx}.
\]
Note that the expression for $B$ is independent of both $\alpha$ and $\beta$. Recall that the Jacobi polynomials 
\[
 P_k^{(\alpha, \beta)}(x) = \frac{(-1)^k}{2^k k!} (1-x)^{-\alpha} (1+x)^{-\beta} \frac{d^k}{dx^k} \Bigl[ (1-x)^{\alpha+k} (1+x)^{\beta+k} \Bigr]
\]
form an orthogonal basis in $L^2((-1, 1), (1-x)^\alpha (1+x)^\beta dx)$. We may normalize them by $h_k(x) \coloneqq \bigl( N_k^{(\alpha,\beta)} \bigr)^{-1/2} P_k^{(\alpha, \beta)}(x)$ where
\[
  N_k^{(\alpha, \beta)} = \int_{-1}^1 \bigl( P_k^{(\alpha, \beta)}(x) \bigr)^2 (1-x)^\alpha (1+x)^\beta \,dx = \frac{2^{\alpha+\beta+1}}{2k+\alpha+\beta+1} \frac{\Gamma(k+\alpha+1)\Gamma(k+\beta+1)}{k!\Gamma(k+\alpha+\beta+1)}.
\]
In fact, they are the eigenvalues of $L$ corresponding to the eigenvalues $\lambda_k = k(k+\alpha+\beta+1)$ \cite[Section~23]{Everitt2005}. Consequently, they are the right singular vectors of $B$ with singular values $\sigma_k = \sqrt{k(k+\alpha+\beta+1)}$.

Since 
\[
  \frac{d}{dx} P_k^{(\alpha, \beta)}(x) = \frac{1}{2}(k+\alpha+\beta+1) P_{k-1}^{(\alpha+1, \beta+1)}(x),
\] the left singular vectors are
\begin{align*}
  g_k(x) &= \frac{1}{\sigma_k} B h_k(x) = \frac{1}{\sqrt{k(k+\alpha+\beta+1)}} \sqrt{1-x^2} \bigl( N_k^{(\alpha, \beta)} \bigr)^{-1/2} \frac{1}{2}(k+\alpha+\beta+1) P_{k-1}^{(\alpha+1, \beta+1)}(x) \\
  &= \frac{1}{2} \sqrt{\frac{k+\alpha+\beta+1}{k}} \bigl( N_k^{(\alpha, \beta)} \bigr)^{-1/2} \sqrt{1-x^2} P_{k-1}^{(\alpha+1, \beta+1)}(x).
\end{align*}

In particular, if $\alpha = \beta = 0$, the Jacobi polynomials reduce to the Legendre polynomials.
\end{example}

\section{Singular value decomposition in integral operator and density estimation}
We now turn to an integral operator on $(0,\infty)$ whose right and left singular operators are the sine and cosine transforms. We then use this decomposition to construct a nonparametric density estimator and analyze the worst-case risk.

\subsection{Integral operator}\label{sec:integral}
Consider the integral operator $A$ over $L^2(0,\infty)$ with kernel $K(x,y)=\1_{\{y\ge x\}}$. It is straightforward to verify that it is the unbounded operator
\[
(Af)(x) \coloneqq \int_x^\infty f(y) \,dy, \quad \mathcal{D}(A) = \Big\{f\in L^2(0,\infty): x\mapsto \int_x^\infty f(y)\,dy\in L^2(0,\infty)\Big\}.
\]
Since $C_c(0,\infty)\subseteq \mathcal{D}(A)$, the domain $\mathcal{D}(A)$ is dense in $L^2(0,\infty)$. Applying Fubini's theorem, the adjoint of $A$ is given by
\[
(A^*g)(y)=\int_0^y g(x)\,dx, \quad \mathcal{D}(A^*)=\Bigl\{g\in L^2(0,\infty): y\mapsto \int_0^y g(x)\,dx \in L^2(0,\infty)\Bigr\}.
\]
Consequently, for $f\in\mathcal{D}(A^*A)$,
\[
(A^*Af)(x)
=
\int_0^x (Af)(t)\,dt
=
\int_0^x\int_t^\infty f(y)\,dy\,dt
=
\int_0^\infty \min\{x,y\}f(y)\,dy,
\]
i.e., $A^*A$ is the integral operator with kernel $\min\{x,y\}$.

Recall the sine transform on $L^2(0,\infty)$ is the unitary and self-adjoint operator \cite[Theorem~52]{Titchmarsh1948} defined by
\[
(\mathcal{S}f)(z)\coloneqq\sqrt{\frac{2}{\pi}}\int_0^\infty \sin(zy)f(y)\,dy.
\]
Consider the set $\mathcal{S}^{-1}\big(C_c^\infty(0,\infty)\big) = \mathcal{S}\big(C_c^\infty(0,\infty)\big)$, which is a core for the self-adjoint operator $\mathcal{S}T_{1/z^2}\mathcal{S}$ via a standard argument.  For
$f \in \mathcal{S}\big(C_c^\infty(0,\infty)\big)$,
\[
	\mathcal{S}T_{1/z^2}\mathcal{S}f(x) =\int_0^\infty\sin(zx)\frac{1}{z^2}\frac{2}{\pi} \Big[ \int_{0}^\infty \sin(zy)f(y)\,d y\Big] d z=\int_0^\infty K(x,y) f(y)\,d y
\]
where \cite[Equation~3.741.3]{Gradshteyn2007}
\[
K(x,y)
=
\frac{2}{\pi}\int_0^\infty \frac{\sin(zx)\sin(zy)}{z^2}\,dz=\min\{x,y\}.
\]
The integrals here converge absolutely since $f$ is a Schwartz function. Since $f \in \mathcal{S}^{-1}\big(C_c^\infty(0,\infty)\big)$,
\[
\sqrt{\pi/2}\,(\mathcal{S}f)'(0) = \sqrt{\frac{2}{\pi}}\int_0^\infty y\cos(zy)f(y)\,dy \bigg\vert_{z=0} = \int_0^\infty y f(y)\,dy = 0.
\]
In particular, $y \mapsto \int_0^y (Af)(t)\,dt$ vanishes at infinity and lies in $L^2(0,\infty)$, so $\mathcal{S}\big(C_c^\infty(0,\infty)\big) \subseteq \mathcal{D}(A^*A)$. As $A^*A$ and $\mathcal{S}T_{1/z^2}\mathcal{S}$ are self-adjoint and agree on the core $\mathcal{S}\big(C_c^\infty(0,\infty)\big)$, $A^*A = \mathcal{S} T_{1/z^2}\mathcal{S}$.  

We now proceed to find the singular value decomposition of $A$. Since $\mathcal{N}(A) = \{0\}$, Theorem~\ref{thm:svd1} gives that $A = U T_{1/z} \mathcal{S}$ for some unitary operator $U$, which we claim equals to the cosine transform defined by 
\[
(\mathcal{C}f)(x) \coloneqq \sqrt{\frac{2}{\pi}}\int_0^\infty \cos(xz)f(z)\,dz.
\]
It suffices to check the $\mathcal{C}f = A\mathcal{S}T_zf$ for all $f \in C_c(0,\infty)$. Notice that 
\begin{align*}
	\frac{d}{dx}(A \mathcal{S} T_z f)(x) &=-(\mathcal{S}T_zf)(x)
=
-\sqrt{\frac{2}{\pi}}\int_0^\infty \sin(xz)\,z f(z)\,dz 
=
\sqrt{\frac{2}{\pi}}\int_0^\infty \frac{d}{dx}\cos(xz)\,f(z)\,dz \\&
=
\frac{d}{dx}\Big(\sqrt{\frac{2}{\pi}}\int_0^\infty \cos(xz)f(z)\,dz\Big) =\frac{d}{dx}(\mathcal{C}f)(x).
\end{align*}

Therefore, integrating back with respect to $x$, we obtain $(Uf)(x)=(\mathcal{C}f)(x)+c$ for some constant $c$. Since 
$c = Uf - \mathcal{C}f \in L^2(0,\infty)$, $c= 0$. The singular value decomposition of $A$ is hence $A = \mathcal{C}T_{1/z}\mathcal{S}$. 

\subsection{Nonparametric density estimation}\label{sec:density}
Let $X_1, \dots, X_n$ be independent and identically distributed nonnegative random variables with density $f\in L^2(0,\infty)$. Our goal in this section is to use the singular value decomposition computed in Section~\ref{sec:integral} to construct an estimator of $f$ and establish matching bounds.

Recall that the survival function of $f$ is defined by 
\[
G(x)\coloneqq \mathbb{P}(X>x) = \int_x^\infty f(y)\,dy = Af(x)\quad \text{for } x \ge 0.
\]

In practice, we do not have access to $G(x)$ but instead the empirical survival function  
\[
G_n(x)\coloneqq \frac{1}{n} \sum_{k=1}^n \mathbbm{1}_{(x,\infty)}(X_k) \quad \text{for } x \geq 0.
\]
By Section~\ref{sec:integral} and Proposition~\ref{prs:mp-inv}, the Moore--Penrose inverse of $A$ is the unbounded operator $A^\dagger = \mathcal{S} T_{z}\mathcal{C}$. Notice that in general $G_n \notin \mathcal{D}(A^\dagger)$, so we cannot directly define an estimator as $A^\dagger G_n$. To compensate for the issue, we introduce the cutoff estimator 
\[
\widehat f_\alpha\coloneqq \mathcal{S} T_{z\1_{[0,\alpha]}} \mathcal{C} G_n.
\]
in the spirit of Proposition~\ref{prop:approx-bounded}. 

\begin{proposition}
    Let $X_1,\dots,X_n$ be iid random variables with density $f$ and $\E X_1<\infty$, then there is a  sequence $(\alpha_n)_{n\ge 0}$ such that $\alpha_n\to \infty$ and
    \[
    \mathbb{E} \lVert \widehat f_{\alpha_n} - f \rVert^2  \to 0.
    \]
\end{proposition}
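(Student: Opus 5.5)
The plan is a bias--variance decomposition in the frequency domain, followed by a choice of cutoff $\alpha_n$ that balances the two terms. The condition $\E X_1<\infty$ is exactly what makes $G=Af$ lie in $L^2(0,\infty)$, since $\int_0^\infty G(x)^2\,dx\le\int_0^\infty G(x)\,dx=\E X_1$. The same computation shows that $G_n\in L^2(0,\infty)$ almost surely with $\lVert G_n\rVert^2\le \frac1n\sum_k X_k$. Hence $\mathcal{C}G_n$ is defined, and $\widehat f_\alpha$ is a well-defined element of $L^2(0,\infty)$, because $z\1_{[0,\alpha]}$ is bounded. Since $G\in\mathcal{D}(A^\dagger)$ and $A^\dagger Af=f$ (as $\mathcal{N}(A)=\{0\}$ and Proposition~\ref{prs:mp-inv} gives $A^\dagger=\mathcal{S}T_z\mathcal{C}$), we have $f=\mathcal{S}T_z\mathcal{C}G$. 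Because $\mathcal{S}$ is unitary,
\[
\lVert \widehat f_\alpha-f\rVert^2=\int_0^\infty \bigl\lvert z\1_{[0,\alpha]}(z)\,\mathcal{C}G_n(z)-z\,\mathcal{C}G(z)\bigr\rvert^2dz
=\int_0^\alpha z^2\lvert\mathcal{C}(G_n-G)\rvert^2dz+\int_\alpha^\infty z^2\lvert \mathcal{C}G\rvert^2dz .
\]

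For the bias term, note that $z\,\mathcal{C}G=\mathcal{S}f\in L^2$ (indeed $\mathcal{C}G=T_{1/z}\mathcal{S}f$), so $\int_\alpha^\infty z^2\lvert\mathcal{C}G\rvert^2\,dz=\int_\alpha^\infty\lvert\mathcal{S}f\rvert^2\,dz=:b(\alpha)$. This tends to $0$ as $\alpha\to\infty$ by dominated convergence, and it is deterministic.

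For the variance term, bound $z^2\le\alpha^2$ on $[0,\alpha]$ and use unitarity of $\mathcal{C}$:
\[
\E\int_0^\alpha z^2\lvert\mathcal{C}(G_n-G)\rvert^2dz\le\alpha^2\,\E\lVert G_n-G\rVert^2 .
\]
By Fubini, $\E\lVert G_n-G\rVert^2=\int_0^\infty \Var(G_n(x))\,dx=\frac1n\int_0^\infty G(x)(1-G(x))\,dx\le \frac{\E X_1}{n}$. Here we use that, for each $x$, $G_n(x)$ is an average of $n$ iid Bernoulli$(G(x))$ variables, which is unbiased for $G(x)$. Hence $\E\lVert\widehat f_\alpha-f\rVert^2\le \alpha^2\E X_1/n+b(\alpha)$.

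Finally, choose $\alpha_n\to\infty$ with $\alpha_n^2/n\to0$, for instance $\alpha_n=n^{1/4}$. Both terms then vanish and the claim follows.

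The main point needing care is justifying the identity $f=\mathcal{S}T_z\mathcal{C}G$, that is, that $\mathcal{C}G=\mathcal{S}f/z$. This follows directly from the decomposition $A=\mathcal{C}T_{1/z}\mathcal{S}$ of Section~\ref{sec:integral}, since $f\in\mathcal{D}(A)$ whenever $Af=G\in L^2$. A second point of care is the measurability and Fubini interchange for $\E\lVert G_n-G\rVert^2$, which is routine because all integrands are nonnegative.
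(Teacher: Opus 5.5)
Your proposal is correct and follows essentially the same route as the paper: a bias--variance split in which the variance term is controlled by $\alpha^2\,\E\lVert G_n-G\rVert^2\le \alpha^2\E X_1/n$, and the cutoff is chosen so that $\alpha_n^2/n\to0$. The differences are minor: you use the exact orthogonal splitting over $[0,\alpha]$ and $(\alpha,\infty)$ (as the paper itself later does in \eqref{eq:density-decomp}) rather than the bound $\lVert a+b\rVert^2\le 2\lVert a\rVert^2+2\lVert b\rVert^2$, and you identify the bias explicitly as $\int_\alpha^\infty\lvert\mathcal{S}f\rvert^2\,dz$ instead of invoking Proposition~\ref{prop:approx-bounded}.
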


\begin{proof}
Notice that
\[
\E\lVert \widehat f_\alpha -f \rVert_2^2  = \E\lVert A^\dagger_\alpha(G_n-G) +(A^\dagger_\alpha G-f)\rVert_2^2 \le 2\lVert A^\dagger_\alpha\rVert^2  \E\lVert G_n-G\rVert_2^2 + 2\lVert A^\dagger_\alpha G-f\rVert_2^2.
\]
For the first term, $\lVert A^\dagger_\alpha\rVert = \lVert \mathcal{S}T_{z\1_{[0,\alpha]}}\mathcal{C}\rVert= \lVert T_{z\1_{[0,\alpha]}}\rVert=\alpha$. Then by Tonelli's theorem, 
\begin{align*}
	\lVert A^\dagger_\alpha\rVert^2\E\lVert G_n-G\rVert_2^2 & =\alpha^2 \int_0^\infty \E \lvert G_n(t)-G(t)\rvert^2\,dt = \alpha^2\int_0^\infty \Var(G_n(t))\,dt \\&= \frac{\alpha^2}{n} \int_0^\infty G(t)(1-G(t))\,dt \le \frac{\alpha^2}{n} \int_0^\infty G(t)\,dt = \frac{\alpha^2}{n}\E X_1.
\end{align*}
Hence
\[
\E\lVert\widehat f_\alpha -f\rVert_2^2 \le \frac{2\alpha^2}{n}\E X_1+ 2\lVert A^\dagger_\alpha G-f\rVert_2^2.
\]
Select $\alpha_n$ such that $\alpha_n\to \infty$ and $\alpha_n^2/n\to 0$, then Proposition~\ref{prop:approx-bounded} yields that $\E\lVert\widehat f_{\alpha_n} -f\rVert_2^2 \to 0$.
\end{proof}

We now proceed to derive the closed-form formula for $\widehat{f}_\alpha$. Since
\[
 (\mathcal{C}G_n)(z)
= \sqrt{\frac{2}{\pi}} \int_0^\infty G_n(y)\cos(zy)\,dy
= \sqrt{\frac{2}{\pi}}\frac1n \sum_{k=1}^n \int_0^{X_k}\cos(zy)\,dy
= \sqrt{\frac{2}{\pi}}\frac1n \sum_{k=1}^n \frac{\sin(zX_k)}{z},
\]
we obtain
\[
\widehat f_\alpha(y)
= \sqrt{\frac{2}{\pi}} \int_0^\alpha \sin(yz) z (\mathcal{C}G_n)(z)\,dz
= \frac{2}{\pi n}\sum_{k=1}^n \int_0^\alpha \sin(yz)\sin(zX_k)\,dz .
\]
Applying the identity $2\sin(yz)\sin(zX_k)=\cos\bigl(z(y-X_k)\bigr)-\cos\bigl(z(y+X_k)\bigr)$, we obtain
\[
\widehat f_\alpha(y)
= \frac{1}{\pi n}\sum_{k=1}^n
\Biggl(
\frac{\sin\bigl(\alpha(y-X_k)\bigr)}{y-X_k}
-
\frac{\sin\bigl(\alpha(y+X_k)\bigr)}{y+X_k}
\Biggr) = \frac{\alpha}{\pi n}\sum_{k=1}^n \Bigl(
\operatorname{sinc}\bigl(\alpha(y-X_k)\bigr)
-
\operatorname{sinc}\bigl(\alpha(y+X_k)\bigr) \Bigr)
.
\]
This formula shows that $\widehat f_\alpha$ is a boundary-corrected sinc-kernel estimator induced by the singular value decomposition of the integral operator $A$. The cutoff parameter $\alpha$ plays the role of a regularization parameter: Large values of $\alpha$ reduce the bias introduced by truncation, while small values suppress the instability caused by the unbounded inverse $A^\dagger$.

The risk of the estimator $\widehat{f}_\alpha$ is $R_n(\alpha,f)\coloneqq \E_f \lVert \widehat f_\alpha-f\rVert^2_{L^2(0,\infty)}$. Fix $\beta>0$. Consider the Sobolev space
\begin{equation}
    \label{eq:sobolev-space}
    \mathcal{H}(\beta) \coloneqq \Big\{ f\in L^2(0,\infty): \lVert f\rVert^2_{\mathcal{H(\beta)}} < \infty\Big\}, \quad \lVert f\rVert^2_{\mathcal{H(\beta)}}\coloneqq \int_0^\infty (1+z^2)^\beta  |(\mathcal{S}f)(z)|^2\,dz.
\end{equation}
Due to the structure of singular value decomposition, $\mathcal{H}(\beta)$ differs from the textbook Sobolev ball in a single respect: Smoothness is measured through the
right singular operator $\mathcal{S}$ rather than the Fourier transform \cite[p.~25]{Tsybakov2009}. It encodes smoothness as the decay of the coefficients of $f$ right singular coordinates of $A$, which is one of the standard approaches of defining smoothness classes in statistical inverse problems \cite[Section~2.2]{Cavalier2008}. 

In the literature of nonparametric estimation, the risk of a class of estimators is studied via the minimax bound over the probability densities in Sobolev ball \cite{Devroye1983},  \cite[p.~25 and Definition~1.12]{Tsybakov2009}
\[
\mathcal{B}(\beta,\gamma) \coloneqq \Big\{ f\in \mathcal{H}(\beta) : \lVert f\rVert^2_{\mathcal{H(\beta)}} \leq \gamma, f\ge 0,\, \int_0^\infty f=1\Big\}.
\]
In the rest of the section, we show that for every fixed $\beta>0$, there exists $\gamma^*>0$ such that
\[
\inf_{\alpha>0}\,\sup_{f\in\mathcal{B}(\beta,\gamma^*)}
R_n(\alpha,f)\asymp n^{-2\beta/(2\beta+1)}
\qquad\text{as }n\to\infty.
\]
Thus, the optimal worst-case risk  within $\{\widehat f_\alpha:\alpha>0\}$ has rate $n^{-2\beta/(2\beta+1)}$, matching the classical rate \cite{Stone1980}, \cite[Chapters~1--2]{Tsybakov2009}.

\begin{theorem}\label{thm:density-minimax-upperbound}
Fix $\beta>0$ and $\gamma>0$. Then
\[
\inf_{\alpha>0}\,\sup_{f\in\mathcal{B}(\beta,\gamma)}
R_n(\alpha,f)\lesssim n^{-2\beta/(2\beta+1)}
\qquad\text{as }n\to\infty.
\]
\end{theorem}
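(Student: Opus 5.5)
We bound the risk by a bias--variance decomposition and then choose $\alpha$ to balance the two terms. Write $A^\dagger_\alpha\coloneqq\mathcal S T_{z\1_{[0,\alpha]}}\mathcal C$, so that $\widehat f_\alpha=A^\dagger_\alpha G_n$. Because $\E G_n(t)=G(t)$ pointwise and $A^\dagger_\alpha$ is bounded and linear, the cross term vanishes after taking expectations. This gives the exact identity
\[
R_n(\alpha,f)=\E\lVert A^\dagger_\alpha(G_n-G)\rVert^2+\lVert A^\dagger_\alpha G-f\rVert^2 .
\]
To justify exchanging $\E$ with the bounded operator, we treat $G_n-G$ as a mean-zero $L^2$-valued random element with finite second moment; this is where $\E X_1<\infty$ enters. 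If we prefer to avoid this, we can simply keep the factor $2$ from the previous proposition, which does not affect the rate.

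\emph{Bias term.} From $A=\mathcal C T_{1/z}\mathcal S$ and $G=Af$ we get $\mathcal C G=T_{1/z}\mathcal S f$, hence $A^\dagger_\alpha G=\mathcal S T_{\1_{[0,\alpha]}}\mathcal S f$. Since $\mathcal S$ is unitary and self-adjoint,
\[
\lVert A^\dagger_\alpha G-f\rVert^2=\int_\alpha^\infty\lvert(\mathcal S f)(z)\rvert^2\,dz\le(1+\alpha^2)^{-\beta}\int_\alpha^\infty(1+z^2)^\beta\lvert\mathcal S f\rvert^2\,dz\le\gamma\,\alpha^{-2\beta},
\]
uniformly over $\mathcal B(\beta,\gamma)$.

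\emph{Variance term.} The operator-norm bound from the previous proposition gives only $\alpha^2\E X_1/n$, which is too crude and also not uniform, because $\E X_1$ is not controlled on the Sobolev ball. Instead we compute directly in singular coordinates. By unitarity of $\mathcal S$,
\[
\E\lVert A^\dagger_\alpha(G_n-G)\rVert^2=\int_0^\alpha z^2\,\E\lvert\mathcal C(G_n-G)(z)\rvert^2\,dz .
\]
The closed-form computation above gives $z\,\mathcal C G_n(z)=\sqrt{2/\pi}\,\frac1n\sum_k\sin(zX_k)$. Its centered version is an average of $n$ i.i.d.\ centered terms, each bounded in absolute value by $2\sqrt{2/\pi}$, so
\[
z^2\,\E\lvert\mathcal C(G_n-G)(z)\rvert^2=\frac1n\cdot\frac2\pi\Var\bigl(\sin(zX_1)\bigr)\le\frac{2}{\pi n}.
\]
Integrating over $[0,\alpha]$ yields a variance bound $\le 2\alpha/(\pi n)$, uniformly in $f$ and with no moment assumption. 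Here we must make sure that $z\,\mathcal C G$ equals $\sqrt{2/\pi}\,\E\sin(zX_1)$ for every density, not only on $\mathcal D(A)$; Tonelli and Fubini on the truncated interval $[0,\alpha]$ suffice for this.

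Combining the two bounds, $\sup_{f\in\mathcal B(\beta,\gamma)}R_n(\alpha,f)\le 2\alpha/(\pi n)+\gamma\alpha^{-2\beta}$. Choosing $\alpha=n^{1/(2\beta+1)}$ gives $\lesssim n^{-2\beta/(2\beta+1)}$, and the infimum over $\alpha$ is no larger. The main obstacle is the variance step. It requires rigorously identifying $\mathcal C G_n$ pointwise in $z$, since $G_n$ is in $L^2$ only when $X_k$ have finite values, which they do; the function $z\mapsto\sin(zX_k)/z$ is a valid $L^2$ cosine transform because $G_n$ is compactly supported. We also need to exchange the expectation with the $z$-integral via Tonelli. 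Once these are in place, the rate follows from the elementary balancing above.
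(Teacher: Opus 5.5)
Your proposal is correct and is essentially the paper's proof. Both work in the singular coordinates $\theta=\mathcal{S}f$ and $\widehat\theta_n(z)=z(\mathcal{C}G_n)(z)=\sqrt{2/\pi}\,\frac1n\sum_k\sin(zX_k)$, split the risk into $\int_\alpha^\infty\lvert\theta\rvert^2\,dz\le\gamma(1+\alpha^2)^{-\beta}$ plus $\frac{2}{\pi n}\int_0^\alpha\Var(\sin(zX_1))\,dz\le\frac{2\alpha}{\pi n}$, and take $\alpha\asymp n^{1/(2\beta+1)}$. The paper simplifies one point by never introducing $G$ or $A^\dagger_\alpha G$: it uses the unbiasedness of $\widehat\theta_n$ for $\mathcal{S}f$ directly, so the two pieces are orthogonal deterministically (they have disjoint supports in $z$), and the issue you flag never arises. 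That issue is that $f$ need not lie in $\mathcal{D}(A)$ on the ball, since $\E X_1$ may be infinite there and then $\mathcal{C}G$ is not even an absolutely convergent integral.
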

\begin{proof}
Define $\theta(z)\coloneqq (\mathcal{S}f)(z)$ and
\[
\widehat \theta_n(z)\coloneqq z(\mathcal{C}G_n)(z) = \sqrt{\frac{2}{\pi}}\frac{1}{n} \sum_{k=1}^n\sin(zX_k).
\]
In particular, $\widehat f_\alpha=A_\alpha^\dagger G_n=\mathcal{S}T_{z\1_{[0,\alpha]}} \mathcal{C}G_n=\mathcal{S}(\1_{[0,\alpha]}\widehat \theta_n)$. Note that $\widehat\theta_n$ is an unbiased estimator of $\theta$ since
\[
\E_f \widehat\theta_n(z) = \E_f\biggl[\sqrt{\frac{2}{\pi}}\frac{1}{n} \sum_{k=1}^n\sin(zX_k) \biggr] = \sqrt{\frac{2}{\pi}} \E _f \sin(zX_1) = \sqrt{\frac{2}{\pi}} \int_0^\infty  f(x)\sin(zx)\,dx = (\mathcal{S}f)(z) = \theta(z).
\]
Therefore,
\[
\E_f \lvert \widehat\theta_n(z)-\theta(z) \rvert^2 = \Var_f \widehat\theta_n(z) = \Var_f \Big( \sqrt{\frac{2}{\pi}}\frac{1}{n} \sum_{k=1}^n\sin(zX_k)\Big) = \frac{2}{\pi n}\Var_f \big(\sin(zX_1)\big).
\]
Since $\mathcal{S}$ is unitary,
\begin{align*}
	\lVert \widehat f_\alpha-f\rVert^2_{L^2(0,\infty)} &=  \lVert  \mathcal{S}(\1_{[0,\alpha]} \widehat\theta_n)-\mathcal{S}\theta\rVert^2_{L^2(0,\infty)} = \lVert  \1_{[0,\alpha]} \widehat\theta_n-\theta\rVert^2_{L^2(0,\infty)} \\
    &= \lVert  \1_{[0,\alpha]} (\widehat\theta_n-\theta)-\1_{(\alpha,\infty)} \theta\rVert^2_{L^2(0,\infty)} = \int_0^\alpha |\widehat\theta_n(z)-\theta(z)|^2\,dz + \int_\alpha^\infty |\theta(z)|^2\,dz,
\end{align*}
we obtain via Tonelli's theorem that
\begin{equation}\label{eq:density-decomp}
\begin{aligned}
    	 R_n(\alpha,f) &=\E_f  \lVert \widehat f_\alpha-f\rVert^2_{L^2(0,\infty)} = \int_0^\alpha \E_f|\widehat\theta_n(z)-\theta(z)|^2\,dz + \int_\alpha^\infty |\theta(z)|^2\,dz\\
     &=\int_\alpha^\infty |(\mathcal{S}f)(z)|^2\,dz + \frac{2}{\pi n} \int_0^\alpha \Var_f \big(\sin(zX_1)\big)\,dz \leq \int_\alpha^\infty |(\mathcal{S}f)(z)|^2 \,dz + \frac{2\alpha}{\pi n},
\end{aligned}
\end{equation}
where the last inequality follows from $\Var_f\big(\sin(zX_1)\big) \le \E_f\big[\sin^2(zX_1)\big] \le 1$.
For any $f\in \mathcal{B}(\beta,\gamma)$, we have
\begin{align*}
	\int_\alpha^\infty |(\mathcal{S}f)(z)|^2 \,dz &= \int_\alpha^\infty (1+z^2)^{-\beta} (1+z^2)^\beta|(\mathcal{S}f)(z)|^2 \,dz  \\&\le (1+\alpha^2)^{-\beta}  \int_\alpha^\infty  (1+z^2)^\beta|(\mathcal{S}f)(z)|^2 \,dz \le (1+\alpha^2)^{-\beta} \gamma. 
\end{align*}
Therefore, 
\[
\sup_{f\in\mathcal{B}(\beta,\gamma)} R_n(\alpha,f) \le  (1+\alpha^2)^{-\beta} \gamma + \frac{2\alpha}{\pi n}.
\]
Taking $\alpha_n\asymp n^{1/(2\beta+1)}$, we have
\[
\inf_{\alpha>0}\,\sup_{f\in\mathcal{B}(\beta,\gamma)} R_n(\alpha,f)  \lesssim n^{-\frac{2\beta}{2\beta+1}}.\qedhere
\]
\end{proof}

We now proceed to show the lower bound.
\begin{theorem}\label{thm:density-minimax-lowerbound}
For every fixed $\beta>0$, there exists $\gamma^*>0$ such that
\[
\inf_{\alpha>0}\,\sup_{f\in\mathcal{B}(\beta,\gamma^*)}
R_n(\alpha,f)\gtrsim n^{-2\beta/(2\beta+1)}
\qquad\text{as }n\to\infty.
\]
\end{theorem}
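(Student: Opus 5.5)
The plan is to start from the exact risk identity \eqref{eq:density-decomp}:
\[
R_n(\alpha,f)=\int_\alpha^\infty \lvert(\mathcal S f)(z)\rvert^2\,dz+\frac{2}{\pi n}\int_0^\alpha \Var_f\bigl(\sin(zX_1)\bigr)\,dz .
\]
Both terms are nonnegative, so for each $\alpha$ it suffices to exhibit one density $f_\alpha\in\mathcal B(\beta,\gamma^*)$ that makes either the bias or the variance term $\gtrsim n^{-2\beta/(2\beta+1)}$. I would split at $\alpha_n^*\coloneqq c\,n^{1/(2\beta+1)}$, treat the variance regime $\alpha\ge\alpha_n^*$ and the bias regime $\alpha<\alpha_n^*$ separately, and fix $\gamma^*$ at the end.

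\emph{Base density and variance regime.} Fix a base density $f_0\in C_c^\infty(0,\infty)$ with $f_0\ge0$, $\int f_0=1$, and $f_0\ge m>0$ on some interval $[a,b]$. Repeated integration by parts shows that $\mathcal Sf_0$ decays faster than any power, so $f_0\in\mathcal H(\beta)$. Next write
\[
\Var_{f_0}\sin(zX_1)=\tfrac12-\tfrac12\E_{f_0}\cos(2zX_1)-\tfrac{\pi}{2}\theta_0(z)^2,\qquad \theta_0=\mathcal S f_0 .
\]
By Riemann--Lebesgue the last two terms tend to $0$ as $z\to\infty$, and they are integrable because $f_0$ is smooth. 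Hence there is a constant $C_0$ with $\int_0^\alpha\Var\ge \alpha/4-C_0$ for all $\alpha$. For $\alpha\ge\alpha_n^*$ and $n$ large, the variance term is therefore $\gtrsim \alpha/n\gtrsim n^{-2\beta/(2\beta+1)}$.

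\emph{Bias regime, bounded $\alpha$.} If $\alpha\le\alpha_0$ for a fixed $\alpha_0$, then $\int_\alpha^\infty\lvert\mathcal Sf_0\rvert^2\ge\int_{\alpha_0}^\infty\lvert\mathcal Sf_0\rvert^2>0$. This positivity holds because $\mathcal Sf_0$ is real-analytic and not identically zero. The resulting constant dominates $n^{-2\beta/(2\beta+1)}$.

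\emph{Bias regime, $\alpha_0<\alpha<\alpha_n^*$.} Here I would perturb the base density. Take a bump $\psi\in C_c^\infty(a,b)$ and set $\omega=2\alpha$ and
\[
g_\omega(x)=\psi(x)\sin(\omega x)-\Bigl(\int\psi\sin(\omega\cdot)\Bigr)\frac{\psi(x)}{\int\psi}.
\]
The correction coefficient is $O(\omega^{-N})$, and by construction $\int g_\omega=0$. Define $f_\alpha\coloneqq f_0+\varepsilon\,\omega^{-\beta}g_\omega$. For $\varepsilon\le m/(2\lVert\psi\rVert_\infty)$ this is a density, since it is nonnegative and integrates to $1$.

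Since $\mathcal S(\psi\sin\omega\cdot)$ is essentially $\tfrac12\widehat{\psi}$ shifted to frequency $\omega$, it has $L^2$ mass $\asymp1$ in $[\omega-\alpha/2,\omega+\alpha/2]\subset(\alpha,\infty)$ and rapidly decaying tails. This gives two estimates. First,
\[
\lVert\varepsilon\omega^{-\beta}g_\omega\rVert_{\mathcal H(\beta)}^2\lesssim\varepsilon^2,
\]
uniformly in $\omega$. Second,
\[
\int_\alpha^\infty\lvert\mathcal S(\varepsilon\omega^{-\beta}g_\omega)\rvert^2\gtrsim\varepsilon^2\alpha^{-2\beta}.
\]
Applying $(u+v)^2\ge v^2/2-u^2$ pointwise, the bias term of $f_\alpha$ is at least $c\varepsilon^2\alpha^{-2\beta}-\int_\alpha^\infty\lvert\mathcal Sf_0\rvert^2$. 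The subtracted term decays superpolynomially, so for $\alpha>\alpha_0$ (enlarging $\alpha_0$ if needed) the bias is $\gtrsim\alpha^{-2\beta}\ge (\alpha_n^*)^{-2\beta}\asymp n^{-2\beta/(2\beta+1)}$.

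Finally set $\gamma^*\coloneqq2\lVert f_0\rVert_{\mathcal H(\beta)}^2+2\sup_\omega\lVert\varepsilon\omega^{-\beta}g_\omega\rVert^2_{\mathcal H(\beta)}$, so that $f_0$ and every $f_\alpha$ lie in $\mathcal B(\beta,\gamma^*)$. Combining the three cases bounds $\sup_f R_n(\alpha,f)$ below uniformly in $\alpha$, and taking the infimum over $\alpha$ gives the claim.

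The main obstacle is the perturbation step. It needs uniform-in-$\omega$ control of the weighted norm $\int(1+z^2)^\beta\lvert\mathcal S g_\omega\rvert^2\lesssim\omega^{2\beta}$, and, at the same time, a lower bound showing that a fixed fraction of the $L^2$ mass of $\mathcal Sg_\omega$ lies above $\alpha$. I would get both by writing $\mathcal S(\psi\sin\omega\cdot)(z)$ explicitly as a combination of $\widehat\psi(z\pm\omega)$ terms, extending $\psi$ oddly to $\mathbb R$, and then using the Schwartz decay of $\widehat\psi$.
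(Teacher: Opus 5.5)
Your proposal is correct and is essentially the paper's argument. It uses a fixed smooth density for the variance term, a bump modulated at frequency $2\alpha$ with amplitude $\asymp\alpha^{-\beta}$ for the bias term (controlled in $\mathcal H(\beta)$ by the frequency-shift/Peetre bound of Lemma~\ref{lem:density-lowerbound-2}), and a fixed density with nonvanishing high-frequency tail for bounded $\alpha$, with your explicit split at $\alpha\asymp n^{1/(2\beta+1)}$ merely repackaging Corollary~\ref{cor:density-lowerbound-3} and the final $t^{-2\beta}+t\ge1$ step. One small slip: because you modulate by $\sin(\omega x)$, you get $\mathcal S(\psi\sin\omega\,\cdot)(z)=\tfrac12\bigl[(\mathcal C\psi)(z-\omega)-(\mathcal C\psi)(z+\omega)\bigr]$, which involves the even (not odd) extension of $\psi$; this is harmless, since $\supp\psi\subset(a,b)$ with $a>0$ makes either extension smooth and compactly supported.
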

We break down the proof into several lemmas. 

\begin{lemma}\label{lem:density-lowerbound-1}
There exist \(\gamma_1>0\), \(f\in\mathcal B(\beta,\gamma_1)\), and
$c_1,c_2>0$ such that for all $\alpha\ge 1$,
\[
R_n(\alpha, f)\ge c_1\frac{\alpha}{n} -c_2\frac{1}{n}.
\]
\end{lemma}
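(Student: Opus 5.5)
We start from the exact risk identity \eqref{eq:density-decomp}, which gives the equality
\[
R_n(\alpha,f)=\int_\alpha^\infty \lvert(\mathcal S f)(z)\rvert^2\,dz+\frac{2}{\pi n}\int_0^\alpha \Var_f\bigl(\sin(zX_1)\bigr)\,dz .
\]
Discarding the nonnegative bias term, it suffices to exhibit one density $f$ with finite $\mathcal H(\beta)$-norm for which $\int_0^\alpha \Var_f(\sin(zX_1))\,dz\ge c\alpha - C$ for all $\alpha\ge 1$. We then take $\gamma_1\coloneqq\lVert f\rVert^2_{\mathcal H(\beta)}$, so that $f\in\mathcal B(\beta,\gamma_1)$ automatically, and set $c_1=2c/\pi$, $c_2=2C/\pi$.

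Next we write $\Var_f(\sin(zX_1))=\E\sin^2(zX_1)-(\E\sin(zX_1))^2$. Since $\sin^2 u=\tfrac12(1-\cos 2u)$, we get
\[
\E\sin^2(zX_1)=\tfrac12-\tfrac12\,\mathrm{Re}\,\phi(2z),\qquad \phi(t)\coloneqq\E e^{itX_1},
\]
and $\E\sin(zX_1)=\sqrt{\pi/2}\,\theta(z)$ with $\theta=\mathcal Sf$. Integrating over $[0,\alpha]$ then gives
\[
\int_0^\alpha\Var\,dz\ \ge\ \frac{\alpha}{2}-\frac14\int_0^{2\alpha}\lvert\phi(t)\rvert\,dt-\frac{\pi}{2}\lVert\mathcal Sf\rVert_2^2 .
\]
The last term equals $\frac{\pi}{2}\lVert f\rVert_2^2$ by unitarity of $\mathcal S$. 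So it suffices to choose $f$ with $\phi\in L^1(\R)$, which makes the middle term at most $\tfrac14\lVert\phi\rVert_{L^1}$. This yields the claim with $c=1/2$ and $C=\tfrac14\lVert\phi\rVert_{L^1}+\tfrac{\pi}{2}\lVert f\rVert_2^2$.

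It remains to choose a concrete density that is smooth enough. Take the Gamma density $f(x)=\frac{x^{m}e^{-x}}{m!}$ on $(0,\infty)$ with an integer $m$ large relative to $\beta$. Its characteristic function is $\phi(t)=(1-it)^{-(m+1)}$, which is integrable as soon as $m\ge1$. For membership in $\mathcal H(\beta)$, we use the fact that the odd extension $\tilde f$ of $f$ to $\R$ is $C^{m-1}$: the vanishing of $f$ to order $m$ at $0$ means the odd reflection has no derivative jumps of order below $m$. Hence the sine transform, which is up to constants the Fourier transform of $\tilde f$, decays like $\lvert z\rvert^{-(m+1)}$. More directly, one can compute $\mathcal Sf(z)=\sqrt{2/\pi}\,\mathrm{Im}(1-iz)^{-(m+1)}$ explicitly, which is $O(z^{-(m+1)})$ as $z\to\infty$ and bounded overall. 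Then $\int(1+z^2)^\beta\lvert\mathcal Sf\rvert^2\,dz<\infty$ whenever $2(m+1)-2\beta>1$, so any integer $m>\beta$ works.

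The main point needing care is the lower bound on the integrated variance, in particular making sure the mean term $(\E\sin(zX_1))^2$ integrates to a bounded constant rather than something growing in $\alpha$. The identity $\int_0^\infty\lvert\E\sin(zX_1)\rvert^2\,dz=\frac{\pi}{2}\lVert f\rVert_2^2$ settles this cleanly. The only other thing to verify is the explicit decay of $\mathrm{Im}(1-iz)^{-(m+1)}$, which is routine.
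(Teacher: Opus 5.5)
Your proof is correct and follows the paper's argument. You drop the bias term in \eqref{eq:density-decomp}, bound the integrated squared mean by $\lVert f\rVert_2^2$ via unitarity of $\mathcal S$, and show $\int_0^\alpha \E\sin^2(zX_1)\,dz \ge \alpha/2 - \mathrm{const}$. The differences are cosmetic: the paper takes $f\in C_c^\infty((1,2))$ and bounds the oscillatory term $\E[\sin(2\alpha X_1)/(4X_1)]$, which is exactly your $\frac14\int_0^{2\alpha}\mathrm{Re}\,\phi(t)\,dt$, by $\frac14\E[1/X_1]\le\frac14$, whereas you use a Gamma density with $m>\beta$ and bound that term by the $L^1$ norm of the characteristic function $\phi$.
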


\begin{proof}
Choose a nonnegative function \(f\in C_c^\infty((1,2))\) such that
\(\int_0^\infty f=1\), extend it by \(0\) outside \((1,2)\), and let $\gamma_1\coloneqq\lVert f\rVert_{\mathcal H(\beta)}^2$. By \eqref{eq:density-decomp}, we obtain that
\[
R_n(\alpha, f) \ge \frac{2}{n\pi} \int_0^\alpha  \E_{f}[\sin^2(zX_1)]\,dz - \frac{1}{n}\int_0^\alpha|\theta(z)|^2\,dz .
\]
For the first term, we apply Tonelli's theorem to obtain
\[
\int_0^\alpha  \E_{f}[\sin^2(zX_1)]\,dz =\E_{f} \int_0^\alpha  [\sin(zX_1)]^2\,dz  = \frac{\alpha}{2} - \E_{f}\biggl[\frac{\sin(2\alpha X_1)}{4X_1}\biggr].
\]
Since $X_1\in [1,2]$ almost surely, 
\[
\biggl \lvert \E_{f}\biggl[\frac{\sin(2\alpha X_1)}{4X_1}\biggr]\biggr \rvert \le \frac{1}{4}\E_{f}\biggl[ \frac{1}{X_1}\biggr] \le \frac{1}{4}, \quad \text{so } \int_0^\alpha  \E_{f}[\sin^2(zX_1)]\,dz\ge \frac{\alpha}{2} -\frac{1}{4}.
\]
Since $\int_0^\alpha \lvert \theta(z) \rvert^2\,dz \le  \lVert f\rVert^2_{L^2(0,\infty)}$, we obtain that
\[
R_n(\alpha, f) \ge \frac{1}{n} \Big(\frac{\alpha}{\pi} -\frac{1}{2\pi}  - \lVert f\rVert^2_{L^2(0,\infty)} \Big).
\]
It suffices to let $c_1 = \frac{1}{\pi}$ and $c_2=\frac{1}{2\pi}+\lVert f\rVert^2_{L^2(0,\infty)}$.
\end{proof}
\begin{lemma}\label{lem:density-lowerbound-2}
For $\beta>0$, there exists $\gamma^* > 0$, $c_3>0$, and a sequence of densities $\{f_k\}_{k=1}^\infty\subset \mathcal{B}(\beta,\gamma^*)$, such that for sufficiently large $k$,
\[
\int_{k/2}^\infty|(\mathcal{S}f_k)(z)|^2\, dz\ge c_3 k^{-2\beta}.
\]
\end{lemma}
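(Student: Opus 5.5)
We will build the densities $f_k$ as a fixed bump perturbed by a small high-frequency oscillation. Fix a nonnegative $f_\ast\in C_c^\infty((1,2))$ with $\int_0^\infty f_\ast=1$ and $f_\ast\ge c_0>0$ on $[5/4,7/4]$. Fix also a real $\psi\in C_c^\infty((5/4,7/4))$ with $\int\psi=0$ and $\psi\not\equiv 0$. Set
\[
f_k(x)\coloneqq f_\ast(x)+\delta\,k^{-\beta}\psi(x)\cos(kx),
\]
with $\delta>0$ small and fixed. The perturbation is bounded by $\delta\|\psi\|_\infty$ and supported where $f_\ast\ge c_0$, so $f_k\ge 0$ once $\delta\|\psi\|_\infty\le c_0$. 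Its integral is $\delta k^{-\beta}\int\psi(x)\cos(kx)\,dx$, which is not exactly zero. To fix this, we replace $\psi(x)\cos(kx)$ by $\psi(x)\cos(kx)-m_k\phi(x)$, where $\phi\in C_c^\infty((5/4,7/4))$ has $\int\phi=1$ and $m_k=\int\psi\cos(k\cdot)=O(k^{-N})$ for every $N$ (repeated integration by parts). This correction is negligible in every norm below. Hence each $f_k$ is a probability density supported in $(1,2)$.

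\emph{Step 1: the Sobolev norm is uniformly bounded.} Extend $f_k$ oddly to $\R$. Then $\mathcal{S}f_k$ is, up to constants, the Fourier transform of a $C_c^\infty$ function, and $\|f_k\|_{\mathcal H(\beta)}^2$ is comparable to the $H^\beta(\R)$ norm of that odd extension. The function $\psi(x)\cos(kx)=\tfrac12\psi(x)(e^{ikx}+e^{-ikx})$ has Fourier transform $\tfrac12\bigl(\widehat\psi(\xi-k)+\widehat\psi(\xi+k)\bigr)$, and $\widehat\psi$ is Schwartz. The odd reflection adds the same two bumps reflected, which we can absorb by symmetry. Using Peetre's inequality $(1+z^2)^\beta\lesssim (1+k^2)^\beta(1+(z-k)^2)^{\beta}$, we get
\[
\bigl\|\psi\cos(k\cdot)\bigr\|_{\mathcal H(\beta)}^2\lesssim k^{2\beta}.
\]
Consequently $\|f_k\|^2_{\mathcal H(\beta)}\le 2\|f_\ast\|^2_{\mathcal H(\beta)}+C\delta^2$. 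We then take $\gamma^\ast$ to be this bound.

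\emph{Step 2: the lower bound on the tail.} We write $\mathcal Sf_k=\mathcal Sf_\ast+\delta k^{-\beta}\mathcal S(\psi\cos(k\cdot))+O(k^{-N})$. Using $\sin(zx)\cos(kx)=\tfrac12\bigl(\sin((z+k)x)+\sin((z-k)x)\bigr)$, the middle term equals
\[
\tfrac12\sqrt{2/\pi}\,\bigl(\Psi(z-k)+\Psi(z+k)\bigr),\qquad \Psi(w)\coloneqq\int\psi(x)\sin(wx)\,dx.
\]
Here $\Psi$ is Schwartz-decaying and not identically zero, since $\psi\not\equiv0$. On $[k/2,\infty)$ the term $\Psi(z+k)$ and $\mathcal Sf_\ast(z)$ are $O(k^{-N})$ in $L^2$. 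The term $\Psi(z-k)$, however, contributes essentially all of its $L^2$ mass, namely $\int_{-k/2}^\infty|\Psi|^2\to\|\Psi\|_{L^2(\R)}^2>0$. By the triangle inequality in $L^2(k/2,\infty)$ we therefore get
\[
\int_{k/2}^\infty|\mathcal Sf_k|^2\ge \tfrac{\delta^2}{8\pi}\|\Psi\|_{L^2(\R)}^2\,k^{-2\beta}
\]
for $k$ large. This gives the lemma with $c_3=\delta^2\|\Psi\|^2_{L^2(\R)}/(8\pi)$.

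The main obstacle is Step 1: controlling the $\mathcal H(\beta)$ norm, which is defined through the sine transform, uniformly in $k$ for non-integer $\beta$. The route above relies on the odd extension, which identifies the sine transform with the Fourier transform, and on Peetre's inequality. Everything else reduces to rapid decay of Fourier transforms of smooth compactly supported functions.
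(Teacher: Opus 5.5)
Your proposal is correct and follows essentially the same route as the paper. The construction is a fixed smooth density perturbed by $\delta k^{-\beta}$ times a mean-corrected modulated bump $g(x)\cos(kx)$. The uniform $\mathcal H(\beta)$ bound comes from the odd extension and Peetre's inequality, and the tail bound comes from the translate of the bump's transform centred at $z=k$.

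The differences are only bookkeeping. You correct the mean with a separate bump $\phi$, and you use the full $L^2$ mass of $\Psi(\cdot-k)$ on $[k/2,\infty)$ rather than a window $[k-r,k+r]$. You also make explicit that $\int_{k/2}^\infty\lvert\mathcal S f_\ast\rvert^2=O(k^{-N})$, which the paper's final inequality uses tacitly.
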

\begin{proof}
Let $\rho$, $g\in \mathcal{H}(\beta)$ satisfy
\begin{enumerate}[\normalfont(i)]
\item $\rho\vert_{(1,2)}$, $g\vert_{(1,2)} \in C_c^\infty((1,2))$  and $\rho\lvert_{(0,1)\cup(2,\infty)} = g\lvert_{(0,1)\cup(2,\infty)}=0$;
\item $\rho$ is a density;
\item $\rho\ge c_0$ in $\supp g$ for some $c_0>0$;
\item $g\not\equiv 0$ and $\int_0^\infty g = 0$.
\end{enumerate}
For $k\ge 1$, let $c_k\coloneqq \int_0^\infty g(x)\cos(kx)\,dx$ and $h_k(x)\coloneqq g(x) \cos(kx) -c_k \rho(x)$. By construction, $\int_0^\infty h_k(x)\,dx = 0$. Furthermore, for $x\in\supp g$, 
\[
\lvert h_k(x) \rvert  \le \lvert g(x)\rvert + \lvert c_k\rvert \lvert \rho(x) \rvert \le \lVert g\rVert_\infty + \lvert c_k \rvert \lVert \rho\rVert_\infty 
\]
By the Riemann--Lebesgue lemma $\lvert c_k \rvert\to 0$, so for sufficiently large $k$, there exists $M >0$ such that $|h_k(x)|\le M$ for all $x\in \supp g$. Fix $0<\delta<c_0/2M$ and let $f_k\coloneqq \rho+\delta k^{-\beta} h_k$. By construction, $\int_0^\infty f_k=1$ and for all $x \in \supp g$,
\[
f_k(x) \ge \rho(x) -\delta k^{-\beta}  \lvert h_k(x)\rvert \ge c_0- \delta M > 0.
\]
For $x\notin \supp g$, $f_k(x)=\rho(x)(1-\delta k^{-\beta} c_k)\ge 0$ for sufficiently large $k$ by Riemann--Lebesgue lemma again. Therefore, $f_k$ is a density function.

We now proceed to show that $f_k \in \mathcal{B}(\beta,\gamma^*)$ for some $\gamma^* > 0$. Following the literature in probability theory, let $\mathcal{F}$ denote the renormalized Fourier transform
\[
\mathcal{F}\psi(z)=\frac{1}{\sqrt{2\pi}}\int_{\R}\psi(x)\,e^{izx}\,dx .
\]
For a function $\phi$ on $(0,\infty)$, let
$\tilde\phi\coloneqq \phi(x)\1_{\{x\ge 0\}}-\phi(-x)\1_{\{x<0\}}$ be its odd extension to $\R$. Since $\tilde\phi(x)\cos(zx)$ is odd and $\tilde\phi(x)\sin(zx)$ is even, we obtain
\[
\mathcal{F}\tilde\phi(z)
=\frac{i}{\sqrt{2\pi}}\int_{\R}\tilde\phi(x)\sin(zx)\,dx
=\frac{2i}{\sqrt{2\pi}}\int_0^\infty \phi(x)\sin(zx)\,dx
=i\,(\mathcal{S}\phi)(z),
\]
Applying it to $g_0(x) =g(x)\cos(kx)$, whose odd extension is
$\tilde g_0(x) =\tilde g(x)\cos(kx)$, we obtain that
\[
\lvert (\mathcal{S}g_0)(\lvert z \rvert)\rvert^2 =  \lvert (\mathcal{F} \tilde g_0)(z)\rvert^2 \le \frac{1}{2} \lvert (\mathcal{F}\tilde g)(z-k)\rvert^2 + \frac{1}{2}\lvert (\mathcal{F}\tilde g)(z+k)\rvert^2
\]
Therefore
\begin{align*}
\lVert g_0 \rVert_{\mathcal{H}(\beta)}^2 &= \int_0^\infty(1+z^2)^\beta \lvert (\mathcal{S}g_0)(z) \rvert^2\,dz\\
&\le \frac{1}{4}\int_\R (1+z^2)^\beta \lvert(\mathcal{F}\tilde g)(z-k)\rvert^2\,dz + \frac{1}{4}\int_\R(1+z^2)^\beta \lvert(\mathcal{F}\tilde g)(z+k)\rvert^2\,dz
\end{align*}
For first the first term, we apply the change of variable $u=z-k$ and Peetre's inequality \cite[Lemma~1.18]{saintraymond1991} to obtain
\begin{align*}
 \int_\R (1+z^2)^\beta & \lvert (\mathcal{F}\tilde g)(z-k)\rvert^2\,dz = \int_\R (1+(u+k)^2)^\beta \lvert(\mathcal{F}\tilde g)(u)\rvert^2\,du \\&\le 2^\beta (1+k^2)^\beta \int_\R (1+u^2)^\beta \lvert(\mathcal{F}\tilde g)(u)\rvert^2\,du \lesssim k^{2\beta}.
\end{align*}
Using the change of variable $u=z+k$ to the second term, we obtain $\lVert g_0\rVert_{\mathcal{H}(\beta)}\lesssim k^\beta$. Since $h_k=g_0-c_k\rho$, the triangle inequality gives 
\[
\lVert h_k\rVert_{\mathcal{H}(\beta)}\le \lVert g_0\rVert_{\mathcal{H}(\beta)}+\lvert c_k\rvert \lVert \rho\rVert_{\mathcal{H}(\beta)}\lesssim k^\beta.
\]
Therefor, there exists $\gamma^* > 0$ such that $f_k\in \mathcal{B}(\beta,\gamma^*)$ for sufficiently large $k$ as
\[
\lVert f_k\rVert_{\mathcal{H}(\beta)}\le \lVert \rho\rVert_{\mathcal{H}(\beta)} + \delta k^{-\beta} \lVert h_k\rVert_{\mathcal{H}(\beta)}=O(1).
\]

Since $g\not\equiv 0$, $\tilde g\not\equiv 0$ and $\mathcal{F}\tilde g\not\equiv 0$. Therefore there is $r>0$ s.t. $\int_{\lvert z \rvert\le r} \lvert(\mathcal{F} \tilde g)(z)\rvert^2\,dz>0$. Let $k>2r$ and $I_k=[k-r,k+r] \subset [k/2, \infty)$. Applying the inequality $\lvert a-b \rvert^2\ge \frac{1}{2}\lvert a \rvert^2- \lvert b \rvert^2$, and let $u=z-k$ and $v=z+k$, we obtain 
\begin{align*}
\int_{k/2}^\infty |(\mathcal{S}g_0)(z)|^2\,dz &\geq\int_{I_k} \lvert (\mathcal{S}g_0)(z)\rvert^2\,dz \ge \frac{1}{8} \int_{I_k} \lvert (\mathcal{F}\tilde g)(z-k)\rvert^2 \,dz-\frac{1}{4} \int_{I_k} \lvert (\mathcal{F}\tilde g)(z+k)\rvert^2\,dz \\
&=\frac{1}{8} \int_{\lvert u \rvert\le r} \lvert(\mathcal{F}\tilde g)(u)\rvert^2 \,du -\frac{1}{4} \int_{\lvert v-2k \rvert\le r} \lvert (\mathcal{F}\tilde g)(v)\rvert^2\,dv.
\end{align*}
Since $\mathcal{F}\tilde g\in L^2(\mathbb{R})$, $\int_{|v-2k|\le r}|(\mathcal{F}\tilde g)(v)|^2\,dv\to0$ as $k\to\infty$. For sufficiently large $k$ it is at most $b_0\coloneqq \frac{1}{4} \int_{|u|\le r}|(\mathcal{F}\tilde g)(u)|^2\,du$, hence 
\[
\int_{k/2}^\infty |(\mathcal{S}g_0)(z)|^2\,dz \ge \frac{b_0}{4}>0.
\]
Because $\lvert c_k \rvert\to0$, for all sufficiently large $k$ we have $\lvert c_k \rvert^2\int_0^\infty\rho(x)^2\,dx\le b_0/16$. Therefore,
\begin{align*}
\int_{k/2}^\infty \lvert (\mathcal{S} h_k)(z)\rvert^2\,dz &\ge \frac{1}{2} \int_{k/2}^\infty \lvert (\mathcal{S}g_0)(z) \rvert^2\,dz- \lvert c_k\rvert^2 \int_{k/2}^\infty \lvert (\mathcal{S}\rho)(z) \rvert^2\,dz \\
&\ge \frac{1}{2} \int_{k/2}^\infty \lvert (\mathcal{S}g_0)(z) \rvert^2\,dz- \lvert c_k \rvert^2 \int_{0}^\infty \rho(x)^2\,dx  \ge \frac{b_0}{16}>0.
\end{align*}
Therefore,
\[
  \int_{k/2}^\infty |(\mathcal{S}f_k)(z)|^2\,dz  \ge \frac{1}{2} \delta^2k^{-2\beta}  \int_{k/2}^\infty |(\mathcal{S}h_k)(z)|^2\,dz - \int_{k/2}^\infty |(\mathcal{S}\rho)(z)|^2\,dz \ge c_3k^{-2\beta}.\qedhere
\]
\end{proof}

\begin{corollary}\label{cor:density-lowerbound-3}
For every $\beta>0$, there exist constants $\gamma^*>0$, $\alpha_0\ge1$, and $c^*,C^*>0$ independent of
$n$ and $\alpha$, such that for every $n\ge1$ and
$\alpha\ge\alpha_0$,
\[
\sup_{f\in\mathcal{B}(\beta,\gamma^*)}R_n(\alpha,f)
\ge c^*\alpha^{-2\beta}+c^*\frac{\alpha}{n}-\frac{C^*}{n}.
\]
\end{corollary}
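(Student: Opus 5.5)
We combine Lemma~\ref{lem:density-lowerbound-1} and Lemma~\ref{lem:density-lowerbound-2} through the exact risk decomposition \eqref{eq:density-decomp}. First fix $\gamma^*$ to be the maximum of $\gamma_1$ from Lemma~\ref{lem:density-lowerbound-1} and the constant from Lemma~\ref{lem:density-lowerbound-2}. Since $\mathcal{B}(\beta,\gamma)$ increases with $\gamma$, both the single density $f$ of Lemma~\ref{lem:density-lowerbound-1} (call it $f^\circ$) and all $f_k$ with $k\ge k_0$ lie in $\mathcal{B}(\beta,\gamma^*)$. The supremum over the ball then dominates both $R_n(\alpha,f^\circ)$ and $R_n(\alpha,f_k)$ for any $k\ge k_0$. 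We also use that $\max\{a,b\}\ge (a+b)/2$.

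For the variance part, Lemma~\ref{lem:density-lowerbound-1} gives, for $\alpha\ge1$,
\[
\sup_{f\in\mathcal{B}(\beta,\gamma^*)}R_n(\alpha,f)\ge R_n(\alpha,f^\circ)\ge c_1\frac{\alpha}{n}-\frac{c_2}{n}.
\]

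For the bias part, \eqref{eq:density-decomp} shows that $R_n(\alpha,f)\ge\int_\alpha^\infty|(\mathcal{S}f)(z)|^2\,dz$, because the variance term is nonnegative. Given $\alpha$, we pick $k=\lfloor 2\alpha\rfloor$, so that $k/2\le\alpha$. We want the tail integral from $\alpha$ rather than from $k/2$, and there are two ways to arrange this. One is to choose $k=\lceil 2\alpha\rceil$ together with the window $I_k=[k-r,k+r]$ from the proof of Lemma~\ref{lem:density-lowerbound-2}. Since $k-r\ge\alpha$ once $\alpha\ge 2r$, the integral over $I_k$ lies inside $[\alpha,\infty)$, and the same argument gives $\int_\alpha^\infty|(\mathcal{S}f_k)|^2\ge c_3k^{-2\beta}$. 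The other is to reuse the lemma with a rescaled index. Either way we get
\[
\sup_{f\in\mathcal{B}(\beta,\gamma^*)}R_n(\alpha,f)\ge c_3(2\alpha+1)^{-2\beta}\ge c_3 3^{-2\beta}\alpha^{-2\beta}\quad\text{for }\alpha\ge\max\{1,2r\},
\]
provided $\alpha_0$ is large enough that $\lceil 2\alpha\rceil\ge k_0$.

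Averaging the two bounds gives
\[
\sup_{f\in\mathcal{B}(\beta,\gamma^*)}R_n(\alpha,f)\ge \tfrac12 c_3 3^{-2\beta}\alpha^{-2\beta}+\tfrac{c_1}{2}\frac{\alpha}{n}-\frac{c_2}{2n}.
\]
We then set $c^*=\min\{c_33^{-2\beta},c_1\}/2$ and $C^*=c_2/2$. None of these constants depends on $n$ or $\alpha$, because Lemma~\ref{lem:density-lowerbound-2} does not involve $n$ and Lemma~\ref{lem:density-lowerbound-1} holds uniformly in $n$.

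The main obstacle is the mismatch between the tail starting at $\alpha$ and the one starting at $k/2$. As stated, Lemma~\ref{lem:density-lowerbound-2} bounds $\int_{k/2}^\infty$, which lies below $\alpha$ when $k\approx 2\alpha$. So we must either reopen its proof to use the concentrated window $I_k\subset[k-r,k+r]$, or take $k$ of order $2\alpha$ so that $k/2\ge\alpha$. The second choice only costs a constant factor $2^{-2\beta}$ in $k^{-2\beta}$ relative to $\alpha^{-2\beta}$, and I would take it as the clean route: choose $k=\lceil 2\alpha\rceil$, so $k/2\ge\alpha$ and $\int_\alpha^\infty\ge\int_{k/2}^\infty\ge c_3k^{-2\beta}\ge c_3(3\alpha)^{-2\beta}$.
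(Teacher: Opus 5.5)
Your proposal is correct and follows the paper's proof: you enlarge the radius so that the density from Lemma~\ref{lem:density-lowerbound-1} and the $f_k$ from Lemma~\ref{lem:density-lowerbound-2} lie in one ball, take $k=\lceil 2\alpha\rceil$ so that $k/2\ge\alpha$ and $k\le 3\alpha$, and combine the bias and variance lower bounds via $\max\{a,b\}\ge (a+b)/2$, which gives the same constants $c^*=\tfrac12\min\{c_3 3^{-2\beta},c_1\}$ and $C^*=c_2/2$. The false start with $k=\lfloor 2\alpha\rfloor$ and the detour through the window $I_k$ are unnecessary, since the $\lceil 2\alpha\rceil$ route you settle on is exactly the paper's argument.
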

\begin{proof}
Up to manipulating the radius, we assume without loss of generality the densities in both Lemma~\ref{lem:density-lowerbound-1} and Lemma~\ref{lem:density-lowerbound-2} belong to the same ball. By
Lemma~\ref{lem:density-lowerbound-2}, for sufficiently large $\alpha$, $k=\lceil2\alpha\rceil$. Since $k/2\ge\alpha$ and $k\le3\alpha$, \eqref{eq:density-decomp}
gives
\[
\sup_{f\in\mathcal{B}(\beta,\gamma^*)}R_n(\alpha,f)
\ge R_n(\alpha,f_k)
\ge\int_{k/2}^\infty|(\mathcal{S}f_k)(z)|^2\,dz
\ge c_3k^{-2\beta}
\ge c_3\,3^{-2\beta}\alpha^{-2\beta}.
\]
On the other hand, Lemma~\ref{lem:density-lowerbound-1} gives
\[
\sup_{f\in\mathcal{B}(\beta,\gamma^*)}R_n(\alpha,f)
\ge c_1\frac{\alpha}{n}-\frac{c_2}{n}.
\]
Taking the maximum of these two bounds yields
\[
\sup_{f\in\mathcal{B}(\beta,\gamma^*)}R_n(\alpha,f)
\ge\frac{c_3\,3^{-2\beta}}{2}\alpha^{-2\beta}
  +\frac{c_1}{2}\frac{\alpha}{n}-\frac{c_2}{2n}.
\]
The result follows with
$c^*\coloneqq\tfrac12\min\{c_3\,3^{-2\beta},c_1\}$ and
$C^*\coloneqq c_2/2$.
\end{proof}

\begin{proof}[Proof of Theorem~\ref{thm:density-minimax-lowerbound}]
Let $\gamma^*$, $\alpha_0$, $c^*$, and $C^*$ be given by
Corollary~\ref{cor:density-lowerbound-3}, and $p\coloneqq2\beta/(2\beta+1)\in(0,1)$. For every $\alpha\ge\alpha_0$, set $t\coloneqq\alpha n^{-1/(2\beta+1)}$. Then
\[
\alpha^{-2\beta}+\frac{\alpha}{n}
=n^{-p}\bigl(t^{-2\beta}+t\bigr)\ge n^{-p},
\]
because $t^{-2\beta}\ge1$ when $0<t\le1$, while $t\ge1$
when $t\ge1$. By Corollary~\ref{cor:density-lowerbound-3},
\[
\sup_{f\in\mathcal{B}(\beta,\gamma^*)}R_n(\alpha,f)
\ge c^*n^{-p}-\frac{C^*}{n}
\qquad\text{for every }\alpha\ge\alpha_0.
\]
Since $p<1$, for all sufficiently large $n$,
\[
\inf_{\alpha\ge\alpha_0}
\sup_{f\in\mathcal{B}(\beta,\gamma^*)}R_n(\alpha,f)
\ge\frac{c^*}{2}n^{-p}.
\]

For $0<\alpha\le\alpha_0$, choose $k_*$ sufficiently large that $k_*/2\ge\alpha_0$ and Lemma~\ref{lem:density-lowerbound-2} applies to $f_{k_*}$.
By \eqref{eq:density-decomp},
\[
  \sup_{f\in\mathcal{B}(\beta,\gamma^*)}R_n(\alpha,f)
  \ge R_n(\alpha,f_{k_*}) \ge\int_\alpha^\infty|(\mathcal{S}f_{k_*})(z)|^2\,dz \ge\int_{k_*/2}^\infty|(\mathcal{S}f_{k_*})(z)|^2\,dz \ge c_3k_*^{-2\beta}>0.
\]
Combining the two ranges, for all sufficiently large $n$,
\[
\inf_{\alpha>0}\sup_{f\in\mathcal{B}(\beta,\gamma^*)}
R_n(\alpha,f)
\ge\min\{c_3k_*^{-2\beta},c^*/2\}\,n^{-p}. \qedhere
\]
\end{proof}

\section{Singular value decomposition of Black--Scholes operator}\label{sec:finance}

Consider the Black--Scholes model \cite{Black1973, Merton1973} from mathematical finance. The market consists of a risk-free asset with interest rate $r > 0$ and a risky asset whose price follows a geometric Brownian motion with volatility $\sigma > 0$. A European option written on the risky asset pays $h(S)$ at the maturity $T$, where $S$ is the asset price at maturity; the function $h$ is called the \emph{payoff}. By a hedging argument, the price $V(S,t)$ of the option, when the asset price is $S$ at calendar time $t\le T$, solves the Black--Scholes equation
\begin{equation}\label{eq:bs-pde}
  \partial_t V + \frac{1}{2}\sigma^2 S^2\,\partial_S^2 V + rS\,\partial_S V - rV = 0,
  \qquad V(S,T) = h(S).
\end{equation}
Writing $\tau = T-t$ for the \emph{time to maturity} and
$C(S,\tau) \coloneqq V(S,T-\tau)$, we rewrite \eqref{eq:bs-pde} as the forward Cauchy problem
\[\partial_\tau C = A_{\mathrm{BS}}\,C,\qquad C(\,\cdot\,,0)=h,
\qquad\text{where}\quad
A_{\mathrm{BS}} = \frac{1}{2}\sigma^2 S^2\,\partial_S^2 + rS\,\partial_S - rI.\]
Since $S\partial_S$ satisfies $S\partial_S\,S^\beta = \beta S^\beta$ for every $\beta\in\C$, and the product rule gives 
\begin{equation}\label{eq:bs-euler}
  A_{\mathrm{BS}}
  = \frac{1}{2}\sigma^2\bigl[(S\partial_S)^2 - S\partial_S\bigr] + r\,S\partial_S - rI
  = \frac{1}{2}\sigma^2 (S\partial_S)^2 + \bigl(r-\frac{1}{2}\sigma^2\bigr)S\partial_S - rI.
\end{equation}
As one may notice, $A_{\mathrm{BS}}$ is not yet an unbounded operator since no Hilbert space or dense domain has been fixed. We now proceed to construct a family of closed, densely defined operators such that each $A^{(\alpha)}$ agrees with $A_{\mathrm{BS}}$ on $C_c^\infty((0,\infty))$. For $\alpha\in\R$, let
\[
  \mathcal H_\alpha \coloneqq L^2\bigl((0,\infty),\,S^{2\alpha-1}\,dS\bigr),
  \quad
  (Q_\alpha f)(x) \coloneqq e^{\alpha x}f(e^x),\quad S=e^x,
\]
where $\alpha$ is referred to as the exponential \emph{damping} parameter \cite[p.~69]{CarrMadan1999}.

\begin{lemma}\label{lem:Talpha-unitary}
For every $\alpha\in\R$, $Q_\alpha\colon \mathcal H_\alpha\to L^2(\R)$ is a unitary operator with $(Q_\alpha^{-1}g)(S)=S^{-\alpha}g(\log S)$ and
\begin{equation}\label{eq:euler-conj}
  Q_\alpha\,(S\partial_S)\,Q_\alpha^{-1} = \partial_x - \alpha .
\end{equation} 
\end{lemma}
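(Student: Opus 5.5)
The plan is to verify the three claims directly: that $Q_\alpha$ is isometric, that it is onto with the stated inverse, and that it satisfies the conjugation identity \eqref{eq:euler-conj}.

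\emph{Isometry.} For $f\in\mathcal H_\alpha$, substitute $S=e^x$, so that $dS=e^x\,dx$. Then
\[
\lVert Q_\alpha f\rVert_{L^2(\R)}^2=\int_\R e^{2\alpha x}\lvert f(e^x)\rvert^2\,dx=\int_0^\infty S^{2\alpha}\lvert f(S)\rvert^2\,\frac{dS}{S}=\lVert f\rVert_{\mathcal H_\alpha}^2 .
\]
Measurability is preserved because $x\mapsto e^x$ is a diffeomorphism $\R\to(0,\infty)$. Hence $Q_\alpha$ is a linear isometry.

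\emph{Surjectivity.} Define $R_\alpha g(S)\coloneqq S^{-\alpha}g(\log S)$. The same change of variables, run backwards, shows that $R_\alpha$ maps $L^2(\R)$ isometrically into $\mathcal H_\alpha$. A direct check gives $Q_\alpha R_\alpha g(x)=e^{\alpha x}e^{-\alpha x}g(x)=g(x)$, and likewise $R_\alpha Q_\alpha f=f$. So $Q_\alpha$ is a bijective isometry, hence unitary, with $Q_\alpha^{-1}=Q_\alpha^*=R_\alpha$.

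\emph{Conjugation identity.} On smooth functions, take $g\in C_c^\infty(\R)$ and set $f=Q_\alpha^{-1}g=S^{-\alpha}g(\log S)$. The product rule gives
\[
S\partial_S f=-\alpha S^{-\alpha}g(\log S)+S^{-\alpha}g'(\log S).
\]
Applying $Q_\alpha$ then yields $g'-\alpha g$, so $Q_\alpha(S\partial_S)Q_\alpha^{-1}g=(\partial_x-\alpha)g$.

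The main obstacle is making \eqref{eq:euler-conj} precise as an operator identity rather than only on test functions. I would \emph{define} $S\partial_S$ on $\mathcal H_\alpha$ in the weak/distributional sense, with maximal domain $\{f\in\mathcal H_\alpha: S\partial_S f\in\mathcal H_\alpha\}$. The chain rule for distributions under the diffeomorphism $x\mapsto e^x$, together with multiplication by the smooth nonvanishing factor $e^{\alpha x}$, then transports weak derivatives: $f$ lies in this domain if and only if $Q_\alpha f\in H^1(\R)$. Consequently the domains correspond under $Q_\alpha$, and the identity holds with $\partial_x$ on $H^1(\R)$. Equivalently, both sides are closures of their restrictions to the cores $C_c^\infty((0,\infty))$ and $C_c^\infty(\R)$, and $Q_\alpha$ maps the first core onto the second. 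Since unitary conjugation preserves closures, equality on the cores extends to the closed operators.
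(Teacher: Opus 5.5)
Your proposal is correct and follows the paper's proof almost line for line: the change of variables $S=e^x$ gives the isometry, the explicit inverse $S^{-\alpha}g(\log S)$ gives surjectivity, and the product rule on smooth $g$ gives \eqref{eq:euler-conj}. Your last paragraph goes further than the paper, which only checks the identity on smooth functions: you identify the maximal weak domain of $S\partial_S$ on $\mathcal H_\alpha$ with $Q_\alpha^{-1}H^1(\R)$ via the distributional chain rule (or, equivalently, via the cores $C_c^\infty((0,\infty))$ and $C_c^\infty(\R)$), and this correctly upgrades \eqref{eq:euler-conj} to an equality of closed operators.
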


\begin{proof}
For $f\in\mathcal H_\alpha$, the change of variable $S=e^x$ gives
\[
  \lVert Q_\alpha f\rVert_{L^2(\R)}^2
  = \int_\R e^{2\alpha x}\lvert f(e^x)\rvert^2\,dx
  = \int_0^\infty S^{2\alpha}\lvert f(S)\rvert^2\,\frac{dS}{S}
  = \int_0^\infty \lvert f(S)\rvert^2 S^{2\alpha-1}\,dS
  = \lVert f\rVert_{\mathcal H_\alpha}^2,
\]
so $Q_\alpha$ is isometric. Moreover, $Q_\alpha$ is invertible since for $g\in L^2(\R)$, $f(S) = S^{-\alpha}g(\log S) \in \mathcal H_\alpha$ and $Q_\alpha f = g$. Thus $Q_\alpha$ is unitary. For~\eqref{eq:euler-conj}, let $g\in L^2(\R)$ be smooth and set $f=Q_\alpha^{-1}g$. Then
\[
  (S\partial_S f)(S)
  = S\,\frac{d}{dS}\bigl[S^{-\alpha}g(\log S)\bigr]
  = -\alpha S^{-\alpha}g(\log S) + S^{-\alpha}g'(\log S),
\]
and applying $Q_\alpha$,
\[
  \bigl(Q_\alpha S\partial_S Q_\alpha^{-1} g\bigr)(x)
  = e^{\alpha x}\,e^{-\alpha x}\bigl[g'(x)-\alpha g(x)\bigr]
  = (\partial_x-\alpha)g(x).\qedhere
\]
\end{proof}

By~\eqref{eq:euler-conj}, consider
\[
  B_\alpha \coloneqq Q_\alpha A_{\mathrm{BS}}Q_\alpha^{-1} = \frac{1}{2}\sigma^2\partial_x^2
  + \bigl(r-\sigma^2(\alpha+\frac{1}{2})\bigr)\partial_x
  + \Bigl[\frac{1}{2}\sigma^2(\alpha^2+\alpha)-r(\alpha+1)\Bigr]I .
\]
For convenience in computation, we normalize the Fourier transform on $L^2(\R)$ here so that 
\begin{equation}\label{eq:fourier-conv}
  (\mathcal F g)(k) = \frac{1}{\sqrt{2\pi}}\int_\R g(x)e^{-ikx}\,dx,
  \qquad \mathcal F\,\partial_x\,\mathcal F^{-1} = T_{ik}.
\end{equation}
In particular, $\mathcal F B_\alpha\mathcal F^{-1}=T_{\rho_\alpha}$, where  $\rho_\alpha$ is the \emph{symbol} of $B_\alpha$~\cite[Section~8.3, p.~271]{Hormander1} 
\begin{equation}\label{eq:rho-alpha}
  \rho_\alpha(k)
  = -\frac{1}{2}\sigma^2 k^2
  + i\bigl(r-\sigma^2(\alpha+\frac{1}{2})\bigr)k
  + c_\alpha,
  \qquad
  c_\alpha \coloneqq (\alpha+1)\bigl(\frac{1}{2}\sigma^2\alpha - r\bigr).
\end{equation}
A direct verification gives 
\begin{equation}\label{eq:contour-shift}
  \rho_\alpha(k) = \rho_0(k+i\alpha),
  \qquad
  \rho_0(k) = -\frac{1}{2}\sigma^2 k^2 + i\bigl(r-\frac{1}{2}\sigma^2\bigr)k - r.
\end{equation}

\begin{lemma}\label{lem:bs-domain}
For every $\alpha\in\R$, $\mathcal D(T_{\rho_\alpha}) = \mathcal F\bigl(H^2(\R)\bigr)$.
\end{lemma}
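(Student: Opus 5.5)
The plan is to show that $\mathcal D(T_{\rho_\alpha})=\{g\in L^2(\R):\rho_\alpha g\in L^2(\R)\}$ coincides with $\{g\in L^2(\R):(1+k^2)g\in L^2(\R)\}$. Once this is established, the conclusion follows from the Fourier characterization of Sobolev spaces. Since $\mathcal F$ is unitary with the normalization~\eqref{eq:fourier-conv}, we have
\[
H^2(\R)=\{u\in L^2(\R):(1+k^2)\mathcal Fu\in L^2(\R)\},
\]
and therefore $\mathcal F(H^2(\R))=\{g\in L^2(\R):(1+k^2)g\in L^2(\R)\}$.

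The key step is a two-sided pointwise comparison
\[
c\,(1+k^2)\le \lvert\rho_\alpha(k)\rvert + C \quad\text{and}\quad \lvert\rho_\alpha(k)\rvert\le C'(1+k^2)\quad\text{for all }k\in\R,
\]
with constants depending only on $\alpha,\sigma,r$. The upper bound is immediate from~\eqref{eq:rho-alpha}, because $\rho_\alpha$ is a quadratic polynomial in $k$: we get $\lvert\rho_\alpha(k)\rvert\le \tfrac12\sigma^2k^2+\lvert r-\sigma^2(\alpha+\tfrac12)\rvert\,\lvert k\rvert+\lvert c_\alpha\rvert$. For the lower bound, I use $\lvert\rho_\alpha(k)\rvert\ge\lvert\Re\rho_\alpha(k)\rvert=\lvert -\tfrac12\sigma^2k^2+c_\alpha\rvert\ge \tfrac12\sigma^2k^2-\lvert c_\alpha\rvert$. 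This requires $\sigma>0$, which is assumed, so that the leading coefficient is nonzero. It follows that $1+k^2\le \tfrac{2}{\sigma^2}\bigl(\lvert\rho_\alpha(k)\rvert+\lvert c_\alpha\rvert\bigr)+1$.

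With these bounds the two inclusions follow directly. If $(1+k^2)g\in L^2$, then $\lvert\rho_\alpha g\rvert\le C'(1+k^2)\lvert g\rvert$, so $\rho_\alpha g\in L^2$. Conversely, if $g\in L^2$ and $\rho_\alpha g\in L^2$, then
\[
(1+k^2)\lvert g\rvert\le \tfrac{2}{\sigma^2}\lvert\rho_\alpha g\rvert+\bigl(\tfrac{2}{\sigma^2}\lvert c_\alpha\rvert+1\bigr)\lvert g\rvert\in L^2 .
\]
Combining the two inclusions with the Sobolev characterization above gives $\mathcal D(T_{\rho_\alpha})=\mathcal F(H^2(\R))$.

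There is no genuine obstacle here. The only point requiring care is the lower bound, where the real part of $\rho_\alpha$ must dominate. This works because the imaginary part is only linear in $k$, so discarding it costs nothing, and the bound is uniform in $k$ with constants independent of $g$. One could alternatively invoke~\eqref{eq:contour-shift}, but the direct estimate from~\eqref{eq:rho-alpha} is simpler.
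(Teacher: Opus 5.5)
Your proposal is correct and follows essentially the paper's route: a two-sided comparison of $\lvert\rho_\alpha\rvert$ with $1+k^2$, combined with the Fourier characterization of $H^2(\R)$. The only difference is in the lower bound. You derive a global bound $\lvert\rho_\alpha(k)\rvert\ge\tfrac12\sigma^2k^2-\lvert c_\alpha\rvert$ from the real part and absorb the low frequencies into the $L^2$ term. The paper instead uses an asymptotic bound for $\lvert k\rvert\ge R$ and treats $\lvert k\rvert<R$ separately by boundedness.
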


\begin{proof}
Since $\rho_\alpha$ is a quadratic polynomial with leading coefficient
$-\frac{1}{2}\sigma^2\neq 0$, we have $\lvert\rho_\alpha(k)\rvert\sim\frac{1}{2}\sigma^2 k^2$
as $\lvert k\rvert\to\infty$. Consequently, there is a constant $C>0$ with
\[
  \lvert\rho_\alpha(k)\rvert\le C(1+k^2)\quad\text{for all }k\in\R,
\]
and constants $c,R>0$ with
\[
  \lvert\rho_\alpha(k)\rvert\ge c(1+k^2)\quad\text{for }\lvert k\rvert\ge R.
\]
For $g\in L^2(\R)$, the bounds give 
\[
  \rho_\alpha g\in L^2\bigl(\{\lvert k\rvert\ge R\}\bigr)
  \iff
  (1+k^2)g\in L^2\bigl(\{\lvert k\rvert\ge R\}\bigr).
\]
Since both $\rho_\alpha$ and $1+k^2$ are bounded on $\{\lvert k\rvert<R\}$, $\rho_\alpha g$, $(1+k^2)g \in L^2\bigl(\{\lvert k\rvert<R\}\bigr)$. Therefore, $\rho_\alpha g\in L^2(\R)$ if and only if $(1+k^2)g\in L^2(\R)$, i.e., 
\[\mathcal D(T_{\rho_\alpha})=\{g\in L^2(\R):(1+k^2)g\in L^2(\R)\}=\mathcal F\bigl(H^2(\R)\bigr). \qedhere \]
\end{proof}
Consider
\begin{equation}\label{eq:bs-alpha}
  A^{(\alpha)} \coloneqq Q_\alpha^{-1}\mathcal F^{-1}T_{\rho_\alpha}\mathcal F\,Q_\alpha, \quad \mathcal{D}(A^{(\alpha)}) = Q_\alpha^{-1}\mathcal{F}^{-1} (\mathcal{D}(T_{\rho_\alpha})) = Q_\alpha^{-1}H^2(\R).
\end{equation}
Since $T_{\rho_\alpha}$ is closed and densely defined, $A^{(\alpha)}$ is a closed, densely defined operator. Notice that $C_c^\infty\bigl((0,\infty)\bigr) \subseteq \mathcal D(A^{(\alpha)})$ since, for $f\in C_c^\infty\bigl((0,\infty)\bigr)$, $\mathcal F Q_\alpha f$ is a Schwartz function and hence $\rho_\alpha\,\mathcal F Q_\alpha f\in L^2(\R)$. In this case, 
\[
  A^{(\alpha)}f
  = Q_\alpha^{-1}\mathcal F^{-1}\bigl(\rho_\alpha\,\mathcal F Q_\alpha f\bigr)
  = Q_\alpha^{-1}\mathcal F^{-1}\mathcal F\bigl(B_\alpha Q_\alpha f\bigr)
  = Q_\alpha^{-1}\bigl(B_\alpha Q_\alpha f\bigr)
  = A_{\mathrm{BS}}f .
\]

For $\alpha\in\R$, let the \emph{weighted Mellin transform} be $\mathcal M_\alpha \coloneqq \mathcal FQ_\alpha:
\mathcal H_\alpha\to L^2(\R,dk)$. For
$f\in C_c^\infty(0,\infty)$, the change of variables $S=e^x$ in \eqref{eq:fourier-conv} gives 
\[
(\mathcal M_\alpha f)(k)
=
\frac{1}{\sqrt{2\pi}}
\int_0^\infty S^{\alpha-ik-1}f(S)\,dS, \quad k\in\R.
\]

Since $Q_\alpha$ and $\mathcal F$ are unitary, $\mathcal M_\alpha$ is unitary. The nomenclature is due to its relationship to the Mellin transform \cite[Theorem~71, pp.~94--95]{Titchmarsh1986}, which is the multiplicative analogue of the Fourier transform: Recall that for $f\in C_c^\infty(0,\infty)$ the Mellin transform is
\[
  (\mathcal{M} f)(z)
  =\frac{1}{\sqrt{2\pi}}\int_0^\infty S^{z-1}f(S)\,dS, \quad z\in\C,
\]
so $(\mathcal M_\alpha f)(k)=(\mathcal{M} f)(\alpha-ik)$. In other words, $\mathcal M_\alpha$ is the Mellin transform $\mathcal{M}$ along the line $\Re z=\alpha$.

By definition of $\mathcal M_\alpha$,
\begin{equation}\label{eq:Aalpha-mult}
  A^{(\alpha)} = Q_\alpha^{-1}\mathcal F^{-1}T_{\rho_\alpha}\mathcal F Q_\alpha
  = \mathcal M_\alpha^{*}T_{\rho_\alpha}\mathcal M_\alpha,
\end{equation}
Since the polynomial $\rho_\alpha$ has finitely many real zeros, $\lvert\rho_\alpha\rvert>0$ almost everywhere. The function $\rho_\alpha/\lvert\rho_\alpha\rvert$ is well-defined almost everywhere, and we define it to be $1$ on $\{\rho_\alpha=0\}$. Substituting
$T_{\rho_\alpha}=T_{\rho_\alpha/\lvert\rho_\alpha\rvert}T_{\lvert\rho_\alpha\rvert}$, and substituting into~\eqref{eq:Aalpha-mult}, yields the singular value decomposition
\begin{equation}\label{eq:mellin-svd}
  A^{(\alpha)} = U_\alpha T_{\lvert\rho_\alpha\rvert} V_\alpha^{*},
  \qquad \text{where }
  U_\alpha = \mathcal M_\alpha^{*} T_{\rho_\alpha/\lvert\rho_\alpha\rvert} \text{ and } V_\alpha^{*} = \mathcal M_\alpha.
\end{equation}
Since $T_{\rho_\alpha/\lvert\rho_\alpha\rvert}$ is unitary, $U_\alpha$ is unitary as composition of unitary operators. 

Consider the undamped case of $\alpha=0$, $\mathcal H_0= L^2\bigl ((0,\infty), dS/S\bigr)$. In this case, the risk-free rate is the smallest singular value

\begin{proposition}[The risk-free rate is the smallest singular value]\label{prop:risk-free}
The operator $A^{(0)}: \mathcal{D}(A^{(0)}) \to \mathcal{H}_0 $ is a bijection with $\mathsf{\Sigma}\bigl(A^{(0)}\bigr) = [r,\infty)$ and $\bigl\lVert\bigl(A^{(0)}\bigr)^{-1}\bigr\rVert_{\mathcal L(\mathcal H_0)} = \frac1r$.
\end{proposition}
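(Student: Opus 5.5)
The plan is to read everything off the singular value decomposition \eqref{eq:mellin-svd} with $\alpha=0$, namely $A^{(0)} = U_0 T_{\lvert\rho_0\rvert}\mathcal M_0$, where $U_0$ and $\mathcal M_0$ are unitary. Because both singular operators are unitary, $\mathsf{\Sigma}(A^{(0)})$ is the essential range of $\lvert\rho_0\rvert$ on $\R$. We also have $\mathcal N(A^{(0)})=\{0\}$ exactly when $\lvert\rho_0\rvert>0$ almost everywhere. Bijectivity and the norm of the inverse then reduce to properties of the multiplication operator $T_{\lvert\rho_0\rvert}$.

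The first step is an explicit computation of $\lvert\rho_0(k)\rvert^2$ from \eqref{eq:contour-shift}. The real part of $\rho_0(k)$ is $-\tfrac12\sigma^2k^2-r$ and the imaginary part is $(r-\tfrac12\sigma^2)k$, so
\[
\lvert\rho_0(k)\rvert^2=\Bigl(\tfrac12\sigma^2k^2+r\Bigr)^2+\Bigl(r-\tfrac12\sigma^2\Bigr)^2k^2 .
\]
Both terms are even, continuous, and nondecreasing in $\lvert k\rvert$. The function equals $r^2$ at $k=0$ and tends to $\infty$ as $\lvert k\rvert\to\infty$. By continuity and the intermediate value theorem, the range of $\lvert\rho_0\rvert$ is exactly $[r,\infty)$. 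Since $\lvert\rho_0\rvert$ is continuous and Lebesgue measure charges every open interval, this range is also its essential range. In particular $\lvert\rho_0\rvert\ge r>0$ everywhere, so the kernel is trivial and case (ii) of the definition of singular values does not add $0$. Hence $\mathsf{\Sigma}(A^{(0)})=[r,\infty)$.

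For bijectivity, I would note that $T_{\lvert\rho_0\rvert}$ maps $\mathcal D(T_{\lvert\rho_0\rvert})$ bijectively onto $L^2(\R)$. Its inverse is $T_{1/\lvert\rho_0\rvert}$, which is bounded because $1/\lvert\rho_0\rvert\le 1/r$, and it maps $L^2$ into $\mathcal D(T_{\lvert\rho_0\rvert})$. By \eqref{eq:bs-alpha}, the domain of $A^{(0)}$ is $\mathcal M_0^{*}\mathcal D(T_{\rho_0})$, and $\mathcal D(T_{\rho_0})=\mathcal D(T_{\lvert\rho_0\rvert})$. Combining this with the unitarity of $U_0$ and $\mathcal M_0$, $A^{(0)}$ is a bijection from $\mathcal D(A^{(0)})$ onto $\mathcal H_0$, with inverse $\mathcal M_0^{*}T_{1/\lvert\rho_0\rvert}U_0^{*}$. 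This agrees with the Moore--Penrose formula of Proposition~\ref{prs:mp-inv}.

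Its norm equals $\lVert T_{1/\lvert\rho_0\rvert}\rVert=\operatorname{ess\,sup}1/\lvert\rho_0\rvert=1/\operatorname{ess\,inf}\lvert\rho_0\rvert=1/r$. The infimum value $r$ is approached on neighborhoods of $k=0$ of positive measure, by continuity, so the bound is attained in the supremum sense.

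The only delicate point is the monotonicity/minimum claim. I expect it to be straightforward once $\lvert\rho_0\rvert^2$ is written out, because the cross term $2\cdot\tfrac12\sigma^2k^2\cdot r$ is nonnegative, which is where $r>0$ is used. Everything else is bookkeeping with unitary conjugation.
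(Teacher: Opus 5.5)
Your proposal is correct and follows essentially the paper's route. Like the paper, you compute $\lvert\rho_0(k)\rvert^2$ from \eqref{eq:contour-shift}, show it increases continuously from $r^2$ at $k=0$ to $\infty$, and read off both $\mathsf{\Sigma}(A^{(0)})=[r,\infty)$ and $\lVert T_{1/\lvert\rho_0\rvert}\rVert=1/r$. The differences are minor: the paper gets monotonicity from the factorization $\lvert\rho_0(k)\rvert^2=(k^2+1)\bigl(\tfrac14\sigma^4k^2+r^2\bigr)$ rather than term by term, and your explicit check that $T_{\lvert\rho_0\rvert}$ maps its domain onto $L^2(\R)$ supplies the bijectivity argument the paper leaves implicit.
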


\begin{proof}
By~\eqref{eq:contour-shift},
\begin{equation}\label{eq:fact}
      \lvert\rho_0(k)\rvert^2
  = \bigl(\frac{1}{2}\sigma^2 k^2+r\bigr)^2 + \bigl(r-\frac{1}{2}\sigma^2\bigr)^2k^2
  = (k^2+1)\Bigl(\frac{1}{4}\sigma^4 k^2 + r^2\Bigr),
\end{equation}
the last equality being an elementary identity in $k$. Since both $(k^2+1)$ and $(\frac{1}{4}\sigma^4 k^2+r^2)$ are continuous, strictly
increasing in $k^2$, and take the values $1$ and $r^2$ at $k=0$, $\lvert\rho_0(k)\rvert^2$ increases continuously from $r^2$ at $k=0$ to $+\infty$ as
$\lvert k\rvert\to\infty$. Therefore, $\lvert\rho_0(k)\rvert^2$ is a continuous surjection of $\R$ onto $[r^2,\infty)$, and its essential range is $[r^2,\infty)$, so $\mathsf{\Sigma}\bigl(A^{(0)}\bigr) = [r,\infty)$. Since $\lvert\rho_0(k)\rvert \ge r>0$ for all $k$,  $\lVert T_{1/\lvert\rho_0\rvert}\rVert=\sup_k \lvert\rho_0(k)\rvert^{-1}
=\bigl(\inf_k \lvert\rho_0(k)\rvert\bigr)^{-1}=1/r$, giving the operator norm of $(A^{(0)})^{-1}$.
\end{proof}

\section{Conclusion}

To the best of our knowledge, this article provides the first systematic study of the singular value decomposition of unbounded operators. The examples throughout the article demonstrate that the singular value decomposition is a useful tool across analysis, geometry, numerical computation, physics, statistics, mathematical finance, and beyond. We hope that the work in this article lays the groundwork for further applications of singular value decomposition to unbounded operators, just as it has long done for matrices and compact operators.

\subsection*{Acknowledgments} This work is partially supported by the Vannevar Bush Faculty Fellowship ONR N000142312863. 

RTW would like to thank Alan Edelman for pointing us towards special functions, resulting in the section on Sturm--Liouville operators.
\bibliographystyle{abbrv}
\bibliography{unbounded}

\end{document}